\newtheorem{theorem}{Theorem}[section]
\newtheorem{proposition}[theorem]{Proposition}
\newtheorem{lemma}[theorem]{Lemma}
\newtheorem{corollary}[theorem]{Corollary}
\newtheorem{conjecture}[theorem]{Conjecture}
\theoremstyle{definition}
\newtheorem{definition}[theorem]{Definition}
\newtheorem{example}[theorem]{Example}
\newtheorem{problem}[theorem]{Problem}
\theoremstyle{remark}
\newtheorem{remark}[theorem]{Remark}
\numberwithin{equation}{section}
\def\DJ{{\hbox{D\kern-.8em\raise.15ex\hbox{--}\kern.35em}}}
\def\DJo{\DJ okovi\'c}
\def\DZD{D.\v{Z}. \DJo}
\def\pd{{\partial}}
\def\bC{{\bf C}}
\def\bR{{\bf R}}
\def\bH{{\bf H}}
\def\bZ{{\bf Z}}
\def\pA{{\mathcal A}}
\def\pF{{\mathcal F}}
\def\pH{{\mathcal H}}
\def\pJ{{\mathcal J}}
\def\pK{{\mathcal K}}
\def\pO{{\mathcal O}}
\def\pP{{\mathcal P}}
\def\pU{{\mathcal U}}
\def\pX{{\mathcal X}}
\def\diag{{\rm diag}}
\def\sgn{{\rm sgn}}
\def\GL{{\mbox{\rm GL}}}
\def\SO{{\mbox{\rm SO}}}
\def\Sp{{\mbox{\rm Sp}}}
\def\Un{{\mbox{\rm U}}}
\def\SU{{\mbox{\rm SU}}}
\def\CT{{\mbox{\rm CT}}}
\def\la{{\langle}}
\def\ra{{\rangle}}
\def\NE{{\rm NE}}
\def\NW{{\rm NW}}
\def\SW{{\rm SW}}
\def\SE{{\rm SE}}
\def\sgn{{\rm sgn}}
\def\tr{{\,\rm tr\,}}
\def\de{\delta}
\def\vf{\varphi}
\def\sig{\sigma}
\font\germ=eufm10
\def\t{{\mbox{\germ t}}}
\def\u{{\mbox{\germ u}}}
\begin{document}

\title[Zero patterns and unitary similarity]
{Zero patterns and unitary similarity}

\author[J. An]{Jinpeng An$^{1,2}$}

\author[D.\v{Z}. \DJo{}]{Dragomir \v{Z}. \DJo{}$^1$}
\thanks
{The second author was supported in part by an NSERC Discovery Grant.}

\address{1. Department of Pure Mathematics, University of Waterloo,
Waterloo, Ontario, N2L 3G1, Canada}

\address{2. LMAM, School of Mathematical Sciences, Peking University, Beijing, 100871, China}

\email{anjinpeng@gmail.com}
\email{djokovic@uwaterloo.ca}

\begin{abstract}
A subspace of the space, $L(n)$, of traceless complex $n\times n$ matrices
can be specified by requiring that the entries at some positions $(i,j)$
be zero. The set, $I$, of these positions is a (zero) pattern and the
corresponding subspace of $L(n)$ is denoted by $L_I(n)$. A pattern $I$
is universal if every matrix in $L(n)$ is unitarily similar to some matrix
in $L_I(n)$. The problem of describing the universal patterns is raised,
solved in full for $n\le3$, and partial results obtained for $n=4$.
Two infinite families of universal patterns are constructed.
They give two analogues of Schur's triangularization theorem.
\end{abstract}

\date{}

\subjclass[2000]{15A21, 14L35.}

\keywords{Unitary similarity, zero patterns, analogues of Schur's
triangularization theorem, flag variety.}

\maketitle

\section{Introduction}

This paper is a sequel to our paper \cite{AD} where we studied the
universal subspaces $V$ for the representation of a connected compact Lie
group $G$ on a finite-dimensional real vector space $U$. The meaning
of the word ``universal'' in this context is that every $G$-orbit
in $U$ meets the subspace $V$. The general results obtained in
that paper have been applied in particular to the conjugation actions
$A\to XAX^{-1}$, $X\in G$, of the classical compact Lie groups $G$,
i.e., $\Un(n)$, $\SO(n)$ and $\Sp(n)$, on
the space of $n\times n$ matrices $M(n,\bC)$, $M(n,\bR)$ and
$M(n,\bH)$, respectively. (By $\bH$ we denote the algebra of real
quaternions.)

In the present paper we restrict our scope to the
complex case, i.e., to $M(n)=M(n,\bC)$ and $G=\Un(n)$.
However, all results where we establish the nonsingularity (see Section
\ref{Nonsing-Obl} for the definition) of certain patterns are directly
applicable to the real and quaternionic cases.
Throughout the paper we denote by $L(n)\subseteq M(n)$ the subspace of
traceless matrices, and by $T_n\subseteq\Un(n)$ the maximal
torus consisting of the diagonal matrices. We shall
consider only a very special class of complex subspaces of $M(n)$;
those that can be specified by requiring
that the matrix entries in specified positions $(i,j)$ vanish.
We denote the set of these positions $(i,j)$ by $I$ and
denote  by $M_I(n)$ the corresponding subspace. We also set
$L_I(n)=L(n) \cap M_I(n)$. We refer to
$I$ as a (zero) pattern and denote the set of all such $I$'s by $\pP_n$.
A pattern is strict if it contains no diagonal positions.
It is proper if it does not contain all the diagonal positions.
We say that a pattern $I\in\pP_n$ is universal if the subspace $L_I(n)$
is universal in $L(n)$. We point out that, for a strict pattern
$I\in\pP_n$, $L_I(n)$ is universal in $L(n)$
iff $M_I(n)$ is universal in $M(n)$.

The main question we consider, the universality problem, is to
determine all universal patterns in $\pP_n$. In full generality,
this problem is solved only for $n\le3$.
There is a simple necessary condition for universality of a
proper pattern $I$: $|I|\le\mu_n=n(n-1)/2$ (see Proposition
\ref{RestrDim} below). We denote by $\pP'_n$ the set of strict
patterns $I\in\pP_n$ with $|I|=\mu_n$.
Theorem 5.1 of \cite{AD} provides a sufficient condition for
the universality of a pattern (see Theorem \ref{TeorAD} below).
We use this result to construct some infinite families of strict
universal patterns. The main results in this direction are
Theorems \ref{SchurAnalog} and \ref{Fam-J}.

In Section \ref{UnivObl} we define the universal patterns and state the
universality problem for patterns $I\in\pP_n$. The case $n=2$ is easy:
All patterns $I\in\pP_2$ of size 1 are universal. The nonsingularity
of all $I\in\pP'_3$ has been established in \cite{AD}. In Proposition
\ref{3-Obl} we show that none of the proper nonstrict patterns $I\in\pP_3$ of
size 3 is universal. Thus the universality problem is solved for $n\le3$.
A proper pattern $I\in\pP_n$ is $n$-defective if the stabilizer of
$L_I(n)$ in $\Un(n)$ has dimension larger than $n^2-2|I|$. Such patterns
are not universal.

In Section \ref{Nonsing-Obl} we introduce the nonsingular patterns.
We say that $I\in\pP_n$ is $n$-nonsingular if $\chi_I\notin\pK(n)$,
where $\chi_I\in\bR[x_1,\ldots,x_n]$ is the product of all differences
$x_i-x_j$, $(i,j)\in I$, and $\pK(n)$ is the ideal generated
by the nontrivial elementary symmetric functions of the $x_i$'s. The
basic fact, that nonsingular patterns are universal, was proved in
\cite{AD}. We say that a pattern $I$ is simple if $(i,j)\in I$
implies that $(j,i)\notin I$. All simple patterns are nonsingular,
and so universal. A pattern $I\in\pP'_n$ is $n$-exceptional if it is
$n$-singular but not $n$-defective. There is no general method for
deciding whether an exceptional pattern $I\in\pP'_n$ is universal.
Proposition \ref{FundPol} provides a simple method for testing
whether a pattern $I\in\pP'_n$ is nonsingular. The inner product
$\la\cdot,\cdot\ra$ used in the proposition is defined in the
beginning of the section.

In Section \ref{Ekv} we introduce two equivalence relations ``$\approx$''
and ``$\sim$'' in $\pP'_n$. We refer to the former simply as
``equivalence'' and to the latter as ``weak equivalence''.
This is justified since $I\approx I'$ implies $I\sim I'$.
If $I\approx I'$ then $I$ is universal iff $I'$ is universal, but
we do not know if this also holds for weak equivalence.
However, if $I\sim I'$ then $I$ is nonsingular iff $I'$ is nonsingular.
For any pattern $I$ we define its complexity $\nu(I)$ as the number
of positions $(i,j)$ with $i\le j$ such that both $(i,j)$ and $(j,i)$
belong to $I$. The patterns of complexity 0 are exactly the simple patterns.
We show that for $n\ge4$ the set of patterns of complexity 1 in
$\pP'_n$ splits into two weak equivalence classes. One of these
classes is singular and the other nonsingular.

In Section \ref{Analog} we consider a particular sequence of
nonsingular patterns $\Lambda_n\in\pP'_n$, $n\ge1$, of maximal complexity,
i.e., with $\nu(\Lambda_n)=[\mu_n/2]$. For convenience let us write
$n=4m+r$ where $m,r\ge0$ are integers and $r<4$. The pattern
$\Lambda_n$ consists of all positions $(i,j)$ with $i\ne j$ and
$i+j\le n+1$, except those of the form $(2i-1,n-2i+1)$ and
$(n-2i+1,2i-1)$ with $1\le i\le m$, and if $r=2$ or 3 we also
omit the position $(2m+2,2m+1)$. As $\Lambda_n$ is nonsingular,
it is also universal, i.e., every matrix $A\in M(n)$ is unitarily
similar to one in the subspace $M_{\Lambda_n}(n)$. Thus we can
view this result as an analogue of Schur's theorem. The whole
section is dedicated to the proof of this result.

In Section \ref{Analogue} we consider an infinite family of patterns
$J(\sig,{\bf i})$ depending on an integer $n\ge1$, a permutation
$\sig\in S_n$ and a sequence ${\bf i}=(i_1,i_2,\ldots,i_{n-1})$ of
distinct integers. This sequence has to be chosen so that,
for each $k$, $|i_k|\in\{\sig(1),\sig(2),\ldots,\sig(k)\}$.
The pattern $J(\sig,{\bf i})$ consists of all positions
$(i_k,\sig(j))$ with $i_k>0$ and $(\sig(j),-i_k)$ with $i_k<0$,
where in both cases $1\le k<j\le n$. The main result of this
section (Theorem \ref{Fam-J}) shows that all the patterns
$J(\sig,{\bf i})$ are nonsingular. As a special case, we obtain
another analogue of Schur's theorem (see Proposition \ref{Primer-2}).

In Section \ref{Cetiri} we consider the exceptional patterns $I\in\pP'_4$.
Up to equivalence, there are seven of them (see Table 2). We prove
that the first two of them are not universal while the third one is.
This is the unique example that we have of a strict pattern which is
singular and universal. For the remaining four patterns in Table 2
the universality question remains open. The same question for the
nonstrict patterns in $\pP_4$ remains wide open.

There are other interesting questions that one can raise about
the subspaces $L_I(n)$ and the unitary orbits
$\pO_A=\{UAU^{-1}:\, U\in\Un(n) \}$, $A\in L(n)$.
For instance, if $L_I(n)$ is not universal
we can ask for the characterization of the set $\Un(n)\cdot L_I(n)$.
Another question of interest is to determine the number, $N_{A,I}$,
of $T_n$-orbits contained in the intersection $\pX_A=L_I(n)\cap\pO_A$.
For instance, if $L_I(n)$ is the space of upper (or lower) triangular
traceless matrices and $A\in L(n)$ has $n$ distinct eigenvalues
then $N_{A,I}=n!$.

In Section \ref{RedFlag} we consider a pattern $I\in\pP'_n$ and a matrix
$A\in L(n)$ such that $\pX_A\ne\emptyset$. The homogeneous space
$\pF_n=\Un(n)/T_n$ is known as the flag manifold and we refer to
its points as flags. If $g^{-1}Ag\in L_I(n)$,
$g\in\Un(n)$, we say that the flag $gT_n$ reduces $A$ to $L_I(n)$.
We say that $A$ is generic if $\pX_A$ and $L_I(n)$ intersect
transversally (see the next section for the definition).
For generic $A$, we show that $N_{A,I}$ is equal to the number of
flags which reduce $A$ to $L_I(n)$.

In Section \ref{CikObl} we consider the case $n=3$ and the cyclic pattern
$I=\{(1,3),(2,1),(3,2)\}$. We know that $I$ is
nonsingular and so $\Un(3)\cdot L_I(3)=L(3)$. We show that the set
$\Theta$ of nongeneric matrices $A\in L(3)$ is contained in
a hypersurface $\Gamma$ defined by $P=0$, where $P$ is a homogeneous
$\Un(3)$-invariant polynomial of degree 24. This polynomial is
explicitly computed and we show that it is absolutely irreducible.
The restriction, $P_I$, of $P$ to $L_I$ factorizes as
$P_I=P_1^2P_2$, where $P_1$ and $P_2$ are absolutely irreducible
homogeneous polynomials of degree 6 and 12, respectively. Thus,
$\Gamma \cap L_I=\Gamma_1 \cup \Gamma_2$ where $\Gamma_i\subset L_I$
is the hypersurface defined by $P_i=0$, $i=1,2$. The hypersurface
$\Gamma_1$ consists of all matrices $A\in L(3)$ such that $\pO_A$
and $L_I$ meet non-transversally at $A$.

We propose a conjecture (see Section \ref{Nonsing-Obl}) and
several open problems.

For any positive integer $n$ we set $\bZ_n=\{1,2,\ldots,n\}$ and
$\mu_n=n(n-1)/2$.
If $\pA$ is a $\bZ$-graded algebra, we denote by $\pA_d$ the
homogeneous component of $\pA$ of degree $d$. We use the same
notation for the homogeneous ideals of $\pA$.

We thank the referee for his suggestions regarding the
presentation and some minor corrections.

\section{Universal patterns} \label{UnivObl}

Let $M(n)$ denote the algebra of complex $n\times n$ matrices
and $L(n)$ its subspace of matrices of trace 0.
We are interested in subspaces of $M(n)$ or $L(n)$ which can be
specified by zero patterns. For that purpose we introduce
the notion of patterns.

A {\em position} is an ordered pair $(i,j)$ of positive integers.
We say that a position $(i,j)$ is {\em diagonal} if $i=j$.
A {\em pattern} is a finite set of positions.
A pattern is {\em strict} if it has no diagonal positions.
The {\em size} of a pattern $I$ is its cardinality, $|I|$.
If $I,I'$ are patterns and $I\subseteq I'$ then we say
that $I'$ is an {\em extension} of $I$.
We denote by $\pP$ the set of all patterns and by $\pP_n$ the set
of patterns contained in $\bZ_n\times\bZ_n$.
We denote by $\pP'_n$ the subset of $\pP_n$ consisting
of the strict patterns of size $\mu_n$.
The ``$n\times n$ zero patterns'' used in our paper \cite{AD}
are the same as the patterns in $\pP'_n$.

We define an involutory map $T:\pP\to\pP$, called {\em transposition},
by setting $I^T=\{(j,i):\, (i,j)\in I\}$ for $I\in\pP$.
We refer to $I^T$ as the {\em transpose} of $I$. We say that $I$ is
{\em symmetric} if $I^T=I$. The sets
$\pP_n$ and $\pP'_n$ are $T$-invariant for all $n$.
Note that if $I\in\pP_n$ is universal (or nonsingular)
then $I^T$ has the same property.

For $I\in\pP_n$, we denote by $M_I(n)$, or just $M_I$ if $n$ is fixed,
the subspace of $M(n)$ which consists of all matrices $X=[x_{ij}]$
such that $x_{ij}=0$ for all $(i,j)\in I$. We also set
$L_I(n)=L(n) \cap M_I(n)$.

Some important patterns in $\pP_n$ are the diagonal pattern
$\Delta_n=\{(i,i):i\in\bZ_n\}$ and the four triangular patterns:
\begin{eqnarray*}
 \NE_n &=& \{(i,j)\in\bZ_n\times\bZ_n: i<j \}, \\
 \SW_n &=& \{(i,j)\in\bZ_n\times\bZ_n: i>j \}, \\
 \NW_n &=& \{(i,j)\in\bZ_n\times\bZ_n: i+j<n+1 \}, \\
 \SE_n &=& \{(i,j)\in\bZ_n\times\bZ_n: i+j>n+1 \}.
\end{eqnarray*}
The first is the {\em upper triangular} and the second
the {\em lower triangular} pattern. Note that, according to this terminology,
if $I$ is the upper triangular pattern, then $M_I$ is the space of lower
triangular matrices.

The unitary group $\Un(n)$ acts on $L(n)$ by conjugation, i.e.,
unitary similarities.

\begin{definition} We say that a real subspace $V\subseteq L(n)$
is {\em universal} in $L(n)$ if every matrix in $L(n)$ is unitarily
similar to a matrix in $V$. We also say that a pattern $I\in\pP_n$
is {\em $n$-universal} if the subspace $L_I(n)$ is universal
in $L(n)$.
\end{definition}

The prefix ``$n$-'' will be supressed if $n$ is clear from
the context. This convention shall apply to several other
definitions that we will introduce later.

It is obvious that a strict $n$-universal pattern
is also $m$-universal for all $m>n$.
The converse is not valid, e.g., the pattern $\{(1,2),(2,1)\}$ is
3-universal but not $2$-universal.

We are interested in the (pattern) universality problem, i.e.,
the problem of deciding which patterns $I\in\pP_n$ are universal.
It is easy to see that, for a strict pattern $I\in\pP_n$, the
subspace $L_I(n)$ is universal in $L(n)$ iff the subspace
$M_I(n)$ is universal in $M(n)$. The Schur's triangularization
theorem asserts that the triangular patterns $\NE_n$ and
$\SW_n$ are $n$-universal.

It is well known that $\Delta_n$ is $n$-universal \cite[Theorem 1.3.4]{HJ}
for all $n$. However, if $i,j\in\bZ_n$ and $i\ne j$ then the next example
implies that the pattern $\Delta_n \cup \{(i,j)\}$ is not universal.

\begin{example} \label{dij+1}
The pattern $I=\{(1,1),(1,2),(2,2)\}$ is not $n$-universal for $n\ge2$.
Let $D=\diag(1,\ldots,1,1-n)\in L(n)$ and let $A\in L_I(n)$. As the rank
of $D-{\rm id}$ is 1 and that of $A-{\rm id}$ is at least 2,
$D$ and $A$ are not similar. Consequently, $L_I$ is not universal.
This implies that $\NW_n$ is not $n$-universal for $n\ge4$. For the case $n=3$
see Proposition \ref{3-Obl}.
\end{example}

We give another example of a nonuniversal nonstrict pattern.
\begin{example} \label{drugi-neuni}
The pattern $J=\{(i,1):1\le i<n\} \cup \{(1,n)\}$ is not $n$-universal for $n\ge2$.
Let $D$ be as above and let $A=[a_{ij}]\in L_I(n)$.
Assume that $D$ and $A$ are similar. Since $D-{\rm id}$ has rank 1,
$A-{\rm id}$ must also have rank 1. Thus
$$
\left| \begin{array}{cc} -1&0\\ a_{n1} & a_{nn}-1 \end{array} \right|=0 \quad
\text{ and } \quad
\left| \begin{array}{cc} -1 & a_{1j} \\ 0 & a_{jj}-1 \end{array} \right|=0
\text{ for } 1<j<n, $$
i.e., $a_{jj}=1$ for $1<j\le n$. As $A\in L_J$, we have $a_{11}=0$ contradicting
the fact that $\tr(A)=0$. Consequently, $L_I$ is not universal.
\end{example}

If $n-1$ of the diagonal entries of a matrix $A\in L(n)$ vanish
then all $n$ of them vanish. For that reason we introduce the
following definition: We say that a pattern $I\in\pP_n$ is
$n$-{\em proper} if $(i,i)\notin I$ for at least one $i\in\bZ_n$.
Observe that, for any pattern $I\in\pP_n$, the subspace $L_I(n)$ is
stabilized by the maximal torus $T_n$. An easy dimension argument
shows that the following is valid, see \cite[Lemma 4.1]{AD}.

\begin{proposition} \label{RestrDim}
Let $I\in\pP_n$ be proper and universal. Then the dimension of the
stabilizer of $L_I(n)$ in $\Un(n)$ does not exceed $n^2-2|I|$.
In particular, $n\le n^2-2|I|$, i.e., $|I|\le\mu_n$.
\end{proposition}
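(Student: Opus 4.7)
The plan is a routine orbit-dimension count. Let $K \subseteq \Un(n)$ denote the stabilizer of the subspace $L_I(n)$ under conjugation, i.e. $K = \{U \in \Un(n) : UL_I(n)U^{-1} = L_I(n)\}$; the first assertion of the proposition is a bound on $\dim K$.

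First I would compute $\dim_\bR L_I(n)$. Because $I$ is proper, some diagonal position $(i_0,i_0)$ is absent from $I$, so the equation $\tr X = 0$ is linearly independent of the conditions $x_{ij}=0$, $(i,j)\in I$ (the coordinate $x_{i_0i_0}$ appears in the trace but in no condition from $I$). Thus $\dim_\bC L_I(n) = n^2 - |I| - 1$, and
\[
\dim_\bR L_I(n) = 2(n^2 - |I| - 1).
\]

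Next I would form the associated bundle $E = \Un(n) \times_K L_I(n)$, where $K$ acts on $\Un(n)$ by right translation and on $L_I(n)$ by conjugation. The conjugation map $(U,A)\mapsto UAU^{-1}$ factors through $E$ as a smooth $\bar\phi: E \to L(n)$ whose image is $\Un(n)\cdot L_I(n)$. Universality of $I$ forces $\bar\phi$ to be surjective, so $\dim E \ge \dim_\bR L(n) = 2n^2-2$. Substituting
\[
\dim E = \dim\Un(n) + \dim_\bR L_I(n) - \dim K = n^2 + 2(n^2-|I|-1) - \dim K
\]
and rearranging yields the stated bound $\dim K \le n^2 - 2|I|$.

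For the ``in particular'' clause, note that $T_n \subseteq K$: conjugation by a diagonal unitary multiplies the $(i,j)$-entry by a scalar of modulus one, hence preserves every pattern-defined subspace. Consequently $\dim K \ge \dim T_n = n$, which combined with the previous bound gives $n \le n^2 - 2|I|$, i.e. $|I| \le \mu_n$. I do not expect any real obstacle; the only points that need care are the correct use of the properness hypothesis in computing $\dim L_I(n)$ and keeping real versus complex dimensions consistent throughout.
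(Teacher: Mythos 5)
Your proof is correct, and it is precisely the "easy dimension argument" the paper alludes to (the paper simply cites Lemma 4.1 of \cite{AD} rather than spelling it out): form the fibered product $\Un(n)\times_K L_I(n)$, observe that universality makes the natural map to $L(n)$ surjective, and conclude by Sard's theorem that the source has at least the dimension of the target. Your computation of $\dim_\bR L_I(n) = 2(n^2-|I|-1)$ (using properness to ensure the trace functional is nonzero on $M_I(n)$), and the observation that $T_n\subseteq K$ yields $\dim K\ge n$, are exactly the points the paper flags as relevant, so your write-up matches the intended argument.
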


Thus we have a simple condition that any proper universal
pattern $I\in\pP_n$ must satisfy: $|I|\le\mu_n$.

We say that a proper pattern $I\in\pP_n$ is $n$-{\em defective} if
the dimension of the stabilizer of $L_I(n)$ in $\Un(n)$ is larger
than $n^2-2|I|$.
By the proposition, such patterns are not universal. Note that
any proper pattern $I\in\pP_n$ with $|I|>\mu_n$ is defective.

Next, we show that some special extensions of strict universal patterns
are also universal. For $I\in\pP$ and integers $m,n\ge0$ we denote by
$(m,n)+I$ the translate $\{(m+i,n+j):\,(i,j)\in I\}$ of $I$.

\begin{proposition} \label{UnivPros}
Let $I\in\pP_n$ and $J\in\pP_{m-n}$, $m>n$, be strict patterns and
assume that $I$ is $n$-universal and $J$ is $(m-n)$-universal.
Then the pattern
$$ I'=I \cup \left( (n,n)+J \right) \cup
\left( (0,n)+\bZ_n\times\bZ_{m-n} \right) $$
is $m$-universal.
\end{proposition}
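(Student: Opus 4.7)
The plan is to exploit the block structure that the pattern $I'$ imposes on $M(m)$. A quick check shows that the three components of $I'$ are pairwise disjoint and occupy, respectively, the upper-left $n\times n$ block, the lower-right $(m-n)\times(m-n)$ block, and the entire upper-right $n\times(m-n)$ block. Equivalently, $A \in M(m)$ lies in $M_{I'}(m)$ precisely when
$$ A = \begin{pmatrix} B & 0 \\ C & D \end{pmatrix}, \qquad B \in M_I(n),\ D \in M_J(m-n), $$
with $C$ arbitrary. Since $I$, $J$, and $I'$ are strict, universality of $L_K$ in $L(\cdot)$ is equivalent to universality of $M_K$ in $M(\cdot)$ in each case (as noted earlier in the paper), so the task reduces to conjugating an arbitrary $A \in M(m)$ into the above block form by a suitable $U \in \Un(m)$.

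I would proceed in two steps. First, zero out the upper-right block by invoking Schur's theorem: any unitary triangularization of $A$ yields an $A$-invariant subspace $W \subseteq \bC^m$ of dimension $m-n$, namely the span of the first $m-n$ columns of the triangularizing unitary. Ordering an orthonormal basis of $\bC^m$ as an orthonormal basis of $W^{\perp}$ followed by one of $W$ gives $U_0 \in \Un(m)$ with
$$ U_0^{-1} A U_0 = \begin{pmatrix} B_0 & 0 \\ C_0 & D_0 \end{pmatrix} $$
for some $B_0 \in M(n)$, $D_0 \in M(m-n)$. Second, apply the hypothesized universalities to obtain $V \in \Un(n)$ with $V^{-1} B_0 V \in M_I(n)$ and $X \in \Un(m-n)$ with $X^{-1} D_0 X \in M_J(m-n)$. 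Conjugating by $\diag(V,X) \in \Un(m)$ preserves the vanishing of the upper-right block and replaces the diagonal blocks by these desired images, so the matrix $\big(U_0\,\diag(V,X)\big)^{-1} A \, \big(U_0\,\diag(V,X)\big)$ lies in $M_{I'}(m)$.

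There is no serious obstacle in the argument; once the block description of $I'$ is unpacked, the proof is a short bookkeeping exercise combining Schur's theorem (to align an invariant subspace with the last $m-n$ coordinates) with the two given universalities applied separately to the diagonal blocks.
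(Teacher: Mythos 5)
Your proof is correct and takes essentially the same approach as the paper: the paper simply chooses $X\in\Un(m)$ so that $XAX^{-1}$ is lower triangular (which directly has the required block form with zero upper-right block) and then applies the two universalities to the diagonal blocks via a block-diagonal unitary $Y_1\oplus Y_2$; your invariant-subspace reordering is an equivalent way of reaching the same lower block-triangular form before the identical second step.
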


\begin{proof}
Given a matrix $A\in L(m)$, choose $X\in\Un(m)$ such that $B=XAX^{-1}$
is lower triangular. Let $B_1$ resp. $B_2$ denote the square submatrix
of $B$ of size $n$ resp. $m-n$ in the upper left resp. lower right
hand corner.
Since $I$ and $J$ are strict and universal, there exist matrices
$Y_1\in\Un(n)$ and $Y_2\in\Un(m-n)$ such that $Y_1B_1Y_1^{-1}\in M_I(n)$
and $Y_2B_2Y_2^{-1}\in M_J(m-n)$.
Thus if $Y=Y_1\oplus Y_2$, then $YBY^{-1}\in L_{I'}(m)$.
Hence $I'$ is $m$-universal.
\end{proof}

Let us fix a positive integer $n$ and a pattern $I\in\pP'_n$.
For $A\in L(n)$ we denote by $\pO_A$ the $\Un(n)$-orbit through $A$,
i.e., $\pO_A=\{UAU^{-1}:\, U\in\Un(n) \}$. The set
$\pX_A=L_I(n)\cap\pO_A$ is closed and $T_n$-invariant.
We say that $\pO_A$ intersects $L_I$ {\em transversally at a point}
$B\in\pX_A$ if the sum of $L_I(n)$ and the tangent space of $\pO_A$
at $B$ is equal to the whole space $L(n)$.
If this is true for all points $B\in\pX_A$, then we say that
$\pO_A$ and $L_I$ intersect {\em transversally},
and that the matrix $A$ and its orbit $\pO_A$ are {\em $I$-generic}.
We shall denote by $N_{A,I}$ the cardinality of the set $\pX_A/T_n$
(the set of $T_n$-orbits in $\pX_A$). We note that $N_{A,I}$
is finite if $A$ is $I$-generic, see Section \ref{RedFlag} and
\cite[Section 4]{AD}.

Almost nothing is known about the universality of the subspaces
$L_I(n)$ of $L(n)$ for nonstrict patterns $I$, but see the above examples.
The case $n=2$ is easy and we leave it to the reader. Let us analyze
the case $n=3$. The case of strict patterns, $\pP'_3$, has been handled
in \cite{AD}. It is easy to see that any pattern $I\in\pP_3$ of size 2 is
universal. By taking into account the above examples and the fact
that $\Delta_3$ is universal, there are only four cases to consider:
\begin{eqnarray*}
&&
\left[ \begin{array}{ccc} 0&0&*\\ 0&*&*\\ *&*&* \end{array} \right],
\quad
\left[ \begin{array}{ccc} 0&*&*\\ *&*&0\\ *&0&* \end{array} \right],
\quad
\left[ \begin{array}{ccc} 0&0&*\\ *&*&*\\ *&0&* \end{array} \right],
\quad
\left[ \begin{array}{ccc} 0&0&*\\ *&*&0\\ *&*&* \end{array} \right].
\end{eqnarray*}
(The starred entries are arbitrary, subject only to the
condition that the matrices must have zero trace.)

We shall prove that none of them is universal and thereby complete
the solution of the universality problem for $n=3$.

\begin{proposition} \label{3-Obl}
For $n=3$, no proper nonstrict pattern of size 3 is universal.
\end{proposition}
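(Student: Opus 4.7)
The plan is, for each of the four remaining patterns, to exhibit a single counterexample matrix $A \in L(3)$ that no matrix in $L_I(3)$ matches under unitary similarity. Labelling the patterns $I_1 = \{(1,1),(1,2),(2,1)\}$, $I_2 = \{(1,1),(2,3),(3,2)\}$, $I_3 = \{(1,1),(1,2),(3,2)\}$, and $I_4 = \{(1,1),(1,2),(2,3)\}$ in the order displayed, the first three will be handled by a uniform Frobenius-plus-AM-GM argument with $A = \diag(1,\omega,\omega^2)$, $\omega = e^{2\pi i/3}$, while $I_4$ requires a different counterexample together with a case analysis on normality.

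For $A = \diag(1,\omega,\omega^2)$ one has $\|A\|_F^2 = 3$ and $\det A = 1$, both unitarily invariant. Writing out a generic $B \in L_I(3)$: for $I_1$, $B = \left(\begin{smallmatrix}0&0&a\\0&b&c\\d&e&-b\end{smallmatrix}\right)$ satisfies $\det B = -abd$, so $|abd|=1$, and AM-GM gives $|a|^2+|b|^2+|d|^2 \ge 3|abd|^{2/3} = 3$; combined with $\|B\|_F^2 = (|a|^2+|b|^2+|d|^2)+(|b|^2+|c|^2+|e|^2) = 3$ this forces $b=c=e=0$, contradicting $abd \neq 0$. Pattern $I_3$ is symmetric with $(a,c,e)$ playing the role of $(a,b,d)$ and follows identically. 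For $I_2$, $\det B = d(ac-be)$; the bound $|ac-be| \le (|a|^2+|b|^2+|c|^2+|e|^2)/2 = (3-2|d|^2)/2$ combined with $|d|\,|ac-be|=1$ yields $3|d|-2|d|^3 \ge 2$, which has no nonnegative solution since $3t-2t^3$ attains its maximum $\sqrt 2 < 2$ at $t = 1/\sqrt 2$.

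For $I_4$ the counterexample $A = \diag(1,\omega,\omega^2)$ fails, because the cyclic permutation matrix of the $3$-cycle $(1\,3\,2)$ lies in $L_{I_4}$ and has spectrum $\{1,\omega,\omega^2\}$; in fact it is precisely the matrix attaining equality in the analogous Frobenius-plus-AM-GM bound for this pattern. I would therefore take $A = \diag(1, i, -1-i)$, which is still traceless and diagonal (hence normal), but whose eigenvalues have moduli $1,1,\sqrt 2$, not all equal. Writing $B = \left(\begin{smallmatrix}0&0&a\\b&c&0\\d&e&-c\end{smallmatrix}\right) \in L_{I_4}$, the identities $BB^* = B^*B$ expand to $|a|^2=|b|^2+|d|^2$, $|b|=|e|$, $\bar b c+\bar d e = 0$, $a\bar c=\bar d c$, and $b\bar d + 2c\bar e = 0$. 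If $c \neq 0$ then $a\bar c = \bar d c$ forces $|a|=|d|$, hence $b=e=0$, and the resulting spectrum condition on the two non-$c$ eigenvalues of $B$ demands $|d|^2 \in \{-1+i,\,-1-i,\,-1\}$ according to whether $c$ equals $1$, $i$, or $-1-i$, none of which is a nonnegative real. If $c = 0$ then $\bar d e = b\bar d = 0$ collapses $B$ either into a cyclic shape (all three eigenvalues then sharing modulus $|abe|^{1/3}$, contradicting the three moduli of $A$) or into a rank-$2$ form having $0$ among its eigenvalues, which is absent from the spectrum of $A$.

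The main obstacle is exactly $I_4$: the clean scalar inequality that sufficed for $I_1, I_2, I_3$ fails to be strict for $I_4$ because the cyclic permutation matrix attains equality, so one must both choose a more delicate counterexample $A$ (with distinct eigenvalue moduli) and replace the single AM-GM step by a case split on the normality relations. The computations for $I_1, I_2, I_3$ and for each of the subcases of $I_4$ are then routine once the right invariants are identified.
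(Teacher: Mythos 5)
Your proof is correct, and for the first three patterns it takes a genuinely different and more elementary route than the paper. Where the paper establishes non-universality of the subspaces labelled $1$ and $2$ by exhibiting a $\Un(3)$-invariant polynomial $P$ (rather elaborate in case $1$, with a {\sc Maple}-assisted computation) that is nonnegative on $L_I$ yet negative on $\diag(u,v,-u-v)$ for $u,v$ $\bR$-independent, and handles case $3$ with a field-of-values argument, you replace all three by the single observation that $\det B$ factors through a few entries, so that $|\det B|=1$ and $\|B\|_F^2=3$ jointly force, via AM--GM (or a scalar optimization for $I_2$), the vanishing of an entry that must be nonzero. This is shorter, computer-free, and self-contained; the paper's invariant-polynomial approach is heavier but has the side benefit of producing explicit invariants separating the orbits. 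For the fourth pattern you converge with the paper: same counterexample $\diag(1,i,-1-i)$, same normality equations, same split on $c=0$ versus $c\neq0$, and the same conclusion in the $c=0$ branch (cyclic or singular shape). The only real difference is in the $c\neq0$ branch: the paper observes that the $2\times2$ block is normal with $0$ as a diagonal entry, so $0$ must lie on the segment joining two eigenvalues of $D$, which it does not; you instead extract $a=\bar d\,(c/\bar c)$ from $a\bar c=\bar dc$, compute $ad=|d|^2(c/\bar c)$, and compare with $-\mu_1\mu_2$ to force $|d|^2$ to be a non-real number. Both are valid; yours is a direct computation, the paper's a conceptual convexity argument. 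Two small presentational points worth tightening if this were to be written up: you should make explicit that the reduction to four cases (excluding $\Delta_3$ and the patterns handled by Examples \ref{dij+1} and \ref{drugi-neuni}) is being taken as given, and in the $c\neq0$ subcase the phrase ``demands $|d|^2\in\{-1+i,-1-i,-1\}$'' would read more cleanly if you first displayed $ad=|d|^2\,c/\bar c$ and $ad=-\mu_1\mu_2$ before drawing the conclusion.
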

\begin{proof}
We need only consider the four subspaces, $V$, mentioned above.
It turns out that in all four cases there exists a diagonal matrix
$D\in L(3)$ such that $\pO_D\cap V=\emptyset$.

In the first two cases the proof consists in constructing an
$\Un(3)$-invariant polynomial function $P:L(3)\to\bR$ which is
nonnegative on $V$ and negative on the diagonal matrices
$D=\diag(u,v,-u-v)$ when $u$ and $v$ are
linearly independent over $\bR$. The polynomial $P$ will be
expressed as a polynomial in the $\Un(3)$-invariants $i_k$ given
in Appendix A. We shall write $u=u_1+iu_2$, $u_1,u_2\in\bR$,
and similarly for other variables.

For the first subspace we define
\begin{eqnarray*}
P &=& (2i_4+3i_5-i_6)^2 +4i_1(i_2-i_3)(i_1i_3-6i_5) +4i_1^2(i_1i_5+i_8-i_7) \\
&& +4i_1i_2(i_6-i_4)+4i_8(5i_3-4i_2)+16i_3^2(2i_2-i_3) \\
&& +4i_7(2i_2-3i_3) +4i_2^2(i_2-5i_3)+8(i_1i_{11}-i_{13}).
\end{eqnarray*}
A computation using {\sc Maple} shows that for
$$ A=\left[ \begin{array}{ccc} 0&0&x\\ 0&z&y\\ u&v&-z
\end{array} \right] \in V $$
we have
$$ P(A)=(|u|^2-|x|^2)^2\left(
|x-\bar{u}|^2z_1^2-4(u_1x_2+u_2x_1)z_1z_2
+|x+\bar{u}|^2 z_2^2 \right)^2 . $$
On the other hand, we have $P(D)=-64(u_1v_2-u_2v_1)^2$.

For the second subspace we define $P=i_1^2+4(i_3-i_2)$.
One can easily verify that for
$$ A=\left[ \begin{array}{ccc} 0&x&y\\ u&z&0\\ v&0&-z
\end{array} \right] \in V $$
we have
$$ P(A)=(|u|^2+|v|^2-|x|^2-|y|^2)^2, $$
while $P(D)=-4(u_1v_2-u_2v_1)^2$.

For the remaining subspaces we have more elementary arguments.

In the third case let
$D=\diag(1,\zeta,\zeta^2)\in L(3)$, $\zeta=(-1+i\sqrt{3})/2$.
Since $D$ is a normal matrix, its field of values, $F(D)$,
is the equilateral triangle with vertices $1,\zeta,\zeta^2$.
Assume that $D$ is unitarily similar to a matrix $A=[a_{ij}]\in L_I$.
As $A\in L_I$,  $a_{22}$ is an eigenvalue of $A$ and so
$a_{22}\in\{1,\zeta,\zeta^2\}$. As $\tr(A)=0$ and $a_{11}=0$, we
deduce that $a_{33}=-a_{22}\notin F(D)$. Since $F(D)=F(A)$ and
$a_{33}$ is a diagonal entry of $A$, we have a contradiction.

In the last case let $D=\diag(1,i,-1-i)\in L(3)$.
Assume that $D$ is unitarily similar to a matrix
$$ A=\left[ \begin{array}{ccc} 0&0&x\\ y&z&0\\ u&v&-z
\end{array} \right] \in V. $$
Since $D$ is a normal matrix, so is $A$. From $AA^*=A^*A$ we obtain
the system of equations
\begin{eqnarray*}
&& x\bar{z}=z\bar{u},\quad z\bar{y}=-v\bar{u},\quad y\bar{u}=-2z\bar{v},  \\
&& |x|^2=|y|^2+|u|^2,\quad |y|=|v|.
\end{eqnarray*}
Assume that $z=0$. Then $uy=0$. As $A$ must be nonsingular, we have
$y\ne0$ and $u=0$. Thus $A^3$ is a scalar matrix. Since $D^3$ is not,
we have a contradiction.

We conclude that $z\ne0$. The equation $x\bar{z}=z\bar{u}$ implies
that $|x|=|u|$, which entails that $y=v=0$. Hence $z$ is an eigenvalue
of $A$, and so $z\in\{1,i,-1-i\}$. By switching the first two rows
(and columns) of $A$, we obtain the direct sum $[z]\oplus B$
where $B=\left[ \begin{array}{cc} 0&x\\ u&-z \end{array} \right].$
Hence $0\in F(B)$. This is a contradiction since $B$ is normal, and so
$F(B)$ is the line segment joining two of the eigenvalues of $D$.
\end{proof}

The following theorem provides an infinite collection of universal
patterns. It is an easy consequence of a result of Ko\v{s}ir and
Sethuraman proven in \cite{HS}.

\begin{theorem} \label{Kos-Set}
The pattern
$$ \left( (0,1)+\NE_{n-1} \right) \cup
\left\{ (i,1): 2<i\le n \right\} \cup \left\{ (n,2) \right\},
\quad n\ge3, $$
is $n$-universal.
\end{theorem}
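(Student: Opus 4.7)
The plan is to derive the theorem directly from the Ko\v{s}ir--Sethuraman result proved in \cite{HS}, which supplies, for every $A\in M(n)$, a unitary $U$ such that $UAU^{-1}$ has a prescribed zero shape. I would match that shape with $M_I(n)$.

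First, I would make $M_I(n)$ concrete by unpacking $I$. The translated triangle $(0,1)+\NE_{n-1}$ contributes zeros at $(i,j)$ with $1\le i\le n-2$ and $i+2\le j\le n$; the set $\{(i,1):2<i\le n\}$ kills column $1$ below row $2$; and $\{(n,2)\}$ removes one last entry. Thus $B=[b_{ij}]\in M_I(n)$ is lower Hessenberg in its first $n-2$ rows, has its first column supported only on rows $1$ and $2$, and satisfies $b_{n2}=0$. Counting gives $|I|=\mu_{n-1}+(n-2)+1=\mu_n$, so the pattern saturates the bound of Proposition~\ref{RestrDim}.

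Next I would invoke the Ko\v{s}ir--Sethuraman theorem, identifying its canonical form with the pattern $I$ above. Their canonical form may be stated under a different convention (for instance after transposition $I\mapsto I^T$, or after reversing the index labelling $i\mapsto n+1-i$), neither of which affects universality, as noted in Section~\ref{UnivObl}. With $U$ in hand so that $UAU^{-1}\in M_I(n)$, the conclusion is immediate: similarity preserves trace, so every $A\in L(n)$ is unitarily similar to some element of $L_I(n)=L(n)\cap M_I(n)$, and hence $I$ is $n$-universal.

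The only real obstacle is the bookkeeping in that last identification: one must verify that the canonical form in \cite{HS} matches the pattern $I$ after accounting for any difference in conventions. The matching dimension $\dim_{\bC}M_I(n)=n(n+1)/2$ and the specific shape (lower Hessenberg in the top $n-2$ rows, thinned first column, one extra zero at $(n,2)$) leave little room for ambiguity once the conventions are aligned, which is why the authors describe the theorem as an ``easy consequence'' of \cite{HS}.
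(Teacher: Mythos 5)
Your description of the pattern and the count $|I|=\mu_n$ are correct, but the way you invoke the Ko\v{s}ir--Sethuraman result hides a genuine gap. You treat \cite[Theorem A.4]{HS} as if it directly produced, for every $A$, a unitary $U$ with $UAU^{-1}$ lying in a single fixed zero pattern that ``matches $I$ up to conventions,'' so that the conclusion is immediate. That is not what the theorem says. The statement in \cite{HS} is a \emph{simultaneous} one: it provides $S\in\GL(n,\bC)$ such that $SA^{*}S^{-1}$ lands in one zero pattern (a shifted lower-triangular pattern $J'=(1,0)+\SW_{n-1}$) while $SAS^{-1}$ lands in a much smaller pattern $J''=\{(i,1):2<i\le n\}\cup\{(n,2)\}$. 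Neither of these two patterns is $I$, and neither becomes $I$ by transposition or relabelling; the pattern $I$ only appears after two further, non-cosmetic steps that your proposal does not carry out.

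First, $S$ must be replaced by a unitary matrix, which is justified by a separate remark in \cite{HS}, not by the theorem itself. Second — and this is the substantive point — once $S$ is unitary, $SA^{*}S^{-1}=(SAS^{-1})^{*}$, so the zeros imposed on $SA^{*}S^{-1}$ at positions $J'$ become zeros of $SAS^{-1}$ at the transposed positions $(J')^{T}=(0,1)+\NE_{n-1}$. Only after this conversion can one take the union with $J''$ to conclude $SAS^{-1}\in M_{(J')^T\cup J''}(n)=M_I(n)$. In other words, the identity $I=(J')^{T}\cup J''$, together with the unitarity used to pass from a constraint on $A^{*}$ to a constraint on $A$, is the content of the argument; it is not a bookkeeping reconciliation of conventions. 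Your proposal, by assuming the existence of a single canonical pattern in \cite{HS} equal to $I$ up to relabelling, effectively assumes what has to be proved.
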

\begin{proof}
Denote this pattern by $J$.
Let $A\in M(n)$ be arbitrary. By \cite[Theorem A.4]{HS}, there exists
$S\in\GL(n,\bC)$ such that $SA^*S^{-1}\in M_{J'}(n)$ and
$SAS^{-1}\in M_{J''}(n)$, where
$J'=(1,0)+\SW_{n-1}$ and $J''=\{(i,1):2<i\le n\}\cup\{(n,2)\}$.
By \cite[Remark 1]{HS}, we can assume
that $S$ is unitary.
As  $J=(J')^T \cup J''$, we deduce that $SAS^{-1}\in M_J(n).$
Hence $J$ is $n$-universal.
\end{proof}

\section{Nonsingular patterns} \label{Nonsing-Obl}

Before defining the singular and nonsingular patterns we introduce
some preliminary notions.

Let $\bR[x_1,x_2,\ldots]$ resp. $\bR[x_1^{\pm1},x_2^{\pm1},\ldots]$
be the polynomial ring resp. Laurent polynomial ring in countably
many commuting independent variables $x_1,x_2,\ldots$ over $\bR$.
We introduce an inner product, $\la .,. \ra$, in
$\bR[x_1^{\pm1},x_2^{\pm1},\ldots]$
by declaring that the basis consisting of the Laurent monomials
is orthonormal. We also introduce the involution $f\to f^*$ and the
shift endomorphism $\tau$ of this Laurent polynomial ring where,
by definition,
$$ f^*(x_1,x_2,\ldots) = f(x_1^{-1},x_2^{-1},\ldots) $$
and $\tau(x_i)=x_{i+1}$ for all $i\ge 1$.
For any Laurent polynomial $f$, we denote by $\CT\{f\}$ the constant
term of $f$. It is easy to see that for
$f,g\in\bR[x_1^{\pm1},x_2^{\pm1},\ldots]$ we have
$$ \la f,g \ra = \CT\{f^*g\} = \CT\{fg^*\}. $$

We shall denote by $\partial_i$ the partial derivative with respect
to the variable $x_i$. For any $f\in\bR[x_1,x_2,\ldots]$ we set
$$ \pd_f=f(\pd_1,\pd_2,\ldots). $$
To each pattern $I$ we associate the polynomial
$$ \chi_I=\prod_{(i,j)\in I}(x_i-x_j), $$
and  the differential operator
$$ \pd_ I=\pd_{\chi_I}=\prod_{(i,j)\in I}(\pd _i-\pd _j). $$

Next we introduce special notation for certain symmetric polynomials in
the first $n$ variables $x_1,x_2,\ldots,x_n$:
\begin{equation} \label{ElemSim}
\sig_{k,n}=\sum_{1\le i_1<i_2<\cdots<i_k\le n}
x_{i_1}x_{i_2}\cdots x_{i_k}, \quad k\in\{0,1,\ldots,n\},
\end{equation}
for the elementary symmetric functions, and
\begin{equation} \label{PotpSim}
h_{k,n}=\sum_{d_1+\cdots+d_n=k} x_1^{d_1}x_2^{d_2}\cdots x_n^{d_n},
\quad k\in\{0,1,\ldots,n\},
\end{equation}
for the complete symmetric functions.

We denote by $\pH(n)$ the quotient of
$\bR[x_1,x_2,\ldots,x_n]$ modulo the ideal $\pK(n)$ generated by the
$\sig_{k,n}$, $k\in\bZ_n$, and define
$\vf_n:\bR[x_1,x_2,\ldots,x_n]\to\pH(n)$ to be the
natural homomorphism.

\begin{definition}
We say that $I\in\pP_n$ is {\em $n$-singular} if
$\vf_n(\chi_I)=0$, and otherwise that it is {\em $n$-nonsingular}.
We say that $I\in\pP'_n$ is $n$-{\em exceptional} if it is
$n$-singular but not $n$-defective.
\end{definition}

Note that if a pattern $I$ contains a diagonal position
then $\chi_I=0$ (in particular, $I$ is singular).

\begin{lemma} \label{Pros-Lema}
Every $n$-nonsingular pattern is also $m$-nonsingular for all $m>n$.
\end{lemma}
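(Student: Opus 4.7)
The plan is to prove the contrapositive: if $I\in\pP_n$ (so also $I\in\pP_m$ for any $m>n$) is $m$-singular, then $I$ must already be $n$-singular. In other words, from a relation exhibiting $\chi_I\in\pK(m)$ I want to extract a relation exhibiting $\chi_I\in\pK(n)$.

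The key observation is that, because $I\subseteq\bZ_n\times\bZ_n$, the polynomial $\chi_I=\prod_{(i,j)\in I}(x_i-x_j)$ involves only the first $n$ variables $x_1,\ldots,x_n$. This suggests specializing the remaining variables. Concretely, I would consider the ring homomorphism $\varepsilon\colon\bR[x_1,\ldots,x_m]\to\bR[x_1,\ldots,x_n]$ defined by $\varepsilon(x_i)=x_i$ for $i\le n$ and $\varepsilon(x_i)=0$ for $n<i\le m$. Then $\varepsilon(\chi_I)=\chi_I$, while a direct inspection of (\ref{ElemSim}) gives
\[
\varepsilon(\sigma_{k,m})=\begin{cases}\sigma_{k,n}&\text{if }1\le k\le n,\\ 0&\text{if }n<k\le m.\end{cases}
\]

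The remaining step is essentially one line. Suppose $I$ is $m$-singular, so that we can write
\[
\chi_I=\sum_{k=1}^{m}f_k\,\sigma_{k,m}
\]
for some $f_k\in\bR[x_1,\ldots,x_m]$. Applying $\varepsilon$ to both sides and using the formulas above, the terms with $k>n$ drop out and one obtains
\[
\chi_I=\sum_{k=1}^{n}\varepsilon(f_k)\,\sigma_{k,n},
\]
which displays $\chi_I$ as an element of $\pK(n)$. Hence $I$ is $n$-singular, proving the contrapositive.

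There is no real obstacle: the only thing to watch is that the specialization $\varepsilon$ really kills $\sigma_{k,m}$ for $k>n$, which is clear from the definition of the elementary symmetric polynomials. The statement is in some sense a stability property of the quotient rings $\pH(m)$ with respect to deletion of variables, encoded by the compatibility $\varepsilon(\pK(m))\subseteq\pK(n)$.
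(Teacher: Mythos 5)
Your proof is correct and is essentially the paper's own argument: the paper likewise proves the contrapositive by writing $\chi_I=\sum_{k=1}^m f_k\,\sigma_{k,m}$ and then setting $x_{n+1}=\cdots=x_m=0$, which is exactly your specialization $\varepsilon$. You have merely made explicit the fact that $\varepsilon(\sigma_{k,m})=\sigma_{k,n}$ for $k\le n$ and vanishes for $k>n$, which the paper leaves tacit.
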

\begin{proof}
Let $I\in\pP_n$ be $m$-singular for some $m>n$.
Then there exist polynomials
$f_k\in\bR[x_1,x_2,\ldots,x_m]$ such that
$$ \chi_I(x_1,x_2,\ldots,x_n)=\sum_{k=1}^m
f_k(x_1,x_2,\ldots,x_m)\sig_{k,m}(x_1,x_2,\ldots,x_m) . $$
By setting $x_{n+1}=\cdots=x_m=0$, we see that $I$ is $n$-singular.
\end{proof}

We remark that the converse of this lemma is not valid. (All seven
patterns listed in Table 2 of Section \ref{Cetiri} are
counter-examples for $n=4$.)

To simplify the notation, we set
$$ \chi_n=\chi_{\NE_n}=\prod_{1\le i<j\le n} (x_i-x_j). $$
Its expansion is given by the well known formula
\begin{equation} \label{Hi_n}
\chi_n = \sum_{\sig\in S_n} \sgn(\sig) \prod_{i=1}^n x^{n-i}_{\sig(i)}.
\end{equation}
Consequently, we have $\la \chi_n,\chi_n \ra =n!$.

The following result (see \cite[Proposition 2.3]{AD}) provides a
practical method for deciding whether a pattern is nonsingular.
In particular, it implies that $\NE_n$ is nonsingular.

\begin{proposition} \label{FundPol}
A pattern $I\in\pP'_n$ is $n$-singular iff $\la \chi_I,\chi_n \ra =0$.
\end{proposition}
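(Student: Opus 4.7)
The plan is to exploit the antisymmetrizer $\pA=\sum_{\sig\in S_n}\sgn(\sig)\sig$, acting on $\bR[x_1,\ldots,x_n]$ by permutation of variables, together with the $S_n$-invariance of $\la\cdot,\cdot\ra$, which holds because $S_n$ permutes the orthonormal basis of monomials. First I would note that $\chi_I$ is homogeneous of degree $\mu_n=|I|$, the same degree as $\chi_n$. Combining $\sig(\chi_n)=\sgn(\sig)\chi_n$ with invariance of the inner product yields $\la\sig f,\chi_n\ra=\sgn(\sig)\la f,\chi_n\ra$ for every $f$ and $\sig$, and summing over $\sig$ gives the key identity
$$\la\pA(f),\chi_n\ra = n!\,\la f,\chi_n\ra.$$

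For the ``only if'' direction, I would suppose $\chi_I\in\pK(n)$ and write $\chi_I=\sum_{k=1}^n g_k\sig_{k,n}$ with $g_k$ homogeneous of degree $\mu_n-k$. Since $\sig_{k,n}$ is symmetric, $\pA(g_k\sig_{k,n})=\pA(g_k)\sig_{k,n}$, and $\pA(g_k)$ is antisymmetric of degree $\mu_n-k<\mu_n$. Because every nonzero antisymmetric polynomial in $x_1,\ldots,x_n$ is divisible by $\chi_n$ and therefore has degree at least $\mu_n$, one has $\pA(g_k)=0$ for each $k\ge 1$. Summing, $\pA(\chi_I)=0$, and the identity above then forces $\la\chi_I,\chi_n\ra=0$.

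For the ``if'' direction, I would invoke the classical structure theorem for the coinvariant algebra of $S_n$ (a special case of Chevalley's theorem): the top graded piece $\pH(n)_{\mu_n}$ is one-dimensional, spanned by $\vf_n(\chi_n)$. Hence $\chi_I=c\chi_n+h$ for some $c\in\bR$ and $h\in\pK(n)$. Taking the inner product with $\chi_n$, using $\la\chi_n,\chi_n\ra=n!$ and applying the ``only if'' direction to $h$, we get $\la\chi_I,\chi_n\ra=cn!$. Vanishing of the inner product then forces $c=0$, whence $\chi_I=h\in\pK(n)$, showing that $I$ is $n$-singular.

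The main obstacle is the invariant-theoretic input that $\dim\pH(n)_{\mu_n}=1$ with $\vf_n(\chi_n)$ as a generator. Without this structural fact, the antisymmetrization argument only delivers $\pA(\chi_I)=0$ from the vanishing of $\la\chi_I,\chi_n\ra$, which a priori is weaker than the desired conclusion $\chi_I\in\pK(n)$.
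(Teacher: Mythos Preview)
Your argument is correct. Both directions are sound: in the ``only if'' direction the antisymmetrizer kills each summand $g_k\sigma_{k,n}$ for the degree reason you give, and in the ``if'' direction the one-dimensionality of $\pH(n)_{\mu_n}$ together with $\la\chi_n,\chi_n\ra=n!\ne 0$ (which, via your ``only if'' argument, shows $\vf_n(\chi_n)\ne 0$) closes the loop.

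Note, however, that the paper does not actually prove this proposition here; it simply cites \cite[Proposition~2.3]{AD}. So there is no in-paper proof to compare against. That said, the toolkit the paper deploys elsewhere (the differential-operator facts (ii) and (iii) in Section~\ref{Analog}, drawn from Helgason) points to an alternative route: fact (ii) says $f\in\pK(n)$ implies $\pd_f\chi_n=0$, and fact (iii) converts $\pd_f\chi_n$ into a nonzero multiple of $\la f,\chi_n\ra$, giving the ``only if'' direction immediately; the ``if'' direction then again reduces to knowing that $\pH(n)_{\mu_n}$ is one-dimensional. Your antisymmetrizer argument for the ``only if'' direction is a bit more elementary in that it avoids the differential-operator identity and uses only the divisibility of antisymmetric polynomials by $\chi_n$, but both approaches ultimately rest on the same structural fact about the coinvariant algebra for the converse.
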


We now state two problems concerning the inner product in this
proposition.

\begin{problem} \label{Nejed}
Is it true that $\la \chi_I,\chi_n \ra \le n!$ for all $I\in\pP_n$
and that equality holds iff $\chi_I=\chi_n$?
\end{problem}
\begin{problem} \label{Norma}
Is it true that $\la \chi_I,\chi_I \ra \ge n!$ for all $I\in\pP'_n$
and that equality holds iff $\chi_I=\pm\chi_n$?
\end{problem}
The computations carried out for $n\le5$ show that, in these cases,
the answer is affirmative for both problems.

In connection with Theorem \ref{Kos-Set} we propose
\begin{conjecture} \label{Hess}
Let $J_{k,n}\in\pP'_n$, $k\in\bZ_{n-1}$, be the union of the translate
$(0,1)+\NE_{n-1}$, the product $\{n\}\times\bZ_k$, and
$\{(i,1):k<i<n\}$. Then
$$ \la \chi_{J_{k,n}}, \chi_n \ra = (-1)^{n-1}\binom{n}{k}\binom{n-2}{k-1}. $$
\end{conjecture}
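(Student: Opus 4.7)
By Proposition \ref{FundPol} the conjecture reduces to proving the scalar identity
\[
\la \chi_{J_{k,n}}, \chi_n\ra = (-1)^{n-1}\binom{n}{k}\binom{n-2}{k-1}.
\]
The first step of my plan is to record the factorization
\[
\chi_{J_{k,n}} = (-1)^{n-1} B_n \cdot \prod_{j=1}^{k}(x_j-x_n) \cdot \prod_{i=k+1}^{n-1}(x_1-x_i),
\]
where $B_n := \prod_{1\le i,\, i+2\le j \le n}(x_i - x_j) = \chi_n/\prod_{i=1}^{n-1}(x_i - x_{i+1})$ is the ``non-adjacent'' Vandermonde product; the sign $(-1)^{n-1}$ collects the $k$ signs from reversing each row-$n$ factor and the $n-1-k$ signs from reversing each column-$1$ factor. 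The right-hand side equals $n\cdot N(n-1,k)$, where $N(m,k) = \frac{1}{m}\binom{m}{k-1}\binom{m}{k}$ is a Narayana number, suggesting an underlying lattice-path or Hessenberg-variety interpretation of the inner product.

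My plan is to proceed by induction on $n$, separating the $x_n$-dependence. Using $\chi_n = \chi_{n-1}(x_1,\dots,x_{n-1})\prod_{i=1}^{n-1}(x_i-x_n)$ and the nested identity $B_n = B_{n-1}\prod_{i=1}^{n-2}(x_i-x_n)$, the factorization above gives, for $k\le n-2$,
\[
\chi_{J_{k,n}} = (-1)^{n-1}\prod_{j=1}^{k}(x_j-x_n)^2 \prod_{j=k+1}^{n-2}(x_j-x_n) \cdot B_{n-1}\prod_{i=k+1}^{n-1}(x_1-x_i),
\]
with a similar expression for $k=n-1$. For each $p \in \{0,1,\dots,n-1\}$ one extracts the coefficient of $x_n^p$ from both $\chi_{J_{k,n}}$ and $\chi_n$ as polynomials in $x_1,\dots,x_{n-1}$, so that the whole inner product splits as a finite sum over $p$ of inner products in those $n-1$ variables. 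Each piece in $x_1,\dots,x_{n-1}$ admits, up to elementary symmetric functions, an expression as a pattern to which the induction hypothesis applies (namely, the patterns $J_{k',n-1}$ for appropriate $k'$); a Vandermonde-type convolution of binomials should then collapse the resulting sum to $(-1)^{n-1}\binom{n}{k}\binom{n-2}{k-1}$.

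The main obstacle, I expect, is identifying the right parametrization so that the inductive sum telescopes to this specific product of two binomials. The factor $\binom{n}{k}$ plausibly records a choice of the $k$ columns occupied by the row-$n$ block of $J_{k,n}$; the factor $\binom{n-2}{k-1}$ is less transparent, and may require a bijection with Dyck paths (consistent with the Narayana-number appearance), a sign-reversing involution on the signed choice functions obtained by expanding each factor $(x_a-x_b)$ in $\chi_{J_{k,n}}$ as either $x_a$ or $-x_b$, or an appeal to the Hessenberg-variety formalism underlying the Ko\v{s}ir--Sethuraman theorem invoked in Section \ref{UnivObl}. Failing those, a residue-calculus evaluation of $\CT\{\chi_n^2 R/\prod_i x_i^{n-1}\}$, with $R$ the degree-zero rational function satisfying $\chi_{J_{k,n}} = (-1)^{n-1}\chi_n R$, remains available, though the multi-pole at the origin makes this option combinatorially heavy.
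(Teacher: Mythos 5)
First, be aware that the paper states this as a \emph{conjecture}, not a theorem: the authors establish only the $k=1$ case (Example~\ref{Primer-1}, as a corollary of Theorem~\ref{Fam-J}), note the universality of $J_{2,n}$ via Theorem~\ref{Kos-Set}, and report computational verification for $n\le10$ (with one memory failure). There is therefore no paper proof to compare against; what you have submitted is an attack on an open problem and should be framed as such, not as a proof.

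Your bookkeeping is correct: the factorization $\chi_{J_{k,n}}=(-1)^{n-1}B_n\prod_{j\le k}(x_j-x_n)\prod_{i>k}(x_1-x_i)$, the sign count, the nested identity $B_n=B_{n-1}\prod_{i\le n-2}(x_i-x_n)$, and the Narayana observation $\binom{n}{k}\binom{n-2}{k-1}=n\,N(n-1,k)$ all check out. But the argument as written has two genuine gaps. The opening appeal to Proposition~\ref{FundPol} is inert --- that proposition is only the vanishing criterion $\la\chi_I,\chi_n\ra=0$ for singularity; it neither reduces nor evaluates anything here, since the conjecture already is the scalar identity. More seriously, the inductive step is never carried out and is unlikely to close in the form proposed: the coefficient $[x_n^p]\chi_{J_{k,n}}$ equals $(-1)^{n-1}B_{n-1}\prod_{i>k}(x_1-x_i)$ times a weighted polynomial obtained from expanding $\prod_{j\le k}(x_j-x_n)^2\prod_{k<j\le n-2}(x_j-x_n)$, and these pieces are generally not $\chi_{J_{k',n-1}}$ nor $\chi_I$ for any $I\in\pP'_{n-1}$; the phrase ``up to elementary symmetric functions'' is doing a lot of unexamined work, since the inner product is taken in the polynomial ring, not in $\pH(n-1)$. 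A route more aligned with the paper's methods --- and the one the authors actually use to settle $k=1$ --- is the differential-operator calculus of Sections~\ref{Nonsing-Obl}--\ref{Analogue}: replace the column-$1$ and row-$n$ products by $\pd_r h_{\cdot,\cdot}$ via Lemma~\ref{Kongr}, then iterate $\pd$-operators as in Lemma~\ref{glavna} or Lemma~\ref{L:3}. That machinery evaluates $\la\cdot,\chi_n\ra$ exactly, working freely modulo $\pK(n)$ through~(\ref{fg-hi}) and~(\ref{df-hi}), and avoids the fragile coefficient-by-coefficient matching your plan requires. You might try to see whether $J_{k,n}$, though not of the form $J(\sig,\mathbf{i})$, is amenable to a similar row-by-row peeling in which Lemma~\ref{Kongr} handles the row-$n$ block and the column-$1$ block in two separate stages.
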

The assertion in the case $k=1$ will be proved in Example \ref{Primer-1}.
The universality of $J_{2,n}$ was proved in Theorem \ref{Kos-Set}.
The conjecture has been verified for $n\le10$ and all $k\in\bZ_{n-1}$,
except for $(n,k)=(10,5)$ in which case our program ran out of memory.

We say that a pattern $I$ is {\em simply laced}, or just {\em simple},
if $I \cap I^T=\emptyset$,
and otherwise we say that $I$ is {\em doubly laced}.
In particular, a simple pattern is strict.
Any simple pattern $I\in\pP_n$ can be extended to
a simple pattern $I'\in\pP'_n$. As $\chi_I$ divides $\chi_{I'}$
and $\chi_{I'}=\pm\chi_n$, any simple pattern is nonsingular.

The following two results are extracted from Proposition 4.3 and
Theorem 4.2 of our paper \cite{AD}.

\begin{proposition} \label{Prosiri}
Every nonsingular pattern $I\in\pP_n$ can be extended to
a nonsingular pattern $I'\in\pP'_n$.
\end{proposition}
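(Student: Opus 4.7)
My plan is to induct on $\mu_n - |I|$. I would first observe that any nonsingular pattern is automatically strict (since a diagonal position $(i,i)\in I$ forces $\chi_I=0\in\pK(n)$), and satisfies $|I|\le\mu_n$ (since the graded algebra $\pH(n)$ is supported in degrees $0,1,\ldots,\mu_n$). The base case $|I|=\mu_n$ is then immediate: simply take $I'=I\in\pP'_n$. All the substance lies in the inductive step: given a nonsingular $I$ with $|I|<\mu_n$, one must add an off-diagonal position $(i,j)\notin I$ while preserving nonsingularity, i.e., so that $\vf_n((x_i-x_j)\chi_I)\ne 0$.

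I would exploit that the coinvariant algebra $\pH(n)$ is a graded Frobenius algebra with one-dimensional socle concentrated in the top degree $\mu_n$. Writing $f=\vf_n(\chi_I)\ne 0$, of degree $|I|<\mu_n$, the annihilator of $f$ cannot contain all of $\pH(n)_{\ge 1}$, and in particular it cannot contain the whole of $\pH(n)_1$. Because $\vf_n(\sig_{1,n})=0$, the linear part $\pH(n)_1$ is spanned by the images of the differences $x_i-x_j$ ($i\ne j$), so at least one such pair satisfies $(x_i-x_j)f\ne 0$ in $\pH(n)$.

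The subtlety is to arrange for such a pair with $(i,j)\notin I$. I would introduce the graph $G$ on $\{1,\ldots,n\}$ whose edges are the pairs $\{i,j\}$ with $(i,j)\notin I$ or $(j,i)\notin I$. If $G$ is connected then, by telescoping along paths, the edge-differences span $\pH(n)_1$; one can therefore find an edge $\{i,j\}$ with $(x_i-x_j)f\ne 0$, and at least one of $(i,j),(j,i)$ is the required extension position. The \emph{main obstacle} is the disconnected case, which I would handle by ruling it out: a disconnected $G$ would force $I$ itself to be singular.

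For that last step, suppose $V$ were a connected component of $G$ with $|V|=k\in\{1,\ldots,n-1\}$. By the definition of $G$, every mixed position $(i,j)$ with $i\in V$, $j\in V^c$ belongs to $I$; hence $\chi_I$ is divisible (up to sign) by $\Delta_V^2$, where $\Delta_V=\prod_{i\in V,\,j\in V^c}(x_i-x_j)$. Now $\Delta_V$ is invariant under the Young subgroup $S_k\times S_{n-k}$ acting by permutations within $V$ and within $V^c$, so $\Delta_V^2$ lies in the invariant subring $\pH(n)^{S_k\times S_{n-k}}$. That subring has Hilbert series equal to the Gaussian binomial $\binom{n}{k}_q$, and in particular its top degree is $k(n-k)$; since $\deg\Delta_V^2=2k(n-k)$ strictly exceeds $k(n-k)$, we obtain $\Delta_V^2=0$ in the subring and therefore in $\pH(n)$. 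This forces $f=0$, contradicting nonsingularity of $I$. Hence $G$ is connected and the induction closes.
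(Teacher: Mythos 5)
The paper itself does not prove this statement — it cites Proposition~4.3 of \cite{AD} — so a direct comparison is not possible; but your argument is self-contained and, as far as I can check, correct. The key steps all hold up: (i) a nonsingular pattern is strict and has $|I|\le\mu_n$ because $\pH(n)$ vanishes above degree $\mu_n$; (ii) since $\pH(n)$ is a graded Artinian Gorenstein (Frobenius) algebra with one-dimensional socle in top degree $\mu_n$, a nonzero class $f=\vf_n(\chi_I)$ of lower degree is not annihilated by $\pH(n)_1$, and $\pH(n)_1$ is spanned by the images of the $x_i-x_j$ because $\sig_{1,n}\in\pK(n)$; (iii) if the graph $G$ of ``unblocked'' pairs is connected, telescoping shows the edge-differences still span $\pH(n)_1$, and whichever of $(i,j),(j,i)$ is absent from $I$ gives the required extension (since $(x_j-x_i)f=-(x_i-x_j)f$); (iv) in the disconnected case, $\chi_I$ is divisible by $\Delta_V^2$, and since $\Delta_V^2$ is $S_k\times S_{n-k}$-invariant of degree $2k(n-k)$ while $\pH(n)^{S_k\times S_{n-k}}$ has Hilbert series $\binom{n}{k}_q$ with top degree $k(n-k)$, we get $\vf_n(\Delta_V^2)=0$, forcing $f=0$, a contradiction. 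One small remark: the Grassmannian Hilbert-series step is genuinely needed, since for $n\ge4$ a blocked component of size $1$ could have $2(n-1)\le\mu_n$, so a pure counting bound on $|I|$ would not close the disconnected case.
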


\begin{theorem} \label{TeorAD} {\rm (Generalization of Schur's
triangularization theorem)}
Every nonsingular pattern in $\pP_n$ is $n$-universal.
In particular, every simple pattern in $\pP_n$ is $n$-universal
\end{theorem}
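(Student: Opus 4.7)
The plan is cohomological: pass to the flag manifold $\pF_n = \Un(n)/T_n$ and identify the condition ``$UAU^{-1} \in L_I(n)$'' with the vanishing of a section of a complex vector bundle whose Euler class is encoded by $\chi_I$. First, by Proposition \ref{Prosiri}, any nonsingular $I \in \pP_n$ extends to some nonsingular $I' \in \pP'_n$; since $L_{I'}(n) \subseteq L_I(n)$, it suffices to prove the theorem for $I \in \pP'_n$, in which case $\dim_\bR \pF_n = n(n-1) = 2|I|$ matches the real dimension of $L(n)/L_I(n)$. The ``in particular'' clause will then follow from the remark already in the text that every simple pattern is nonsingular.

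Because the conjugation map $\Un(n) \times L_I(n) \to L(n)$ has compact source factor $\Un(n)$, the image $\Un(n) \cdot L_I(n)$ is closed, so it is enough to treat a dense set of $A \in L(n)$, say those with pairwise distinct eigenvalues. For such $A$, conjugate so that $A = \diag(a_1, \ldots, a_n)$; then the orbit map $\pF_n \to \pO_A$, $UT_n \mapsto UAU^{-1}$, is a diffeomorphism, and universality at $A$ amounts to the vanishing somewhere on $\pF_n$ of the smooth map
\[
  f_A : \pF_n \to L(n)/L_I(n), \qquad UT_n \mapsto [\,UAU^{-1}\,].
\]

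The heart of the argument is to realize $f_A$ as a section of a complex vector bundle. Over $\pF_n$ sit the tautological subbundles $V_0 \subset V_1 \subset \cdots \subset V_n$ and their line quotients $\ell_k = V_k/V_{k-1}$. For each $(i,j) \in I$, tracking the phase dependence on the frame shows that the entry $(UAU^{-1})_{ij}$ descends to a section of the line bundle $\ell_j^* \otimes \ell_i$, and $f_A$ thereby corresponds to the total section $s_A$ of $E_I = \bigoplus_{(i,j)\in I}\, \ell_j^* \otimes \ell_i$, whose zero set is precisely $f_A^{-1}(0)$. Under Borel's presentation $H^*(\pF_n;\bR) \cong \pH(n)$, with $c_1(\ell_k)$ identified with $\pm x_k$, the top Chern (equivalently Euler) class of $E_I$ is the image of $\chi_I$, up to an overall sign, in $\pH(n)$.

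Nonsingularity of $I$ means exactly that this image is nonzero in the top-degree piece of $\pH(n)$, equivalently $\la \chi_I, \chi_n \ra \ne 0$ by Proposition \ref{FundPol}. Since $E_I$ is a complex bundle of rank $|I|$ over the complex manifold $\pF_n$ of the matching complex dimension, any transverse section has only positive local intersection indices, so its signed zero count equals the nonzero Euler number and $s_A$ must vanish somewhere. If $s_A$ fails to be transverse, replace $A$ by a small perturbation inside the open dense set of matrices with simple spectrum, apply the same argument, and pass to the limit using the closedness of $\Un(n)\cdot L_I(n)$. The principal technical nuisance is setting up the identification of $(UAU^{-1})_{ij}$ with a section of $\ell_j^* \otimes \ell_i$ together with the sign conventions, so that the Euler class comes out as the image of $\chi_I$ rather than some sign-twisted variant; once this bookkeeping is in place the topological step itself is essentially a one-line appeal to Borel's presentation and the nonsingularity hypothesis.
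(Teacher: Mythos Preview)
The paper itself does not prove this theorem; it is quoted from the authors' earlier paper \cite{AD}, where the argument is indeed the Euler-class computation on the flag variety via Borel's presentation that you outline. So the overall strategy is right, and the reduction to $I\in\pP'_n$ via Proposition~\ref{Prosiri} is exactly how one should start.

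There is, however, a real slip in the bundle identification. You first diagonalize $A$ and then consider $UT_n\mapsto (UAU^{-1})_{ij}$. But once $A$ is diagonal, $tAt^{-1}=A$ for every $t\in T_n$, so $(UAU^{-1})_{ij}$ is $T_n$-invariant: it is an honest scalar function on $\pF_n$, not a section of the nontrivial line bundle $\ell_j^*\otimes\ell_i$. With that identification the Euler class of $E_I$ plays no role and the argument collapses.

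The repair is to drop the diagonalization and reverse the conjugation. For arbitrary $A\in L(n)$ set $s_{ij}(g)=(g^{-1}Ag)_{ij}$; then $s_{ij}(gt)=t_i^{-1}t_j\,s_{ij}(g)$, so $s_{ij}$ genuinely defines a section of $\Hom(\ell_j,\ell_i)=\ell_j^*\otimes\ell_i$ (intrinsically, it is the composite $\ell_j\hookrightarrow\bC^n\xrightarrow{A}\bC^n\twoheadrightarrow\ell_i$, the last arrow being orthogonal projection). Now $e(E_I)=\prod_{(i,j)\in I}c_1(\ell_j^*\otimes\ell_i)=\vf_n(\chi_I)$ in $H^{2\mu_n}(\pF_n;\bR)\cong\pH(n)_{\mu_n}$, and nonsingularity of $I$ says this class is nonzero. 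A nowhere-vanishing section would force $e(E_I)=0$, so $s_A$ must have a zero for \emph{every} $A$; no transversality, no density argument, and no appeal to the closedness of $\Un(n)\cdot L_I(n)$ is needed. Your remarks about positive local indices are correct and are what underlies the counting of $N_{A,I}$ in Section~\ref{RedFlag}, but they are more than is required for the bare universality statement.
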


In connection with these results we pose an open problem.

\begin{problem} \label{Pros-Str}
Let $I$ be a strict $n$-universal pattern.
Does $I$ extend to a strict $n$-universal pattern of size $\mu_n$?
\end{problem}

If we replace the word ``strict'' with ``proper'' then
Proposition \ref{3-Obl} shows that the answer is negative.

A special case of the general inner product identity that we prove in the next
theorem will be used in the proof of the subsequent proposition.

\begin{theorem} \label{KompGr}
Let $G$ be a connected compact Lie group, $T$ a maximal torus with Lie algebra
$\mathfrak{t}$ and $W$ the Weyl group.
Denote by $\mu$ be the number of positive roots and by $\chi$ their product.
Let $\pA=\oplus_{n\ge0} \pA_n$ be the algebra of polynomial functions $\mathfrak{t}\to\bR$ with the degree gradation,
and let $\la\cdot,\cdot\ra$ be a $W$-invariant inner product on $\pA$.
If $h\in\mathcal{A}$ is $W$-invariant, then
$$ \|\chi\|^2 \la hf,h\chi \ra=\|h\chi\|^2\la f,\chi \ra,
\quad \forall f\in\mathcal{A}_\mu. $$
\end{theorem}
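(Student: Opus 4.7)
The plan is to exploit the antisymmetrization operator $A=\frac{1}{|W|}\sum_{w\in W}\sgn(w)\,w$ acting on $\pA$. The $W$-invariance of $\la\cdot,\cdot\ra$ forces $A$ to be self-adjoint: applying $\la wf,g\ra=\la f,w^{-1}g\ra$ inside $\la Af,g\ra=\frac{1}{|W|}\sum_w\sgn(w)\la wf,g\ra$ and then substituting $v=w^{-1}$ yields $\la f,Ag\ra$. Moreover $A$ is idempotent and its image is the space $\pA^{-}$ of $W$-anti-invariants. The key structural input is the classical fact from reflection-group theory that every element of $\pA^{-}$ is divisible by $\chi$: for each positive root $\alpha$ the reflection $s_\alpha$ sends an anti-invariant $p$ to $-p$, so $p$ vanishes on the hyperplane $\{\alpha=0\}$ and hence $\alpha\mid p$; since distinct positive roots are pairwise coprime in the polynomial ring $\pA$, their product $\chi$ divides $p$. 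Because $\deg\chi=\mu$, this shows that $\pA^{-}\cap\pA_\mu=\bR\chi$ is one-dimensional.

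Now fix $f\in\pA_\mu$. Since $h$ is $W$-invariant, $A$ commutes with multiplication by $h$, so $A(hf)=hA(f)$. By the previous paragraph, $A(f)=\lambda_f\chi$ for some scalar $\lambda_f\in\bR$, and using $A\chi=\chi$ together with the self-adjointness of $A$,
$$\la f,\chi\ra=\la f,A\chi\ra=\la Af,\chi\ra=\lambda_f\|\chi\|^2,$$
so $\lambda_f=\la f,\chi\ra/\|\chi\|^2$. Similarly $h\chi\in\pA^{-}$, so $A(h\chi)=h\chi$, and
$$\la hf,h\chi\ra=\la hf,A(h\chi)\ra=\la A(hf),h\chi\ra=\la hA(f),h\chi\ra=\lambda_f\|h\chi\|^2.$$
Combining the two displays gives $\|\chi\|^2\la hf,h\chi\ra=\|h\chi\|^2\la f,\chi\ra$, which is the desired identity.

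The only genuinely nontrivial ingredient is the one-dimensionality of $\pA^{-}\cap\pA_\mu$; the remainder is formal manipulation resting on self-adjointness of $A$ and the commutation $A\circ h=h\circ A$ coming from $W$-invariance of $h$. I expect no serious obstacle beyond isolating the role of $\chi$ as the generator of $\pA^{-}$ over $\pA^{W}$ in the minimal degree.
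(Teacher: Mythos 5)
Your proof is correct and is essentially the paper's argument recast in terms of the antisymmetrization projector $A=\tfrac{1}{|W|}\sum_w\sgn(w)\,w$ rather than the representing vector of the linear functional $f\mapsto\la hf,h\chi\ra$; both reduce the claim to the single fact that the sign-isotypic component of $\pA_\mu$ is $\bR\chi$, and then match one scalar. You additionally supply the classical divisibility argument showing that every anti-invariant is a multiple of $\chi$ (hence $\pA^-\cap\pA_\mu=\bR\chi$), a point the paper simply asserts.
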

\begin{proof}
We may assume that $h\ne0$.
For the linear functional $L:\pA_\mu\to\bR$ defined by $L(f)=\la hf, h\chi \ra$,
we have $\sig\cdot L=\sgn(\sig)L$ for all $\sig\in W$. There is a unique $g\in\pA_\mu$
such that $L(f)=\la f,g \ra$ for all $f\in\pA_\mu$. Moreover, $\sig\cdot g=\sgn(\sig)g$
for all $\sig\in W$. As the sign representation of $W$ occurs only once in $\pA_\mu$,
we must have $g=c\chi$ for some $c\in\bR$. The equality $L(\chi)=\la \chi,c\chi \ra$
completes the proof.
\end{proof}

The next proposition, a nonsingular analogue of Proposition \ref{UnivPros},
shows that some extensions of nonsingular
patterns are nonsingular. In a weaker form, it was originally
conjectured by Jiu-Kang Yu.

\begin{proposition} \label{Pros-Stav}
For $I\in\pP'_n$, $J\in\pP'_{m-n}$, $m>n$, and
$$ I'=I \cup \left( (n,n)+J \right) \cup
\left( (0,n)+\bZ_n\times\bZ_{m-n} \right) $$
we have
$$ \la \chi_{I'},\chi_m \ra={m \choose n} \la \chi_I,\chi_n \ra
\la \chi_J,\chi_{m-n} \ra. $$
\end{proposition}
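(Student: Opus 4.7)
The plan is to exploit that both $\chi_{I'}$ and $\chi_m$ share a common factor, and to compute their inner product by antisymmetrization in two stages. Setting
$$ V := \prod_{1\le i\le n < j\le m}(x_i-x_j), $$
the description of $I'$ gives $\chi_{I'}=\chi_I\cdot\tau^n(\chi_J)\cdot V$, and factoring the Vandermonde $\chi_m$ according to whether the two indices are both $\le n$, both $>n$, or mixed gives $\chi_m=\chi_n\cdot\tau^n(\chi_{m-n})\cdot V$. The crucial feature is that $V$ is invariant under the Young subgroup $H=S_n\times S_{m-n}\subset S_m$ (it is symmetric separately in $x_1,\ldots,x_n$ and in $x_{n+1},\ldots,x_m$), even though it is not invariant under all of $S_m$.

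Since any permutation of variables maps Laurent monomials to Laurent monomials, the inner product is $S_m$-invariant. In particular, for any $f$ and any $S_m$-antisymmetric $g$,
$$ \la \mathrm{Alt}_{S_m}(f),\, g\ra = m!\,\la f,g\ra,\qquad \mathrm{Alt}_{S_m}(f):=\sum_{\sigma\in S_m}\sgn(\sigma)\,\sigma\cdot f. $$
Applying this with $f=\chi_{I'}$, $g=\chi_m$ and using $\|\chi_m\|^2=m!$, the proposition reduces to verifying
$$ \mathrm{Alt}_{S_m}(\chi_{I'}) = \binom{m}{n}\,\la\chi_I,\chi_n\ra\,\la\chi_J,\chi_{m-n}\ra\cdot\chi_m. $$
To evaluate the left side I would first antisymmetrize only over $H$. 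Because $V$ is $H$-invariant and $\chi_I$, $\tau^n(\chi_J)$ involve disjoint variable sets,
$$ \mathrm{Alt}_H(\chi_{I'}) = V\cdot\mathrm{Alt}_{S_n}(\chi_I)\cdot\mathrm{Alt}_{S_{m-n}}(\tau^n\chi_J). $$
Now $\mathrm{Alt}_{S_n}(\chi_I)$ is $S_n$-antisymmetric of degree $|I|=\mu_n=\deg\chi_n$, so it is a scalar multiple $c_I\,\chi_n$; pairing with $\chi_n$ and using $\|\chi_n\|^2=n!$ together with the invariance identity applied on $n$ variables yields $c_I=\la\chi_I,\chi_n\ra$, and the analogous identification holds for $\chi_J$. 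Combining these gives $\mathrm{Alt}_H(\chi_{I'}) = \la\chi_I,\chi_n\ra\,\la\chi_J,\chi_{m-n}\ra\cdot\chi_m$.

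Finally, to lift from $H$ to $S_m$, I would choose left coset representatives $\{\sigma_i\}$ for $S_m/H$ and write $\mathrm{Alt}_{S_m}=\sum_i\sgn(\sigma_i)\,\sigma_i\circ\mathrm{Alt}_H$. Each term $\sgn(\sigma_i)\sigma_i\chi_m$ equals $\chi_m$ by the antisymmetry of $\chi_m$, so this sum produces $[S_m:H]=\binom{m}{n}$ copies of $\chi_m$, which gives the claimed formula. The main technical step---and the only one using the size condition $|I|=\mu_n$---is the identification $\mathrm{Alt}_{S_n}(\chi_I)=\la\chi_I,\chi_n\ra\,\chi_n$; this is essentially the projection-onto-the-sign-representation argument that underlies Theorem \ref{KompGr} itself.
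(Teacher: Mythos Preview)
Your argument is correct. The factorizations of $\chi_{I'}$ and $\chi_m$, the $H$-invariance of $V$, the identification $\mathrm{Alt}_{S_n}(\chi_I)=\la\chi_I,\chi_n\ra\,\chi_n$ (via the standard fact that an $S_n$-antisymmetric polynomial of degree $\mu_n$ is a scalar multiple of the Vandermonde), and the coset-sum step all go through cleanly.

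The paper's route is closely related but packaged differently: it invokes Theorem~\ref{KompGr} for $G=\Un(n)\times\Un(m-n)$, taking $\chi=\chi_n\cdot\tau^n\chi_{m-n}$, $f=\chi_I\cdot\tau^n\chi_J$, and $h=V$. That theorem's content is precisely the ``sign representation occurs once in degree $\mu$'' observation you use, so the two proofs share the same core idea. Your two-stage antisymmetrization is more hands-on and self-contained (it avoids the abstract Lie-group setup and makes the binomial coefficient appear transparently as an index $[S_m:H]$), whereas the paper's version isolates the mechanism once in Theorem~\ref{KompGr} and then applies it as a black box. Either way the essential step, as you note, is the projection onto the sign isotypic component in the correct degree.
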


\begin{proof}
We apply the above theorem to the case where $G=\Un(n)\times\Un(m-n)$.
Then the algebra $\pA$ can be identified with the polynomial algebra
$\bR[x_1,\ldots,x_m]$ so that $\bR[x_1,\ldots,x_n]$ resp.
$\bR[x_{n+1},\ldots,x_m]$ is the corresponding algebra for
$\Un(n)$ resp. $\Un(m-n)$. The polynomial $\chi$ factorizes as
$\chi=\chi_n\cdot\tau^n\chi_{m-n}$, where $\tau$ is the shift operator.
We take $f=\chi_I\cdot\tau^n\chi_J$ and for $h$ we take the polynomial
$$
\prod_{i=1}^n\prod_{j=n+1}^m(x_i-x_j),
$$
which is obviously invariant under $W=S_n\times S_{m-n}$.
Since $h\chi=\chi_m$ and $hf=\chi_{I'}$, the theorem gives the identity
$$ \|\chi\|^2 \la \chi_{I'},\chi_m \ra=\|\chi_m\|^2\la f,\chi \ra. $$
Since $\|\chi\|^2=n!(m-n)!$, $\|h\chi\|^2=\|\chi_m\|^2=m!$, and
$$ \la \chi_I\cdot\tau^n\chi_J,\chi_n\cdot\tau^n\chi_{m-n} \ra=
 \la \chi_I,\chi_n \ra \la \chi_J,\chi_{m-n} \ra, $$
the assertion follows.
\end{proof}

\section{Equivalence and weak equivalence} \label{Ekv}

The symmetric group $S_n$ acts on $\pP_n$ by
$\sig(I)=\{(\sig(i),\sig(j)):\, (i,j)\in I\}$ for $\sig\in S_n$.
For $\sig\in S_n$ and $I\in\pP_n$ we have $\sig\cdot\chi_I=\chi_{\sig(I)}$.
As $\pP'_n$ is $S_n$-invariant, we obtain an action on $\pP'_n$.
We denote by $\tilde{S}_n$ the group of transformations of
$\pP'_n$ generated by the action of $S_n$
and the restriction of transposition $T$ to $\pP'_n$.
(Note that this restriction commutes with the action of $S_n$.)
As the inner product is $S_n$-invariant, we have
$$ \la \chi_{\sig(I)},\chi_n \ra=\la \chi_I,\sig\cdot\chi_n \ra=
\sgn(\sig) \la \chi_I,\chi_n \ra,\quad I\in\pP'_n,\quad \sig\in S_n. $$

We say that the patterns $I,I'\in\pP'_n$ are \emph{equivalent}
if they belong to the same orbit of $\tilde{S}_n$.
If so, we shall write $I\approx I'$. We denote by $[I]$
the equivalence class of $I\in\pP'_n$.

Assume that $I\approx I'$. It is easy to see that
$I'$ is universal iff $I$ is universal. Since $\ker \vf_n$ is
invariant under permutations of the variables $x_1,x_2,\ldots,x_n$,
we deduce that $I'$ is nonsingular iff $I$ is nonsingular.
We say that the class $[I]$ is {\em singular, nonsingular,
universal, defective} or {\em exceptional} if $I$ has
the same property. These terms are clearly well defined.

Let $I\in\pP$ be any pattern and let us fix a position $(i,j)\in I$ such
that $(j,i)\notin I$. Denote by $I'$ the pattern obtained from $I$ by
replacing the position $(i,j)$ with $(j,i)$. We shall refer to
the transformation $I\to I'$ as a {\em flip}.

\begin{problem} \label{Flip}
Let $I\in\pP'_n$ be $n$-universal and let $I\to I'$ be a flip.
Is it true that $I'$ is $n$-universal?
\end{problem}

We say that the patterns $I,I'\in\pP_n$ are \emph{weakly equivalent}
if $I$ can be transformed to $I'$ by using flips and the
action of $S_n$. If so, we shall write $I\sim I'$. We denote by $[I]_w$
the weak equivalence class of $I\in\pP_n$.
As the transposition map $\pP_n\to\pP_n$ can be realized
by a sequence of flips, we have $[I]\subseteq[I]_w$ for all $I\in\pP_n$.
Note that the nonsingularity property is preserved by weak equivalence.

Let us define the {\em complexity}, $\nu(I)$, of a pattern $I$
as the number of positions $(i,j)\in I\cap I^T$ with $i\le j$.
The patterns of complexity 0 are precisely the simple patterns.
Observe that a pattern $I\in\pP'_n$ has complexity 1 iff there exist
a unique $2$-element subset $\{i,j\}\subseteq\bZ_n$ such that
$(i,j)$ and $(j,i)$ belong to $I$.
Similarly, $I\in\pP'_n$ has complexity 1 iff there exist a unique
$2$-element subset $\{k,l\}\subseteq\bZ_n$ such that neither
$(k,l)$ nor $(l,k)$ is in $I$.

It is natural to ask which patterns $I\in\pP'_n$ of
complexity 1 are universal or nonsingular. We shall now give a
complete answer to the latter question. At the same time we classify,
up to weak equivalence, the patterns in $\pP'_n$ having complexity 1.

\begin{theorem} \label{WE-Compl-1}
Let $I\in\pP'_n$, $\nu(I)=1$, and let $\{i,j\}$ resp.
$\{k,l\}$ be the unique $2$-element subset of $\bZ_n$ such that
$(i,j),(j,i)\in I$ resp. $(k,l),(l,k)\notin I$.

(a) If $i,j,k,l$ are not distinct then
$$ I \sim \left( \NE_n \setminus \{(1,2)\} \right)
\cup \{(3,1)\}, \quad n\ge3, $$
and we have $\la \chi_I,\chi_n \ra= \pm\; n!/2.$

(b) If  $i,j,k,l$ are distinct then $I$ is singular and
$$ I \sim \left( \NE_n \setminus \{(1,2)\} \right)
\cup \{(4,3)\}, \quad n\ge4. $$
\end{theorem}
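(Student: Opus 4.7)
The plan is to first reduce every pattern $I$ with $\nu(I)=1$ to one of two normal forms by weak equivalence, and then perform the inner product computation only on those normal forms.

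A pattern $I\in\pP'_n$ with $\nu(I)=1$ has a completely rigid \emph{pair structure}: there is one doubled pair $\{i,j\}$ (both orientations in $I$), one missing pair $\{k,l\}$ (neither orientation in $I$), and the remaining $\mu_n-2$ off-diagonal unordered pairs are \emph{simple}, meaning exactly one of the two orientations lies in $I$. A flip alters a single simple pair without touching the doubled or missing pair, while $\sigma\in S_n$ permutes the pair structure by $\{a,b\}\mapsto\{\sigma(a),\sigma(b)\}$. Hence the weak equivalence orbit of $I$ is controlled by the $S_n$-orbit of the ordered pair $(\{i,j\},\{k,l\})$, followed by arbitrary flips on the simple positions. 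Since $\{i,j\}\ne\{k,l\}$, the only $S_n$-invariant of such an ordered pair of two-element subsets is $|\{i,j\}\cap\{k,l\}|\in\{0,1\}$. The reference pattern in (a) has doubled pair $\{1,3\}$ and missing pair $\{1,2\}$ (intersection of size $1$); the reference in (b) has doubled pair $\{3,4\}$ and missing pair $\{1,2\}$ (intersection of size $0$). So I first apply a permutation to normalize the doubled and missing pairs of $I$, then apply flips to place each simple pair into its $\NE_n$-orientation, which proves both weak equivalence statements.

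For the inner product in (a), write $\chi_I=(x_3-x_1)P$ where $P=\chi_n/(x_1-x_2)=\prod_{(i,j)\in\NE_n\setminus\{(1,2)\}}(x_i-x_j)$, and let $\tau=(1\ 2)\in S_n$. Then $\tau$ fixes $P$, because the factors of $P$ containing $x_1$ or $x_2$ are the $2(n-2)$ factors $x_1-x_k$ and $x_2-x_k$ with $k\ge 3$, which $\tau$ permutes in pairs, while all other factors of $P$ involve neither $x_1$ nor $x_2$. Hence $\tau\chi_I=(x_3-x_2)P$ and therefore $\chi_I-\tau\chi_I=(x_2-x_1)P=-\chi_n$. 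Since $\chi_n$ is antisymmetric under $\tau$, we have $\la\tau f,\chi_n\ra=-\la f,\chi_n\ra$ for every $f$, so $\la\chi_I-\tau\chi_I,\chi_n\ra=2\la\chi_I,\chi_n\ra$. Combining these gives $\la\chi_I,\chi_n\ra=-\la\chi_n,\chi_n\ra/2=-n!/2$. Any $I$ in the class differs from the reference by flips and an element of $S_n$, each of which multiplies $\chi_I$ by $\pm1$, so the inner product with $\chi_n$ equals $\pm n!/2$ for every such $I$.

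For (b), the same construction gives $\chi_I=(x_4-x_3)P$ with the same $P$, and now $\tau$ also fixes the new factor $(x_4-x_3)$, so $\tau\chi_I=\chi_I$. Antisymmetry of $\chi_n$ under $\tau$ then forces $\la\chi_I,\chi_n\ra=-\la\chi_I,\chi_n\ra$, hence $\la\chi_I,\chi_n\ra=0$. By Proposition \ref{FundPol} the reference pattern is $n$-singular, and since weak equivalence preserves nonsingularity (noted in Section \ref{Ekv}), every $I$ in this class is $n$-singular. The main substantive step is the identity $\tau P=P$: it is what makes both inner product computations collapse to a one-line symmetry argument, while the reduction to normal forms and the sign behaviour of $\chi_I$ under flips and $S_n$ are bookkeeping.
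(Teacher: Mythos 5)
Your proof is correct, and while the weak‑equivalence reduction is essentially the paper's (normalize the doubled and missing pairs by a permutation, then flip the simple pairs into $\NE_n$-orientation), the analytic part is genuinely different. For part (a) the paper invokes Proposition~\ref{Pros-Stav} — splitting the reference pattern as a $3\times3$ block $\{(1,3),(2,3),(3,1)\}$ glued to $\NE_{n-3}$ — which in turn rests on the Weyl‑group identity of Theorem~\ref{KompGr}; you instead observe that $P=\chi_n/(x_1-x_2)$ is fixed by the transposition $\tau=(1\,2)$, so $\chi_I-\tau\chi_I=-\chi_n$, and the antisymmetry of $\chi_n$ gives $\la\chi_I,\chi_n\ra=-n!/2$ in two lines. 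For part (b) the paper argues via defectiveness of the reference pattern (hence non‑universality by Proposition~\ref{RestrDim}, hence singularity by Theorem~\ref{TeorAD}); you get $\la\chi_I,\chi_n\ra=0$ directly from $\tau\chi_I=\chi_I$ and antisymmetry of $\chi_n$, then quote Proposition~\ref{FundPol}. Your route is more elementary and self‑contained — it avoids both the block‑product proposition and the defectiveness machinery, at the cost of requiring the explicit factorization $\chi_I=(x_3-x_1)P$ resp.\ $(x_4-x_3)P$ of the normal form, which the reduction step hands you for free. Both approaches correctly use that weak equivalence preserves nonsingularity to transport the conclusion from the normal form back to arbitrary $I$ in the class.
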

\begin{proof}
(a) We have, say, $i=k$. Let $J=\sig(I)$, where $\sig\in S_n$
is chosen so that $\sig(i)=1,$ $\sig(j)=3$ and $\sig(l)=2$. For each
$(r,s)\in J$ with $r>s$ and $(r,s)\ne(3,1)$ we apply a flip to
replace $(r,s)$ with $(s,r)$. We obtain the desired pattern.
Proposition \ref{Pros-Stav} gives the formula for the inner product.

(b) The equivalence assertion is proved in the same manner as in (a)
and, since this new pattern is defective, $I$ must be singular.
\end{proof}

We conclude this section by providing some numerical data
about the equivalence classes in $\pP'_n$ for $2\le n\le 5$.
For each $n$, Table 1 gives the cardinality of $\pP'_n$, the
number of equivalence classes, and the number of nonsingular,
defective, and exceptional classes in that order. The
last column records the number of weak equivalence classes.

\begin{center}
\begin{tabular}{crrrrrr}
&\\
\multicolumn{6}{c}
{\bf  Table 1: Equivalence classes} \\
&\\
$n$ & $|\pP'_n|$ & Equ. & Nons. & Def. & Exc. & Weak \\
&\\ \hline &\\
2 & 2 & 1 & 1 & 0 & 0 & 1 \\
3 & 20 & 3 & 3 & 0 & 0 & 2 \\
4 & 928 & 30 & 19 & 4 & 7 & 12 \\
5 & 184956 & 880 & 619 & 66 & 195 & 110 \\
\end{tabular}
\end{center}
For any $n$ we have $|\pP'_n|=\binom{2\mu_n}{\mu_n}$, but
a formula for the number of (weak) equivalence classes is not known.

\begin{problem} \label{Equ-Cl}
Find a formula for the number of (weak) equivalence classes in $\pP'_n$.
\end{problem}

\section{An analogue of Schur's theorem} \label{Analog}

Recall the patterns $\Lambda_n\in\pP'_n$, $n\ge1$, defined in the
Introduction. The zero entries required by $\Lambda_n$ for
$n=2,3,\ldots,7$ are exhibited below:
\begin{equation*}
\left[ \begin{array}{cc}
* & 0 \\ * & *  \end{array} \right],\quad
\left[ \begin{array}{ccc}
* & 0 & 0 \\ * & * & * \\ 0 & * & * \end{array} \right],\quad
\left[ \begin{array}{cccc}
* & 0 & * & 0 \\ 0 & * & 0 & * \\ * & 0 & * & * \\ 0 & * & * & *
\end{array} \right],\quad
\left[ \begin{array}{ccccc}
* & 0 & 0 & * & 0 \\ 0 & * & 0 & 0 & * \\ 0 & 0 & * & * & * \\
* & 0 & * & * & * \\ 0 & * & * & * & *
\end{array} \right],
\end{equation*}
\begin{equation*}
\left[ \begin{array}{cccccc}
* & 0 & 0 & 0 & * & 0 \\ 0 & * & 0 & 0 & 0 & * \\
0 & 0 & * & 0 & * & * \\ 0 & 0 & * & * & * & * \\
* & 0 & * & * & * & * \\ 0 & * & * & * & * & *
\end{array} \right],\quad
\left[ \begin{array}{ccccccc}
* & 0 & 0 & 0 & 0 & * & 0 \\ 0 & * & 0 & 0 & 0 & 0 & * \\
0 & 0 & * & 0 & 0 & * & * \\ 0 & 0 & * & * & * & * & * \\
0 & 0 & 0 & * & * & * & * \\ * & 0 & * & * & * & * & * \\
0 & * & * & * & * & * & *
\end{array} \right].
\end{equation*}
The maximum of $\nu(I)$ over all $I\in\pP'_n$ is
$[\mu_n/2]$. Let us write $n=4m+r$ where $m$ is a nonnegative integer
and $r\in\{0,1,2,3\}.$ Note that $\Lambda_n$ is symmetric for
$r\in\{0,1\}$, and otherwise there is a unique $(i,j)\in\Lambda_n$,
namely $(2m+1,2m+2)$, such that $(j,i)\notin\Lambda_n$. Hence
$\Lambda_n$ has the maximal complexity $[\mu_n/2]$.
Our objective in this section is to prove that $\Lambda_n$ is
nonsingular. As $\Lambda_n$ is a (necessary) minor modification of the
triangular pattern $\NW_n$, we consider this result as an analogue
of Schur's theorem.

\begin{theorem} \label{SchurAnalog}
We have $ \la \chi_{\Lambda_n}, \chi_n \ra = (-1)^s n!/2^s,$ where
$s=[(n+1)/4]$. In particular, $\Lambda_n$ is universal.
\end{theorem}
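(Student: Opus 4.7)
By Proposition \ref{FundPol} and Theorem \ref{TeorAD}, the entire theorem reduces to verifying the explicit inner-product formula $\la\chi_{\Lambda_n},\chi_n\ra=(-1)^s n!/2^s$, which is manifestly nonzero. My plan is to prove this evaluation by induction on $n$.

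A convenient reformulation: since the sign representation of $S_n$ occurs with multiplicity one in the homogeneous component of $\bR[x_1,\dots,x_n]$ of degree $\mu_n=\deg\chi_{\Lambda_n}$, the antisymmetrization $\frac{1}{n!}\sum_{\sig\in S_n}\sgn(\sig)\,\sig\cdot\chi_{\Lambda_n}$ must equal $c_n\chi_n$ for some scalar $c_n$, and a short calculation (using that permutations act by their sign on $\chi_n$ and preserve the inner product) gives $\la\chi_{\Lambda_n},\chi_n\ra=c_n\cdot n!$. Thus it suffices to show $c_n=(-1)^s/2^s$. Since $s=[(n+1)/4]$ jumps only at $n\equiv 3\pmod 4$, my aim is to establish the single-step identities $c_{n+1}/c_n=1$ when $n\not\equiv 2\pmod 4$ and $c_{n+1}/c_n=-1/2$ when $n\equiv 2\pmod 4$, starting from the trivial base case $n=1$ where $\chi_{\Lambda_1}=1$ forces $c_1=1$. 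To produce the inductive step I would analyse carefully how the linear factors of $\chi_{\Lambda_{n+1}}$ differ from those of $\chi_{\Lambda_n}$ (taking into account that the passage $n\to n+1$ adjusts the cutoff $i+j\le n+1$ and may introduce or delete one of the anti-diagonal exceptional positions), and then combine this information either with the general inner-product identity of Theorem \ref{KompGr}, using a symmetric $h$ tailored to absorb the repeated anti-diagonal pairs that produce the squared factors $(x_i-x_j)^2$ in $\chi_{\Lambda_n}$, or with the block-decomposition identity of Proposition \ref{Pros-Stav}, applied after a suitable permutation of indices.

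The main obstacle is the inductive step itself. The restriction $\Lambda_{n+1}\cap(\bZ_n\times\bZ_n)$ does \emph{not} equal $\Lambda_n$, so no naive recursion applies; instead I expect a delicate rearrangement argument, supplemented by a controlled sequence of flips and equivalence operations, will be needed to compare the two polynomials. Moreover, the definition of $\Lambda_n$ depends on the residue $r=n\bmod 4$, so the recursion naturally splits into four sub-cases. The transition $r=2\to r=3$ is the most sensitive, since the additional removed position $(2m+2,2m+1)$ breaks the transpositional symmetry of $\Lambda_n$ that is enjoyed for $r\in\{0,1\}$; this is precisely the transition that produces the factor $-1/2$ in the final formula, while the other three single-step ratios should be clean integers depending only on $n$. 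Carrying out this case-by-case analysis, and in particular establishing rigorously the appearance of the $-1/2$ at the $r=2\to r=3$ transition, is what I expect to occupy the bulk of the section.
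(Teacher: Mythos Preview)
What you have written is a plan, not a proof: none of the inductive steps is actually carried out, and the two tools you name do not obviously apply. Proposition \ref{Pros-Stav} requires the pattern to split as $I\cup((n,n)+J)$ together with a \emph{full} rectangular block $(0,n)+\bZ_n\times\bZ_{m-n}$; the passage $\Lambda_n\to\Lambda_{n+1}$ does not produce such a block, even after permuting indices. Theorem \ref{KompGr} needs a $W$-invariant polynomial $h$ with $h\chi$ equal to one side of the comparison; there is no evident $h$ relating $\chi_{\Lambda_{n+1}}$ to $\chi_{\Lambda_n}$ in this way, precisely because (as you yourself note) $\Lambda_{n+1}\cap(\bZ_n\times\bZ_n)\ne\Lambda_n$. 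Saying that a ``delicate rearrangement argument, supplemented by a controlled sequence of flips'' will handle this is not a proof --- flips change the sign of $\chi_I$ and hence of $\la\chi_I,\chi_n\ra$, but they cannot produce the factor $1/2$ you need at the $r=2\to r=3$ transition.

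The paper's approach is structurally different and avoids all of this. It does \emph{not} induct $n\to n+1$ but $n\to n+4$, after first passing to an equivalent pattern $\Lambda'_n=\sig^{-1}(\Lambda_n)$ designed so that $\Lambda'_n=J_n\cup\bigl((2,2)+\Lambda'_{n-4}\bigr)$, where $J_n\subset\{1,2,n-1,n\}\times\bZ_n\cup\bZ_n\times\{1,2,n-1,n\}$ is an explicit ``frame'' pattern. The real work is then a direct computation of $\pd_{J_n}\chi_n$ as a constant times $\tau^2\chi_{n-4}$ (Lemmas \ref{L:1}--\ref{L:2}), using the identification $\la f,\chi_n\ra=\pd_f\chi_n/\prod_k k!$ and congruences modulo $\pK(n)$ (Lemma \ref{Kongr}). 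This yields the clean recursion $\la\chi_{\Lambda'_n},\chi_n\ra=\tfrac12 n(n-1)(n-2)(n-3)\la\chi_{\Lambda'_{n-4}},\chi_{n-4}\ra$, from which the formula follows at once. The differential-operator viewpoint and the step-$4$ recursion are the key ideas you are missing.
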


For the proof we need four lemmas and the following three facts which
follow immediately from \cite[Chapter III, Lemma 3.9]{He}:

(i) $x_i^k\in\pK(n)$ for $k\ge n$, $1\le i\le n$;

(ii) If $f,g\in\bR[x_1,x_2,\ldots,x_n]$ and $f\equiv g \pmod{\pK(n)}$
then
\begin{equation} \label{fg-hi}
\pd_f\chi_n=\pd_g\chi_n;
\end{equation}

(iii) If $f\in\bR[x_1,x_2,\ldots,x_n]_{\mu_n}$  then
\begin{equation} \label{df-hi}
\pd_f\chi_n=\prod_{k=1}^{n-1}k!\cdot\la f,\chi_n \ra.
\end{equation}

By diferentiating the formula (\ref{PotpSim}), we obtain that
\begin{equation} \label{IzvPot}
\pd_i h_{k,n}=\sum_{d_1+\cdots+d_n=k-1} (1+d_i)x_1^{d_1}x_2^{d_2}\cdots
x_n^{d_n}, \quad i\le n.
\end{equation}

We prove first the following congruence.

\begin{lemma}\label{Kongr}
For $1\le r\le m\le n$ we have
$$
\prod_{m<i\le n} (x_r-x_i) \equiv \pd_r h_{n-m+1,m} \pmod{\pK(n)}.
$$
\end{lemma}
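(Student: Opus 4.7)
The plan is to prove the congruence by first rewriting $\pd_r h_{n-m+1,m}$ as an honest polynomial in $\bR[x_1,\ldots,x_n]$ via a generating-function recursion, and only then reducing modulo $\pK(n)$. This order of operations is essential: differentiation does not preserve the ideal $\pK(n)$, and indeed we shall see that $h_{n-m+1,m}\equiv 0\pmod{\pK(n)}$, so any attempt to reduce before differentiating collapses the formula to $0$.

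The proof rests on two ingredients. First, from $\sum_{k\ge 0}h_{k,m}t^k=\prod_{i=1}^{m}(1-x_it)^{-1}$, applying $\pd_r$ for $r\le m$ produces the extra factor $t/(1-x_rt)=\sum_{j\ge 1}x_r^{j-1}t^j$, and matching coefficients yields
$$\pd_r h_{k,m}=\sum_{j=1}^{k} x_r^{j-1}\,h_{k-j,m},\qquad k\ge 1$$
(this can equally well be read off from \eqref{IzvPot}). Second, the defining relation $\prod_{i=1}^{n}(1-x_it)\equiv 1\pmod{\pK(n)[t]}$ lets us invert the factor $\prod_{i=1}^{m}(1-x_it)$ in $\bR[x_1,\ldots,x_n][[t]]$, giving the power-series congruence
$$\prod_{i=m+1}^{n}(1-x_it)\equiv\sum_{k\ge 0}h_{k,m}t^k\pmod{\pK(n)[[t]]}.$$
Reading off the coefficient of $t^k$ yields the key congruence
$$h_{k,m}\equiv(-1)^k\sig_{k,n-m}(x_{m+1},\ldots,x_n)\pmod{\pK(n)}$$
for all $k\ge 0$, with the right-hand side taken to be $0$ when $k>n-m$.

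Combining these, the recursion at $k=n-m+1$ (after re-indexing $i=n-m+1-j$) gives
$$\pd_r h_{n-m+1,m}=\sum_{i=0}^{n-m}x_r^{n-m-i}h_{i,m},$$
and substituting the congruence produces
$$\pd_r h_{n-m+1,m}\equiv\sum_{i=0}^{n-m}(-1)^i x_r^{n-m-i}\sig_{i,n-m}(x_{m+1},\ldots,x_n)\pmod{\pK(n)}.$$
By Vieta's formula the right-hand side is exactly $\prod_{m<i\le n}(x_r-x_i)$, completing the proof. The only real subtlety is the one flagged above—keeping the differentiation outside the reduction—after which the computation is mechanical.
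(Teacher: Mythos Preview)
Your proof is correct. Both you and the paper use the generating function $\sum_k h_{k,m}t^k=\prod_{i\le m}(1-x_it)^{-1}$ together with $\prod_{i\le n}(1-x_it)\equiv 1\pmod{\pK(n)[t]}$, but the two arguments are organized differently. You first establish the exact polynomial identity $\pd_r h_{k,m}=\sum_{j=1}^k x_r^{j-1}h_{k-j,m}$, then reduce each $h_{i,m}$ modulo $\pK(n)$ to $(-1)^i\sig_i(x_{m+1},\ldots,x_n)$, and finally recognise the Vieta expansion of $\prod_{i>m}(x_r-x_i)$. The paper instead introduces a placeholder variable $t$ for $x_r$, expands $(t-x_1)\prod_{i>m}(t-x_i)$ via a two-variable Laurent-series trick to get the exact identity $\prod_{i>m}(x_1-x_i)=\sum_{k}(-1)^k\sig_{k,n}\,\pd_1 h_{n-m+1-k,m}$, and only then kills the $k\ge 1$ terms modulo $\pK(n)$. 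Your route is more direct and avoids the auxiliary variable; the paper's route has the minor bonus of yielding an exact formula with the $\sig_{k,n}$'s displayed explicitly before reduction. Your cautionary remark about not reducing before differentiating is well taken and makes the logic clear.
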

\begin{proof}
Without any loss of generality we may assume that $r=1$.
Let $s,t$ be two additional commuting indeterminates. Observe that
for any polynomial $f(t)$, with coefficients in $\bR[x_1,\ldots,x_n]$,
the constant term of
$$ F(s,t)=f(s^{-1})\cdot\frac{1}{1-st}, $$
when expanded into a formal Laurent series with respect to $s$,
is equal to $f(t)$. For
$$ f(t)=(t-x_1)\cdot\prod_{m<i\le n} (t-x_i) $$
we have
\begin{eqnarray*}
F(s,t) &=& \frac{\prod_{i=1}^n (1-sx_i)}
{s^{n-m+1} (1-st) \prod_{i=2}^m (1-sx_i)} \\
&=& \frac{1}{s^{n-m+1}}\cdot\sum_{k=0}^n (-s)^k \sig_{k,n}\cdot
\sum_{l=0}^\infty h_{l,m}(t,x_2,\ldots,x_m)s^l.
\end{eqnarray*}
Hence
$$ f(t)=\sum_{k=0}^{n-m+1} (-1)^k \sig_{k,n} h_{n-m+1-k,m}
(t,x_2,\ldots,x_m). $$
By evaluating the partial derivative with respect to $t$ at the
point $t=x_1$, we obtain that
$$ \prod_{m<i\le n} (x_1-x_i) = \sum_{k=0}^{n-m+1} (-1)^k \sig_{k,n}
\partial_1 h_{n-m+1-k,m}(x_1,x_2,\ldots,x_m) $$
and the assertion of the lemma follows.
\end{proof}

Recall the endomorphism $\tau$ defined in the beginning of
Section \ref{Nonsing-Obl}.

\begin{lemma} \label{L:1}
For $P=\pd_1 h_{2,n-1}(x_1,x_n)$, i.e.,
$$P=\sum_{k=0}^{n-2}(n-1-k)x_1^{n-2-k}x_n^k,\quad n\ge4,$$
we have
\begin{equation}\label{E:1}
\partial_P^2\chi_n=(-1)^nn!(n-2)!\left(\frac{n-1}{2}x_1+
\frac{n-3}{2}x_n-\sum_{k=2}^{n-1}x_k\right) \tau\chi_{n-2}.
\end{equation}
\end{lemma}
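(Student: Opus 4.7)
The plan is to first replace $P$ by a much simpler polynomial modulo $\pK(n)$, and then exploit the $S_{n-2}$-symmetry of the resulting expression to reduce the computation to determining three numerical constants.

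First, I would apply Lemma~\ref{Kongr} with $r=1$ and $m=2$, giving
$$(x_1-x_3)(x_1-x_4)\cdots(x_1-x_n)\equiv\partial_1 h_{n-1,2}(x_1,x_2)\pmod{\pK(n)}.$$
Since $\pK(n)$ is generated by the symmetric polynomials $\sigma_{k,n}$ and is therefore $S_n$-invariant, applying the transposition $(2,n)$ to both sides yields $P\equiv Q\pmod{\pK(n)}$, where $Q:=\prod_{i=2}^{n-1}(x_1-x_i)$. Consequently $P^2\equiv Q^2\pmod{\pK(n)}$, and by~(\ref{fg-hi}) we conclude $\partial_P^2\chi_n=\partial_{Q^2}\chi_n$.

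Next, because $Q^2$ is symmetric in $x_2,\ldots,x_{n-1}$, the operator $\partial_{Q^2}$ commutes with the $S_{n-2}$-action on these variables. Since $\chi_n$ is alternating in $x_2,\ldots,x_{n-1}$, so is $\partial_{Q^2}\chi_n$, and hence it is divisible by $\tau\chi_{n-2}$. A degree count (both sides have degree $(n^2-5n+8)/2$) shows the quotient is a linear homogeneous polynomial symmetric in $x_2,\ldots,x_{n-1}$, so
$$\partial_{Q^2}\chi_n=\bigl(Ax_1+Bx_n+C(x_2+\cdots+x_{n-1})\bigr)\,\tau\chi_{n-2}$$
for constants $A,B,C$ depending only on $n$.

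To pin down these constants I would differentiate. Applying $\partial_1$ gives $\partial_{x_1 Q^2}\chi_n=A\,\tau\chi_{n-2}$ (as $\tau\chi_{n-2}$ is free of $x_1$). Applying $\partial_{\tau\chi_{n-2}}$ to this and invoking~(\ref{df-hi})---valid because $x_1 Q^2\tau\chi_{n-2}$ has degree $\mu_n$---converts the identity into
$$A\cdot\textstyle\prod_{k=1}^{n-2}k!=\prod_{k=1}^{n-1}k!\cdot\langle x_1 Q^2\tau\chi_{n-2},\chi_n\rangle,$$
so $A=(n-1)!\,\langle x_1 Q^2\tau\chi_{n-2},\chi_n\rangle$. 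Analogous manipulations give $B=(n-1)!\langle x_n Q^2\tau\chi_{n-2},\chi_n\rangle$, and, using that $\sum_{k=2}^{n-1}\partial_k\tau\chi_{n-2}=0$ by translation invariance of a polynomial in the differences, $(n-2)C=(n-1)!\langle (x_2+\cdots+x_{n-1})Q^2\tau\chi_{n-2},\chi_n\rangle$.

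The three inner products are then evaluated using the factorizations $Q\,\tau\chi_{n-2}=\chi_{n-1}(x_1,\ldots,x_{n-1})$ and $\chi_n=\chi_{n-1}(x_1,\ldots,x_{n-1})\prod_{i=1}^{n-1}(x_i-x_n)$, which reduce each pairing to a constant-term computation on an explicit Laurent polynomial. The main obstacle is this final evaluation: while mechanical, producing exactly the coefficients $\tfrac{n-1}{2}$, $\tfrac{n-3}{2}$, $-1$ (scaled by $(-1)^n n!(n-2)!$) requires careful bookkeeping. An attractive alternative would be to extract the coefficients of $\partial_{Q^2}\chi_n$ directly in a well-chosen monomial basis, bypassing the inner products.
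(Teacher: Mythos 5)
Your reduction $P\equiv Q:=\prod_{i=2}^{n-1}(x_1-x_i)\pmod{\pK(n)}$ (via Lemma~\ref{Kongr} with $r=1$, $m=2$ followed by the transposition $(2,n)$) is correct and is a genuinely different starting point from the paper, which instead reduces $P^2$ directly modulo $\pK(n)$ by using $x_1^n,x_n^n\in\pK(n)$ to collapse it to the three monomials $x_1^{n-3}x_n^{n-1}$, $x_1^{n-2}x_n^{n-2}$, $x_1^{n-1}x_n^{n-3}$ with explicit coefficients $a,b,c$. Your symmetry argument forcing $\partial_{Q^2}\chi_n=(Ax_1+Bx_n+C\sum_{k=2}^{n-1}x_k)\,\tau\chi_{n-2}$ is sound (alternation in $x_2,\ldots,x_{n-1}$ plus the degree count), as is the conversion of $A$, $B$, $(n-2)C$ into $(n-1)!$ times inner products via $\partial_1$, $\partial_n$, $\sum_{k=2}^{n-1}\partial_k$ and the identities (\ref{fg-hi}), (\ref{df-hi}); I checked these give the right numbers for $n=4$.

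However, what you have written is a reduction, not a proof: the three inner products $\la x_1Q^2\tau\chi_{n-2},\chi_n\ra$, $\la x_nQ^2\tau\chi_{n-2},\chi_n\ra$, and $\la(\sum_{k=2}^{n-1}x_k)Q^2\tau\chi_{n-2},\chi_n\ra$ still have to be evaluated, and that evaluation is precisely where the specific constants $\tfrac{n-1}{2}$, $\tfrac{n-3}{2}$, $-1$ (and the sign $(-1)^n$) come from. You flag this yourself and offer the factorizations $Q\,\tau\chi_{n-2}=\chi_{n-1}$ and $\chi_n=\chi_{n-1}\prod_{i=1}^{n-1}(x_i-x_n)$, but you do not carry the computation through, and it is not obviously lighter than what the paper does. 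By contrast, the paper's route avoids the inner products entirely: after collapsing $P^2$ to three monomials, it applies $a\partial_1^{n-3}\partial_n^{n-1}+b\partial_1^{n-2}\partial_n^{n-2}+c\partial_1^{n-1}\partial_n^{n-3}$ to only the $(x_1,x_n)$-leading part of $\chi_n$ (the rest is annihilated) and reads off the answer directly. So: correct framing, genuinely different decomposition, but the crucial constant-extraction step is missing, and that is the substance of the lemma.
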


\begin{proof}
Since $x_1^n,x_n^n\in\pK(n)$, we have
$$P^2 \equiv ax_1^{n-3}x_n^{n-1}+bx_1^{n-2}x_n^{n-2}+cx_1^{n-1}x_n^{n-3},
\pmod{\pK(n)},$$
where
\begin{align*}
a &=\sum_{k=1}^{n-2}k(n-1-k)=n(n-1)(n-2)/6, \\
b &=\sum_{k=1}^{n-1}k(n-k)=n(n-1)(n+1)/6, \\
c &=\sum_{k=2}^{n-1}k(n+1-k)=n(n+5)(n-2)/6.
\end{align*}
By using the property (\ref{fg-hi}), we obtain that
$$\pd_P^2\chi_n=(a\pd_1^{n-3}\pd_n^{n-1}+b\pd_1^{n-2}\pd_n^{n-2}
+c\pd_1^{n-1}\pd_n^{n-3})\chi_n.$$
If we omit from $\chi_n$ the terms $\pm x_1^{d_1}\cdots x_n^{d_n}$ with
$d_1+d_n<2n-4$, we obtain the polynomial $(-1)^n Q \tau\chi_{n-2}$ where
$$ Q=(x_1^{n-1}x_n^{n-2}-x_1^{n-2}x_n^{n-1})
-(x_1^{n-1}x_n^{n-3}-x_1^{n-3}x_n^{n-1})\sum_{k=2}^{n-1}x_k.
$$
Since the omitted terms are killed by $\pd_P^2$, we have
$\pd_P^2\chi_n=(-1)^n C \tau\chi_{n-2}$ where
$$ C=(n-1)!\left[ (n-2)!((b-a)x_1+(c-b)x_n)
-(n-3)!(c-a)\sum_{k=2}^{n-1}x_k \right]. $$
It remains to plug in the values of $a,b$ and $c$.
\end{proof}

The next lemma gives another important identity.

\begin{lemma}\label{L:2}
Let $J_n=J'_n\cup (J'_n)^T$, $n\ge4$, where $J'_n$ is the union of
$\{(1,2),(1,n-1),(2,n)\}$ and $\{1,2\}\times\{3,4,\ldots,n-2\}$.
Then
$$\pd_{J_n}\chi_n=\frac{1}{2}n!(n-1)!(n-2)!(n-3)!\tau^2\chi_{n-4}.$$
\end{lemma}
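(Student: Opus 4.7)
The plan is to parallel the proof of Lemma \ref{L:1}: reduce $\chi_{J_n'}$ modulo $\pK(n)$ via Lemma \ref{Kongr}, then apply the resulting differential operator to $\chi_n$ in two commuting stages. Factoring $\chi_{J_n'}=\prod_{j=2}^{n-1}(x_1-x_j)\cdot\prod_{j\in\{3,\ldots,n-2,n\}}(x_2-x_j)$, I would invoke Lemma \ref{Kongr} with $(r,m)=(1,2)$ combined with the transposition $(2,n)\in S_n$ to obtain $\prod_{j=2}^{n-1}(x_1-x_j)\equiv P\pmod{\pK(n)}$, where $P=\pd_1 h_{n-1,2}(x_1,x_n)$ is precisely the polynomial of Lemma \ref{L:1}; and Lemma \ref{Kongr} with $(r,m)=(2,3)$ combined with the transposition $(3,n-1)\in S_n$ to obtain $\prod_{j\in\{3,\ldots,n-2,n\}}(x_2-x_j)\equiv Q\pmod{\pK(n)}$ with $Q=\pd_2 h_{n-2,3}(x_1,x_2,x_{n-1})$. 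Since $\chi_{J_n}=-\chi_{J_n'}^2\equiv -P^2 Q^2\pmod{\pK(n)}$ and the operators $\pd_P^2,\pd_Q^2$ commute, \eqref{fg-hi} gives $\pd_{J_n}\chi_n=-\pd_P^2\,\pd_Q^2\chi_n$.

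Applying $\pd_P^2$ first via Lemma \ref{L:1} produces $\pd_P^2\chi_n=(-1)^n n!(n-2)!\,L\,\tau\chi_{n-2}$ with $L=\tfrac{n-1}{2}x_1+\tfrac{n-3}{2}x_n-\sum_{k=2}^{n-1}x_k$. Since $Q$ involves only $x_1,x_2,x_{n-1}$ and $\tau\chi_{n-2}$ only $x_2,\ldots,x_{n-1}$, the operator $\pd_Q$ acts on $\tau\chi_{n-2}$ through $\bar Q:=Q|_{x_1=0}=\pd_2 h_{n-2,2}(x_2,x_{n-1})$, and Leibniz on the linear $L$ yields
\[
\pd_Q^2(L\,\tau\chi_{n-2})=L\,\pd_{\bar Q}^2\tau\chi_{n-2}+2\sum_{i\in\{1,2,n-1\}}(\pd_i L)\,\pd_{\bar Q\,Q_i}\tau\chi_{n-2},
\]
where $Q_i:=(\pd_i Q)|_{x_1=0}$; a short calculation gives $Q_1=\pd_2 h_{n-3,2}(x_2,x_{n-1})$, $Q_2=\pd_2\bar Q$, $Q_{n-1}=\pd_{n-1}\bar Q$. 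Working modulo the ideal $\pK(n-2)\subset\bR[x_2,\ldots,x_{n-1}]$ generated by the elementary symmetric polynomials in $x_2,\ldots,x_{n-1}$, the constraint $x_i^{n-2}\in\pK(n-2)$ collapses $\bar Q^2$ to a scalar multiple of $x_2^{n-3}x_{n-1}^{n-3}$ and each $\bar Q\,Q_i$ to a combination of $x_2^{n-3}x_{n-1}^{n-4}$ and $x_2^{n-4}x_{n-1}^{n-3}$ alone.

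The crucial vanishing $\pd_{\bar Q}^2\tau\chi_{n-2}=0$ follows because $\pd_2^{n-3}\tau\chi_{n-2}=(n-3)!\,\tau^2\chi_{n-3}(x_3,\ldots,x_{n-1})$ has $x_{n-1}$-degree only $n-4<n-3$; this kills the $L\,\pd_{\bar Q}^2\tau\chi_{n-2}$ term and thereby eliminates all residual $x_1,x_n$-dependence from the answer. Each remaining product evaluates to $\pd_{\bar Q\,Q_i}\tau\chi_{n-2}=(-1)^n(n-3)!(n-4)!(\alpha_i-\beta_i)\,\tau^2\chi_{n-4}$, and a direct binomial computation gives $\alpha_i-\beta_i=\tfrac{(n-2)(n-3)}{2},\,\tfrac{(n-1)(n-2)(n-3)}{3},\,\tfrac{(n-1)(n-2)(n-3)}{6}$ for $i=1,2,n-1$ respectively; combining with the Leibniz coefficients $(2\pd_1 L,2\pd_2 L,2\pd_{n-1} L)=(n-1,-2,-2)$ and the identity $\tfrac12-\tfrac23-\tfrac13=-\tfrac12$ collapses the linear combination to $-\tfrac12(n-1)(n-2)(n-3)$, and assembling with the prefactor $(-1)^n n!(n-2)!$ from Lemma \ref{L:1} and the outer sign $-1$ produces the claimed $\tfrac12\,n!(n-1)!(n-2)!(n-3)!\,\tau^2\chi_{n-4}$. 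The main obstacle is the three binomial evaluations of $\alpha_i-\beta_i$; once the key vanishing is observed, the rest is arithmetic.
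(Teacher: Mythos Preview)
Your proof is correct and follows essentially the same approach as the paper: both factor $\chi_{J_n}\equiv -P^2R^2\pmod{\pK(n)}$ via Lemma~\ref{Kongr}, apply $\pd_P^2$ using Lemma~\ref{L:1}, and then reduce the remaining operator $\pd_R^2$ acting on $L\,\tau\chi_{n-2}$ by degree considerations in $x_1,x_2,x_{n-1}$. The only organizational difference is that you invoke the Leibniz rule on the linear factor $L$ (producing three terms indexed by $i\in\{1,2,n-1\}$), whereas the paper truncates $F$ and $\pd_R^2$ directly to the surviving pieces $F'$ and $D_1^2+2\pd_1 D_1 D_2$; your $\bar Q,Q_1$ are exactly the paper's $D_1,D_2$, and your $i=2,n-1$ contributions regroup into the paper's single $D_1^2$ term.
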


\begin{proof}
By using Lemma \ref{Kongr} we obtain the congruence
\begin{align*}
\chi_{J_n} =& -\left(\prod_{i=2}^{n-1}(x_1-x_i)\right)^2\left((x_2-x_n)
\prod_{i=3}^{n-2}(x_2-x_i)\right)^2 \\
\equiv & -P^2R^2 \pmod{\pK(n)},
\end{align*}
where $P$ is defined as in the previous lemma and
$$R=\pd_2 h_{n-2,3}(x_1,x_2,x_{n-1})=
\sum_{d_1+d_2+d_3=n-3}(1+d_2)x_1^{d_1}x_2^{d_2}x_{n-1}^{d_3}.$$
In view of the formula (\ref{E:1}), it suffices to prove that
\begin{equation}\label{E:2}
\pd_R^2F=\frac{(-1)^{n-1}}{2}(n-1)!(n-3)!\tau^2\chi_{n-4},
\end{equation}
where
$$F=\left(\frac{n-1}{2}x_1+\frac{n-3}{2}x_n-
\sum_{k=2}^{n-1}x_k\right)\tau\chi_{n-2}.$$

Since $\deg(R^2)=2n-6$, we need only consider the terms
in $F$ for which the sum of the exponents of $x_1, x_2$ and
$x_{n-1}$ is at least $2n-6$. Their sum is
$$F'=(-1)^n(\frac{n-1}{2}x_1-x_2-x_{n-1})
(x_2^{n-3}x_{n-1}^{n-4}-x_2^{n-4}x_{n-1}^{n-3})
\tau^2\chi_{n-4}.$$
Note also that the exponents of $x_1$ in $F'$ are $\le1$. So we need
only consider the terms in $\pd_R^2$ for which the exponent
of $\pd_1$ is 0 or 1. Their sum is $D_1^2+2\pd_1D_1D_2$ where
$$D_1=\sum_{k=0}^{n-3}(n-2-k)\pd_2^{n-3-k}\pd_{n-1}^k,$$
$$D_2=\sum_{k=0}^{n-4}(n-3-k)\pd_2^{n-4-k}\pd_{n-1}^k.$$
After expanding the products $D_1D_2$ and $D_1^2$:
\begin{eqnarray*}
D_1D_2 &=& a_1\pd_2^{n-3}\pd_{n-1}^{n-4}+b_1\pd_2^{n-4}\pd_{n-1}^{n-3}+\ldots, \\
D_1^2 &=& a_2\pd_2^{n-2}\pd_{n-1}^{n-4}+b_2\pd_2^{n-4}\pd_{n-1}^{n-2}+\ldots,
\end{eqnarray*}
the exhibited coefficients can be easily computed:
\begin{eqnarray*}
a_1 &=& \sum_{k=1}^{n-3}k(n-1-k)=(n-2)(n-3)(n+2)/6, \\
a_2 &=& \sum_{k=2}^{n-2}k(n-k)=(n-1)(n-3)(n+4)/6, \\
b_1=b_2 &=& \sum_{k=1}^{n-3}k(n-2-k)=(n-1)(n-2)(n-3)/6.
\end{eqnarray*}
Thus
\begin{eqnarray*}
D_1D_2(x_2^{n-3}x_{n-1}^{n-4}-x_2^{n-4}x_{n-1}^{n-3}) &=&
(n-3)!(n-4)!(a_1-b_1) \\
&=& (n-2)!(n-3)!/2, \\
D_1^2(x_2^{n-2}x_{n-1}^{n-4}-x_2^{n-4}x_{n-1}^{n-2}) &=&
(n-2)!(n-4)!(a_2-b_2) \\
&=& (n-1)!(n-3)!.
\end{eqnarray*}
Hence
\begin{eqnarray*}
\pd_R^2F &=& (D_1^2+2\pd_1D_1D_2)F'\\
&=& (-1)^n \left[ (n-1)D_1D_2(x_2^{n-3}x_{n-1}^{n-4}-x_2^{n-4}x_{n-1}^{n-3}) \right. \\
& & \quad \left. -D_1^2(x_2^{n-2}x_{n-1}^{n-4}-x_2^{n-4}x_{n-1}^{n-2}) \right]
\tau^2\chi_{n-4},
\end{eqnarray*}
which completes the proof.
\end{proof}

The fourth lemma follows easily from the previous one.
\begin{lemma}\label{L:3}
Let $I=J_n \cup \left( (2,2)+I' \right)$, $n\ge4$, where $J_n$ is defined
as in the previous lemma and $I'\in\pP'_{n-4}$ is arbitrary. Then
$$ \la \chi_I, \chi_n \ra = \frac{1}{2}n(n-1)(n-2)(n-3)\la \chi_{I'},
\chi_{n-4} \ra. $$
\end{lemma}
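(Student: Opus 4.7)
The plan is to exploit the fact that $I$ splits as the disjoint union of $J_n$ and the translate $(2,2)+I'$, so that $\chi_I=\chi_{J_n}\cdot\chi_{(2,2)+I'}$. Consequently $\pd_I=\pd_{J_n}\pd_{(2,2)+I'}$, since the assignment $f\mapsto\pd_f$ is a homomorphism from polynomials to commuting differential operators. Because $|I|=\mu_n$, formula (\ref{df-hi}) converts the inner product $\la\chi_I,\chi_n\ra$ into a constant multiple of $\pd_I\chi_n$, which I shall evaluate by applying the two factors in succession. The disjointness of the two parts of $I$ is immediate: $J_n$ touches only positions with at least one coordinate in $\{1,2,n-1,n\}$, whereas $(2,2)+I'\subseteq\{3,\ldots,n-2\}\times\{3,\ldots,n-2\}$.

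First, by Lemma \ref{L:2},
$$\pd_{J_n}\chi_n=\tfrac{1}{2}n!(n-1)!(n-2)!(n-3)!\,\tau^2\chi_{n-4}.$$
The remaining operator $\pd_{(2,2)+I'}=\prod_{(i,j)\in I'}(\pd_{i+2}-\pd_{j+2})$ involves only $\pd_3,\ldots,\pd_{n-2}$, and $\tau^2\chi_{n-4}$ is a polynomial in $x_3,\ldots,x_{n-2}$ alone. Introducing the shifted variables $y_k=x_{k+2}$ for $k=1,\ldots,n-4$, the operator becomes exactly $\pd_{I'}$ (in the $y$-variables) acting on $\chi_{n-4}(y_1,\ldots,y_{n-4})$. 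Since $I'\in\pP'_{n-4}$ and $\chi_{I'}$ is homogeneous of degree $\mu_{n-4}$, applying formula (\ref{df-hi}) in $n-4$ variables gives the scalar
$$\pd_{(2,2)+I'}\,\tau^2\chi_{n-4}=\prod_{k=1}^{n-5}k!\cdot\la\chi_{I'},\chi_{n-4}\ra.$$

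Combining the two computations yields
$$\pd_I\chi_n=\tfrac{1}{2}n!(n-1)!(n-2)!(n-3)!\prod_{k=1}^{n-5}k!\cdot\la\chi_{I'},\chi_{n-4}\ra,$$
while (\ref{df-hi}) applied to $\chi_I$ in $n$ variables gives $\pd_I\chi_n=\prod_{k=1}^{n-1}k!\cdot\la\chi_I,\chi_n\ra$. Equating the two expressions and cancelling via the identity $\prod_{k=1}^{n-1}k!=(n-4)!(n-3)!(n-2)!(n-1)!\prod_{k=1}^{n-5}k!$ reduces the coefficient to $n!/\bigl(2(n-4)!\bigr)=n(n-1)(n-2)(n-3)/2$, which is exactly the claim. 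There is no real obstacle here: all of the genuine combinatorial work is already absorbed into Lemma \ref{L:2}, and the only things to verify are the multiplicativity of the $\pd$-operators, the disjointness of the two pieces of $I$, and the compatibility of the shift operator $\tau^2$ with the translation $(2,2)+I'$.
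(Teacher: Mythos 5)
Your proof is correct and follows essentially the same route as the paper: factor $\chi_I=\chi_{J_n}\cdot\tau^2\chi_{I'}$, use the multiplicativity of $f\mapsto\pd_f$ together with (\ref{df-hi}) to turn both inner products into derivative evaluations, plug in Lemma \ref{L:2} for $\pd_{J_n}\chi_n$, apply (\ref{df-hi}) again in the shifted $n-4$ variables, and cancel factorials. The only cosmetic difference is that the paper writes the translated pattern's polynomial as $\tau^2\chi_{I'}$ and keeps the $\tau^2$ notation throughout, whereas you spell out the change of variables $y_k=x_{k+2}$, but the substance is identical.
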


\begin{proof}
Since $\chi_{I}=\chi_{J_n} \tau^2 \chi_{I'}$, we have
$\pd_I=\pd_{J_n}\pd_{\tau^2 \chi_{I'}}$.
By using (\ref{df-hi}) and Lemma \ref{L:2}, we obtain that
\begin{eqnarray*}
\la\chi_I,\chi_n\ra &=& \frac{\pd_I\chi_n}{\prod_{k=1}^{n-1}k!} \\
&=& \frac {n(n-1)(n-2)(n-3) \tau^2 \left( \pd_{I'} \chi_{n-4} \right) }
 {2\prod_{k=1}^{n-5}k!} \\
&=& \frac{1}{2}n(n-1)(n-2)(n-3)\la \chi_{I'}, \chi_{n-4} \ra.
\end{eqnarray*}
\end{proof}

We can now prove the theorem itself.
\begin{proof}
We construct the patterns $\Lambda'_n$ inductively as
$\Lambda'_0=\Lambda'_1=\emptyset$, $\Lambda'_2=\{(1,2)\}$,
$\Lambda'_3=\{(2,1),(2,3),(3,2)\}$, and
$\Lambda'_n=J_n\cup((2,2)+\Lambda'_{n-4})$ for $n\geq4$. We claim
that $\la \chi_{\Lambda'_n}, \chi_n \ra =n!/2^s$.

We prove the claim by induction on $n$. It is straightforward to
verify the claim for $n=0,1,2,3$. For $n\ge 4$, by
Lemma \ref{L:3} and the induction hypothesis, we have
\begin{eqnarray*}
\la \chi_{\Lambda'_n}, \chi_n \ra &=& \frac{1}{2}n(n-1)(n-2)(n-3)
\la \chi_{\Lambda'_{n-4}}, \chi_{n-4} \ra \\
&=& \frac{1}{2}n(n-1)(n-2)(n-3)\cdot\frac{(n-4)!}{2^{s-1}}=\frac{n!}{2^s}.
\end{eqnarray*}
It remains to observe that $\sig(\Lambda'_n)=\Lambda_n$, where
$$ \sig=\prod_{k=1}^s(2k-1,2k) \in S_n. $$
\end{proof}

\section{A remarkable family of nonsingular patterns} \label{Analogue}

It is a challenging problem to construct an infinite family
of new nonsingular doubly laced patterns.
The main result of this section gives a construction of such a family.
It includes, as a special case, a new analogue of Schur's theorem
namely another modification of the triangular pattern $\NW_n$
(see Proposition \ref{Primer-2}).

We first introduce the notation that we need to state and prove our theorem.
Let $\sig\in S_n$ and let ${\bf r}=(r_1,r_2,\ldots,r_{n-1})$ be a sequence
with $r_k\in\sig(\bZ_k)$ for all $k\in\bZ_{n-1}$.
For $k\in\bZ_{n-1}$ we set
$$ I_k=\{(r_k,\sig(j)): k<j\le n \}. $$
As usual, $\de_{ij}$ will denote the Kronecker delta symbol.

The main technical tool is the following lemma.
\begin{lemma} \label{glavna}
Under the above hypotheses, for $0\le m<n$ we have
\begin{eqnarray*}
&& \pd_{I_1}\pd_{I_2}\cdots\pd_{I_m}\chi_n= \\
&& \quad \sgn(\sig)\left(\prod_{k=1}^m
(n-k+\de_{r_k,\sig(k)})! \right) \chi_{n-m}(x_{\sig(m+1)},\ldots,x_{\sig(n)}).
\end{eqnarray*}
\end{lemma}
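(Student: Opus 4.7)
The plan is to prove the formula by induction on $m$. The base case $m=0$ reduces to $\chi_n(x_1,\ldots,x_n) = \sgn(\sig)\,\chi_n(x_{\sig(1)},\ldots,x_{\sig(n)})$, which is just the antisymmetry of the Vandermonde polynomial under permutation of its arguments. For the inductive step, I would apply $\pd_{I_m}$ to the case $m-1$ of the formula, reducing the problem to establishing the single identity
\[
\pd_{I_m}\chi_N(x_{\sig(m)}, x_{\sig(m+1)}, \ldots, x_{\sig(n)}) = (N-1+\de_{r_m,\sig(m)})!\,\chi_{N-1}(x_{\sig(m+1)},\ldots,x_{\sig(n)}),
\]
where $N = n-m+1$. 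This single identity will be the heart of the proof.

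Since $r_m \in \sig(\bZ_m)$, either $r_m = \sig(m)$ (Case 1) or $r_m = \sig(i)$ for some $i<m$ (Case 2). In Case 2, the polynomial $\chi_N(x_{\sig(m)},\ldots,x_{\sig(n)})$ is independent of $x_{\sig(i)} = x_{r_m}$, so $\pd_{r_m}$ annihilates it; expanding $\pd_{I_m} = \prod_{j=m+1}^n(\pd_{r_m} - \pd_{\sig(j)})$, only the term with no factor of $\pd_{r_m}$ contributes, and $\pd_{I_m}$ acts on this polynomial as $(-1)^{n-m}\prod_{j=m+1}^n\pd_{\sig(j)}$. In both cases, after the substitution $z_i = x_{\sig(m+i-1)}$, the operator applied to $\chi_N(z_1,\ldots,z_N)$ is invariant under permutations of $z_2,\ldots,z_N$, while $\chi_N$ is antisymmetric in these variables; hence the result is antisymmetric in $z_2,\ldots,z_N$ and therefore divisible inside $\bR[z_1,\ldots,z_N]$ by $\chi_{N-1}(z_2,\ldots,z_N)$. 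A degree count shows that the total degree of the result equals $\mu_{N-1}$, matching that of $\chi_{N-1}$, so the quotient must be a scalar $C$.

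The main obstacle will be evaluating this scalar. For Case 1 the operator coincides with $\pd_P$, where $P = \prod_{j=2}^N(z_1-z_j)$, and the factorization $P\cdot\chi_{N-1}(z_2,\ldots,z_N)=\chi_N$ gives
\[
\pd_{\chi_{N-1}}\pd_P\chi_N = \pd_{\chi_N}\chi_N.
\]
By (\ref{df-hi}) the right side equals $\prod_{k=1}^{N-1}k!\cdot N!$, while applying $\pd_{\chi_{N-1}}$ to $C\chi_{N-1}$ produces $C\prod_{k=1}^{N-2}k!\cdot(N-1)!$; equating yields $C = N! = (N-1+1)!$, as required. For Case 2, the analogous argument uses $g=z_2z_3\cdots z_N$. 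Expanding $\chi_N=\sum_{T\subseteq\{2,\ldots,N\}}(-1)^{|T|}z_1^{N-1-|T|}\prod_{j\in T}z_j\cdot\chi_{N-1}(z_2,\ldots,z_N)$, the inner product $\la g\chi_{N-1},\chi_N\ra$ picks up only the $T=\{2,\ldots,N\}$ term (the only summand without $z_1$), yielding $(-1)^{N-1}\|g\chi_{N-1}\|^2 = (-1)^{N-1}(N-1)!$. Identity (\ref{df-hi}) then gives $\pd_g\chi_N = (-1)^{N-1}(N-1)!\,\chi_{N-1}$, and combining with the sign $(-1)^{n-m}=(-1)^{N-1}$ extracted above produces the desired factor $(N-1)! = (N-1+0)!$, completing the induction.
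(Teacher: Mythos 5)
Your proof is correct, and it takes a genuinely different route from the paper. The paper's inductive step invokes Lemma \ref{Kongr} to replace $\chi_{I_m}$ modulo the ideal $\pK(n)$ by $\pd_{r_m}h_{n-m+1,m}(x_{\sig(1)},\ldots,x_{\sig(m)})$, then expands this as a sum of monomial differential operators and observes that exactly one term survives when applied to $\chi_{n-m+1}(x_{\sig(m)},\ldots,x_{\sig(n)})$ -- a single computation that handles both of your cases at once and does not need to distinguish $r_m=\sig(m)$ from $r_m=\sig(i)$, $i<m$. Your version avoids the congruence machinery entirely: you note that in Case 2 the operator $\pd_{r_m}$ acts trivially, reducing $\pd_{I_m}$ to $\pm\pd_{z_2}\cdots\pd_{z_N}$; in both cases you observe that $\pd_{I_m}\chi_N$ is antisymmetric in $z_2,\ldots,z_N$ and homogeneous of degree $\mu_{N-1}$, hence a scalar multiple of $\chi_{N-1}(z_2,\ldots,z_N)$, and then you pin down the scalar by hitting it with $\pd_{\chi_{N-1}}$ and invoking (\ref{df-hi}). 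The trade-off is that your argument is more self-contained -- it does not depend on Lemma \ref{Kongr}, using only basic properties of the operators $\pd_f$ and the inner product -- at the cost of a case split, while the paper's route shares Lemma \ref{Kongr} as a common tool with the proof of Lemma \ref{L:2} and dispatches the scalar evaluation in a single uniform formula. Both are valid; the one subtle point your write-up glosses over (and the paper does too) is that (\ref{fg-hi}) and (\ref{df-hi}) may be applied to $\chi_{n-m+1}(x_{\sig(m)},\ldots,x_{\sig(n)})$ because, by the induction hypothesis, it is a nonzero scalar multiple of a partial derivative of $\chi_n$ and hence harmonic; your argument sidesteps this by working entirely inside $\bR[z_2,\ldots,z_N]$ with a Vandermonde in its own right, so it is actually cleaner on this score.
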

\begin{proof}
We use induction on $m$. The assertion obviously holds for $m=0$.
Assume that $m>0$. By the induction hypothesis we have
\begin{eqnarray*}
&& \pd_{I_1}\pd_{I_2}\cdots\pd_{I_m}\chi_n= \\
&& \quad \sgn(\sig)\left(\prod_{k=1}^{m-1}(n-k+\de_{r_k,\sig(k)})!\right)
\cdot\pd_{I_m}\chi_{n-m+1}(x_{\sig(m)},\ldots,x_{\sig(n)}).
\end{eqnarray*}
By Lemma \ref{Kongr} we have
\begin{eqnarray*}
\chi_{I_m} &=& \prod_{m<i\le n} (x_{r_m}-x_{\sig(i)}) \\
&\equiv& \pd_{r_m} h_{n-m+1,m}(x_{\sig(1)},\ldots,x_{\sig(m)})
\pmod{\pK(n)}.
\end{eqnarray*}
By invoking the property (\ref{fg-hi}) and the identity (\ref{df-hi}),
we deduce that
\begin{eqnarray*}
&& \pd_{I_m}\chi_{n-m+1}(x_{\sig(m)},\ldots,x_{\sig(n)})= \\
&& \quad \sum_{d_1+\cdots+d_m=n-m} (1+d_{r_m}) \pd_{\sig(1)}^{d_1} \cdots
\pd_{\sig(m)}^{d_m} \chi_{n-m+1}(x_{\sig(m)},\ldots,x_{\sig(n)}).
\end{eqnarray*}
All terms in this sum vanish except the one for $d_1=\cdots=d_{m-1}=0$
and $d_m=n-m$. We infer that
\begin{eqnarray*}
&& \pd_{I_m}\chi_{n-m+1}(x_{\sig(m)},\ldots,x_{\sig(n)})= \\
&& \quad \left(1+(n-m)\de_{r_m,\sig(m)} \right)\pd_{\sig(m)}^{n-m}
\chi_{n-m+1}(x_{\sig(m)},\ldots,x_{\sig(n)}).
\end{eqnarray*}
Since
$$ \pd_{\sig(m)}^{n-m} \chi_{n-m+1}(x_{\sig(m)},\ldots,x_{\sig(n)})=
(n-m)!\chi_{n-m}(x_{\sig(m+1)},\ldots,x_{\sig(n)}), $$
we are done.
\end{proof}

Let us fix a permutation $\sig\in S_n$ and let
${\bf i}=(i_1,i_2,\ldots,i_{n-1})$ be a sequence
of distinct integers such that
$|i_k|\in\sig(\bZ_k)$ for all $k\in\bZ_{n-1}$. Next we set
$$ (i_k,\sig(j))^+=\begin{cases}
(i_k,\sig(j)) & \text{if } i_k>0; \\
(\sig(j),-i_k) & \text{otherwise}.
\end{cases} $$
With these data at hand, we construct the strict pattern
\begin{equation} \label{Obl-J}
J(\sig,{\bf i})=\{(i_k,\sig(j))^+:1\le k<j\le n\}.
\end{equation}

We claim that the conditions imposed on {\bf i} imply that the map
sending $(k,j)\to(i_k,\sig(j))^+$ for $k<j$ is injective, i.e., that
$|J(\sig,{\bf i})|=\mu_n$. Indeed, assume that two different pairs
$(k,j)$ and $(r,s)$, with $k<j$ and $r<s$, have the same image, i.e.,
$(i_k,\sig(j))^+=(i_r,\sig(s))^+$.
Clearly, we must have $k\ne r$ and $i_k i_r<0$. Say, $k<r$.
Then $|i_k|=\sig(s)\notin\sig(\bZ_r)$, which contradicts the condition
$|i_k|\in\sig(\bZ_k)$. This proves our claim, and so we have
$J(\sig,{\bf i})\in\pP'_n$.

Let $\pJ_n\subseteq\pP'_n$ be the set of all patterns $J(\sig,{\bf i})$.
For any $n$, let $\iota\in S_n$ be the identity permutation.
There are exactly $(n!)^2$ choices for the ordered pairs
$(\sig,{\bf i})$. However the corresponding patterns $J(\sig,{\bf i})$
are not all distinct. For instance, if $n=2$ we have
$J(\sig,(2))=J(\iota,(-1))$ with $\sig=(1,2)$.

The following is the main result of this section.

\begin{theorem} \label{Fam-J}
For $J=J(\sig,{\bf i})$ we have
$$\la \chi_J,\chi_n \ra =(-1)^d\sgn(\sig)\prod_{k:|i_k|=\sig(k)}(n-k+1), $$
where
$$ d=\sum_{k \;:\; i_k<0}(n-k). $$
\end{theorem}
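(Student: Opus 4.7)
The plan is to reduce the computation directly to Lemma \ref{glavna}. Setting $r_k=|i_k|$, I would first observe that each factor $\chi_{(i_k,\sig(j))^+}$ of $\chi_J$ equals $x_{r_k}-x_{\sig(j)}$ when $i_k>0$ and equals $-(x_{r_k}-x_{\sig(j)})$ when $i_k<0$. Since each index $k$ with $i_k<0$ contributes a sign flip for every $j\in\{k+1,\ldots,n\}$, a total of $n-k$ factors, this gives
$$ \chi_J=(-1)^d\prod_{k=1}^{n-1}\chi_{I_k}, $$
where $I_k=\{(r_k,\sig(j)):k<j\le n\}$ is precisely the family appearing in Lemma \ref{glavna} (the hypothesis $r_k=|i_k|\in\sig(\bZ_k)$ is satisfied by assumption).

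Consequently $\pd_J\chi_n=(-1)^d\pd_{I_1}\pd_{I_2}\cdots\pd_{I_{n-1}}\chi_n$. Applying Lemma \ref{glavna} with $m=n-1$, and noting that $\chi_1(x_{\sig(n)})=1$ (the empty product over ordered pairs in $\bZ_1$), I obtain
$$ \pd_J\chi_n=(-1)^d\sgn(\sig)\prod_{k=1}^{n-1}(n-k+\de_{r_k,\sig(k)})!. $$
Splitting off the indices at which $r_k=\sig(k)$ rewrites this product as
$$ \prod_{j=1}^{n-1}j!\cdot\prod_{k:\,r_k=\sig(k)}(n-k+1). $$

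Since $|J|=\mu_n$ (checked just before the theorem), $\chi_J$ is homogeneous of degree $\mu_n$, so identity (\ref{df-hi}) applies and yields
$$ \la\chi_J,\chi_n\ra=\frac{\pd_J\chi_n}{\prod_{k=1}^{n-1}k!}=(-1)^d\sgn(\sig)\prod_{k:\,|i_k|=\sig(k)}(n-k+1), $$
which is the stated formula. There is no substantive obstacle: Lemma \ref{glavna} already packages the main inductive computation, and all that remains is correctly accounting for the sign $(-1)^d$ arising from transposed positions and correctly handling the Kronecker $\de_{r_k,\sig(k)}$ contribution. If any care is needed it is in verifying that the lemma is applied with the right sequence of operators despite the fact that the $r_k$ need not be distinct (which is allowed in Lemma \ref{glavna}, since it acts on the product polynomial $\prod_k\chi_{I_k}$ rather than the underlying set of positions).
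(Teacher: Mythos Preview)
Your proposal is correct and follows essentially the same approach as the paper's own proof: set $r_k=|i_k|$, apply Lemma \ref{glavna} with $m=n-1$, use identity (\ref{df-hi}), and account for the sign $(-1)^d$ coming from the transposed positions. The only difference is cosmetic ordering---the paper first applies (\ref{df-hi}) to $\chi_{I_1}\cdots\chi_{I_{n-1}}$ and then relates it to $\chi_J$, whereas you first relate $\chi_J$ to the product and then apply (\ref{df-hi}).
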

\begin{proof}
We apply the lemma with ${\bf r}=(|i_1|,|i_2|,\ldots,|i_{n-1}|)$
and $m=n-1$. Thus we now have
$I_k=\{(|i_k|,\sig(j)): k<j\le n \}$ for all $k\in\bZ_{n-1}$.
Since $\chi_1=1$, the lemma gives
$$ \pd_{I_1}\pd_{I_2}\cdots\pd_{I_{n-1}}\chi_n=
\sgn(\sig)\prod_{k=1}^{n-1}(n-k+\de_{|i_k|,\sig(k)})!. $$
By (\ref{df-hi}) we have
$$
\pd_{I_1}\pd_{I_2}\cdots\pd_{I_{n-1}}\chi_n=\prod_{k=1}^{n-1}(n-k)!\cdot
\la \chi_{I_1}\chi_{I_2}\cdots\chi_{I_{n-1}}, \chi_n \ra.
$$
Observe that $J$ is the disjoint union of the
$I_k$'s with $i_k>0$ and the $I_k^T$'s with $i_k<0$.
Therefore we have $\chi_J=(-1)^d \chi_{I_1}\chi_{I_2}\cdots\chi_{I_{n-1}}$
and the assertion follows.
\end{proof}

Let us give an example.

\begin{example} \label{Primer-1}
Let $n>1$ and let $\sig\in S_n$ be the identity. We set $i_1=-1$ and
$i_k=k-1$ for $1<k<n$. The $i_k$'s are distinct and the condition $|i_k|\in\bZ_k$
is satisfied for all $k$. In this case we obtain the pattern
$$ J=\{(i,j):1\le i<j-1\le n-1\} \cup \{(i,1):1<i\le n\}. $$
Only $i_1$ is negative and so $d=n-1$, and $|i_k|=\sig(k)=k$ is valid
only for $k=1$.
By the theorem we have $\la \chi_J,\chi_n \ra=(-1)^{n-1}n.$
This proves the case $k=1$ of Conjecture \ref{Hess}.
\end{example}

Recall that $\NW_n$ is not $n$-universal for $n\ge3$
(see Example \ref{dij+1}). However, if we
modify this pattern to make it strict by replacing its diagonal
positions $(i,i)$ with $(i,n+1-i)$, we can show that the new pattern
$$ \Pi_n=\{(i,j):i+j\le n, i\ne j\} \cup \{(i,n-i+1):2i\le n\} $$
is nonsingular and, consequently, universal.
This is our second analogue of Schur's theorem.
The zero entries required by $\Pi_n$ for
$n=2,3,4,5$ are exhibited below:
\begin{equation*}
\left[ \begin{array}{cc}
* & 0 \\ * & *  \end{array} \right],\quad
\left[ \begin{array}{ccc}
* & 0 & 0 \\ 0 & * & * \\ * & * & * \end{array} \right],\quad
\left[ \begin{array}{cccc}
* & 0 & 0 & 0 \\ 0 & * & 0 & * \\ 0 & * & * & * \\ * & * & * & *
\end{array} \right],\quad
\left[ \begin{array}{ccccc}
* & 0 & 0 & 0 & 0 \\ 0 & * & 0 & 0 & * \\ 0 & 0 & * & * & * \\
0 & * & * & * & * \\ * & * & * & * & *
\end{array} \right].
\end{equation*}

\begin{proposition} \label{Primer-2}
We have $\la \chi_{\Pi_n}, \chi_n \ra=n!!.$
\end{proposition}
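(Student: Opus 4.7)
The plan is to exhibit $\Pi_n$ explicitly as one of the patterns $J(\sigma,\mathbf{i})$ constructed in Theorem \ref{Fam-J}, and then read off $\la \chi_{\Pi_n},\chi_n\ra$ from that theorem. I would take the permutation
\[
\sigma(2t-1)=t,\qquad \sigma(2t)=n-t+1,
\]
so that in one-line notation $\sigma=(1,n,2,n-1,3,n-2,\ldots)$, and the sequence
\[
i_{2t-1}=t,\qquad i_{2t}=-t
\]
of length $n-1$. The $i_k$'s are pairwise distinct integers, and $|i_k|=\lceil k/2\rceil\in\sigma(\bZ_k)$, so the data $(\sigma,\mathbf{i})$ is admissible for the construction of $J(\sigma,\mathbf{i})$.

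The first step is to verify that $J(\sigma,\mathbf{i})=\Pi_n$. From the definition of $J(\sigma,\mathbf{i})$, the positions contributed by an odd index $k=2t-1$ form the horizontal strip $\{t\}\times\{t+1,\ldots,n-t+1\}$ (since $i_k=t>0$), while those contributed by an even index $k=2t$ form the vertical strip $\{t+1,\ldots,n-t\}\times\{t\}$ (since $i_k=-t<0$). I would then check that these $n-1$ strips exhaust $\Pi_n$: every $(i,j)\in\Pi_n$ with $i<j$ satisfies $j\le n-i+1$ (by either $i+j\le n$ or $j=n-i+1$), so it lies in the row strip for $t=i$; every $(i,j)\in\Pi_n$ with $i>j$ satisfies $i\le n-j$ (which is forced since the case $j=n-i+1$ would give $i<j$), so it lies in the column strip for $t=j$. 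Conversely, every position in a strip obeys the defining inequalities of $\Pi_n$. Disjointness is immediate because row strips lie strictly above the diagonal and column strips strictly below.

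The second step is to apply Theorem \ref{Fam-J} to this particular $J(\sigma,\mathbf{i})$. The equality $|i_k|=\sigma(k)$ holds for every odd $k=2t-1$ (both sides equal $t$) and for no even $k=2t$ in range, because $t=n-t+1$ forces $2t=n+1>n-1\ge k$. Consequently
\[
\prod_{k:\,|i_k|=\sigma(k)}(n-k+1)=\prod_{t=1}^{\lfloor n/2\rfloor}(n-2t+2),
\]
which is $(2m)!!=n!!$ for $n=2m$ and $(2m+1)!!=n!!$ for $n=2m+1$. A direct calculation yields $d=\sum_{k:\,i_k<0}(n-k)=m(m-1)$ for $n=2m$ and $d=m^2$ for $n=2m+1$, while grouping inversions of $\sigma$ by parity of position shows $\mathrm{inv}(\sigma)=\binom{\lceil n/2\rceil}{2}+\binom{\lfloor n/2\rfloor}{2}$, giving $\sgn(\sigma)=1$ in the even case and $(-1)^m$ in the odd case. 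In both parities $(-1)^d\sgn(\sigma)=1$, so Theorem \ref{Fam-J} delivers $\la \chi_{\Pi_n},\chi_n\ra=n!!$.

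The main obstacle is the combinatorial bookkeeping in the first step; once the partition of $\Pi_n$ into row and column strips is verified, everything else reduces to parity arithmetic involving $d$, $\sgn(\sigma)$, and the double factorial.
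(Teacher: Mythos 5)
Your proof is correct and takes essentially the same route as the paper: both choose $\sigma=(1,n,2,n-1,\ldots)$ with $\mathbf{i}=(1,-1,2,-2,\ldots)$, identify $\Pi_n$ with $J(\sigma,\mathbf{i})$, and apply Theorem \ref{Fam-J}. You simply supply more of the bookkeeping (the strip decomposition and the inversion count for $\sgn(\sigma)$) that the paper leaves to the reader with ``one can easily verify.''
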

\begin{proof}
This is in fact a special case of Theorem \ref{Fam-J}.
We take $\sig\in S_n$ to be the permutation $1,n,2,n-1,\ldots.$
Thus $\sig(2k-1)=k$ for $2k-1\le n$ and $\sig(2k)=n+1-k$ for $2k\le n$.
We set ${\bf i}=(1,-1,2,-2,\ldots).$ The $i_k$'s are distinct. As
$i_{2k-1}=-i_{2k}=k=\sig(2k-1)$, the condition
$|i_k|\in\sig(\bZ_k)$ is satisfied for all $k$'s.
With this $\sig$ and {\bf i} we have $\Pi_n=J(\sig,{\bf i})$.
The equality $|i_k|=\sig(k)$ holds iff $k$ is odd and the inequality
$i_k<0$ holds iff $k$ is even. Thus $d=\sum(n-2k)$, the sum being over
all positive integers $k$ such that $2k\le n$. Therefore $d$
is even for $n$ even and $d\equiv [n/2] \pmod{2}$ for $n$ odd.
One can easily verify that $\sgn(\sig)=(-1)^d$ in all cases.
Hence, we obtain the formula given in the proposition.
\end{proof}

In several cases we used \cite{NS} to identify various sequences that
we have encountered, such as the double factorial sequence $A006882$
in the above proposition.

Let $J=J(\sig,{\bf i})$ be the pattern (\ref{Obl-J}).
Note that $J^T=J(\sig,-{\bf i})$,
where $-{\bf i}=(-i_1,-i_2,\ldots,-i_{n-1})$.
If $\rho\in S_n$ it is easy to verify that
$\rho(J)=J(\rho\sig,{\bf j})$ where ${\bf j}=(j_1,j_2,\ldots,j_{n-1})$
with
$$ j_k=\begin{cases} \rho(i_k) & \text{if } i_k>0; \\
-\rho(-i_k) & \text{otherwise}.
\end{cases} $$
It follows that $\pJ_n$ is a union of equivalence classes.

\begin{problem} \label{oblici}
Determine the number of equivalence classes contained in $\pJ_n$.
\end{problem}

For $n=2,3,4,5$ the answers are $1,2,7,34$ respectively.

In the case when $\sig=\iota$, the sequence
${\bf i}=(i_1,i_2,\ldots,i_{n-1})$
is subject only to the conditions:
(a) the integers $i_k$ are pairwise distinct and
(b) $i_k\in\bZ_k$ for all $k\in\bZ_{n-1}$.
In particular $i_1=\pm1$. To simplify the
notation, in this case we set $J({\bf i})=J(\iota,{\bf i})$.
From the previous discussion it is clear that each equivalence class
contained in $\pJ_n$ has a representative of the form $J({\bf i})$
with $i_1=1$. However,  $J({\bf i})\approx J({\bf j})$ may hold
for two different sequences ${\bf i}$ and ${\bf j}$ with $i_1=j_1=1$.
For instance, for $n=3$ we have $J((1,2))\approx J((1,-2))$.

\section{The case $n=4$} \label{Cetiri}

In this section we fix $n=4$. Recall that the nonsingular
patterns are universal, and the defective ones are not.
In this section we shall exhibit a strict pattern which is singular
and universal (see Proposition \ref{Oblik 6} below).
There are $7$ exceptional equivalence classes in $\pP'_4$,
their representatives are listed in Table 2. The last column of the
table shows what is known about the universality of the pattern.

\begin{center}
\begin{tabular}{ccl}
&\\
\multicolumn{3}{c}
{\bf  Table 2: Exceptional classes for $n=4$} \\
&\\
No. & Representative pattern & Univ. \\
&\\ \hline &\\
1 & \{(1,2),(2,1),(1,3),(3,1),(2,3),(3,2)\} & No \\
2 & \{(1,2),(2,1),(1,3),(1,4),(2,4),(3,2)\} & No \\
3 & \{(1,2),(1,3),(1,4),(2,1),(3,4),(4,3)\} & Yes \\
4 & \{(1,2),(2,1),(1,3),(3,1),(2,4),(4,3)\} & ? \\
5 & \{(1,2),(2,1),(1,3),(2,3),(4,1),(4,2)\} & ? \\
6 & \{(1,2),(2,1),(1,4),(2,3),(3,1),(4,2)\} & ? \\
7 & \{(1,2),(2,1),(1,4),(3,1),(3,4),(4,3)\} & ? \\
\end{tabular}
\end{center}

\bigskip

We shall prove now that the first two patterns are not universal.

\begin{proposition} \label{Oblik 1}
The first pattern in Table 2 is not universal.
\end{proposition}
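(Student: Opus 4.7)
The plan is to exhibit an explicit $B \in L(4)$ and, by applying Cauchy's interlacing theorem to the self-commutator $[B^*,B] = B^*B - BB^*$, show that $B$ is not unitarily similar to any element of $L_I(4)$.

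Structural observation: for $A \in L_I(4)$, write $A = D + R$ where $D$ is diagonal and $R = e_4 a^T + b\,e_4^T$ (with $a,b \in \bC^4$, $a_4=b_4=0$) collects the entries in the fourth row and column. Since $D$ is diagonal, the cross-terms $D^*R$, $R^*D$, $DR^*$, $RD^*$ all have support in the fourth row and column, so the upper-left $3\times 3$ block of $[A^*,A]$ coincides with that of $R^*R - RR^*$. A direct calculation using $R^*R = \bar a\,a^T + \|b\|^2 e_4 e_4^T$ and $RR^* = \|a\|^2 e_4 e_4^T + bb^*$ identifies this block with $\bar a\,a^T - bb^*$ (now viewing $a,b \in \bC^3$). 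This is a difference of two rank-one positive semidefinite matrices, and the associated $2\times 2$ Gram determinant is $|a^T b|^2 - \|a\|^2\|b\|^2 \le 0$ by Cauchy--Schwarz. Hence its two nonzero eigenvalues have opposite signs, and in particular the upper-left $3\times 3$ block of $[A^*,A]$ has at most one negative eigenvalue.

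Now choose $B = E_{12} + 2E_{23} + 3E_{34} \in L(4)$. A direct computation gives $B^*B = \diag(0,1,4,9)$ and $BB^* = \diag(1,4,9,0)$, so $[B^*,B] = \diag(-1,-3,-5,9)$, whose eigenvalues sorted are $-5 < -3 < -1 < 9$. Suppose for contradiction that $B = UAU^*$ with $U \in \Un(4)$ and $A \in L_I(4)$. Then $[B^*,B] = U[A^*,A]U^*$, and in the orthonormal basis $\{Ue_1, Ue_2, Ue_3\}$ of the subspace $W := (Ue_4)^\perp$, the compression $P_W[B^*,B]P_W|_W$ has matrix equal to the upper-left $3\times 3$ block of $[A^*,A]$; by the structural observation it has at most one negative eigenvalue. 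However, Cauchy's interlacing theorem forces the eigenvalues $\mu_1 \le \mu_2 \le \mu_3$ of any $3$-dimensional compression of $[B^*,B]$ to satisfy $\mu_1 \in [-5,-3]$ and $\mu_2 \in [-3,-1]$, both strictly negative. This contradiction shows $B \notin \Un(4)\cdot L_I(4)$, so $I$ is not universal.

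The crux is the structural observation pinning down the form of the compression of $[A^*,A]$ together with its sharp signature restriction (at most one positive and one negative eigenvalue); once that is in hand, the final contradiction is an immediate application of Cauchy's interlacing inequality, and any upper-triangular $B = \alpha_1 E_{12} + \alpha_2 E_{23} + \alpha_3 E_{34}$ with strictly monotone $|\alpha_i|$ will serve equally well.
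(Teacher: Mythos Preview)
Your proof is correct and takes a genuinely different route from the paper's.

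The paper chooses the nilpotent matrix $E_{12}+E_{34}$ (rank~$2$, square zero) and shows by a short, direct case analysis on the entries of a putative $X\in L_I(4)$ that the conditions $\rank X=2$ and $X^2=0$ force all but the four corner entries of $X$ to vanish, contradicting $\rank X=2$. The argument is entirely elementary but tailored to this specific witness.

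Your argument is more structural. By writing $A=D+R$ with $R$ supported on the fourth row and column, you show that the upper-left $3\times3$ block of the self-commutator $[A^*,A]$ is always of the form $uu^*-vv^*$ and hence has at most one positive and at most one negative eigenvalue. You then pick $B=E_{12}+2E_{23}+3E_{34}$ so that $[B^*,B]=\diag(-1,-3,-5,9)$, and Cauchy interlacing forces any $3$-dimensional compression to have at least two negative eigenvalues, contradicting the signature restriction on the block corresponding to $(Ue_4)^\perp$.

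The trade-off: the paper's proof is shorter and uses nothing beyond rank and nilpotency. Your proof uses a unitary invariant ($[B^*,B]$) together with an inertia constraint that applies uniformly to \emph{all} of $L_I(4)$, not just to a single orbit; this is more conceptual and points to a reusable obstruction (the signature of principal blocks of the self-commutator) that could be tried on other patterns. As you note, any strictly bidiagonal $B$ with strictly monotone moduli would serve equally well.
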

\begin{proof}
Denote this pattern by $I$. Let $A\in L(4)$ be the matrix whose entries
in positions $(1,2)$ and $(3,4)$ are 1 and all other entries are 0.
Note that $A$ has rank 2 and that $A^2=0$.
Assume that $A$ is unitarily similar to some $X=[x_{ij}]\in L_I$.
Thus $X$ has the form
$$ X=\left[ \begin{array}{cccc}
* & 0 & 0 & * \\ 0 & * & 0 & * \\ 0 & 0 & * & * \\ * & * & * & *
\end{array} \right]. $$
Since $X$ must be nilpotent of rank 2, it is clear that at least one
of $x_{14}$, $x_{24}$, $x_{34}$ is nonzero, and also at least one
of $x_{41}$, $x_{42}$, $x_{43}$ is nonzero.
We may assume that $x_{14}\ne0$.
Then $X^2=0$ implies that $x_{42}=x_{43}=0$, and so $x_{41}\ne0$.
As $x_{22}$ and $x_{33}$ are eigenvalues of $X$,
we must have $x_{22}=x_{33}=0$.
From $X^2=0$ we deduce that $x_{24}=x_{34}=0$.
Thus only the four corner entries of $X$ may be nonzero.
As $X$ is nilpotent of rank 2, we have a contradiction.
\end{proof}

\begin{proposition} \label{Oblik 3}
The second pattern in Table 2 is not universal.
\end{proposition}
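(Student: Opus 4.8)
\emph{Proof strategy.} Write $I=\{(1,2),(2,1),(1,3),(1,4),(2,4),(3,2)\}$ for the second pattern of Table~2. The plan is to exhibit one matrix $A\in L(4)$ with $\pO_A\cap L_I=\emptyset$. First I would record the structure of $L_I$: if $X=[x_{ij}]\in L_I$ then the first row of $X$ is $(x_{11},0,0,0)$, so $e_1^TX=x_{11}e_1^T$; hence $x_{11}$ is an eigenvalue of $X$ and the hyperplane $V=e_1^\perp=\langle e_2,e_3,e_4\rangle$ is $X$-invariant, $X|_V$ having, in the basis $e_2,e_3,e_4$, the zero pattern $\{(1,3),(2,1)\}$ and trace $-x_{11}$. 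Thus any reduction of $A$ to $L_I$ produces a left eigenvector $e_1$ and a $3$-dimensional invariant subspace $V$, and we must arrange that $A$ admits no such configuration.

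Concretely, write $X=U^{*}AU$ with $U\in\Un(4)$ and put $u_j=Ue_j$, an orthonormal frame. Since $x_{ij}=\langle Au_j,u_i\rangle$, the six equations $x_{ij}=0$ for $(i,j)\in I$ say precisely that
$$u_1\perp Au_2,\quad u_1\perp Au_3,\quad u_1\perp Au_4,\quad u_2\perp Au_1,\quad u_2\perp Au_4,\quad u_3\perp Au_2 .$$
The first three amount to "$u_1^\perp$ is $A$-invariant", i.e. $u_1$ is an eigenvector of $A^*$. So $A\in\Un(4)\cdot L_I$ iff there is an eigenvector $u_1$ of $A^*$ and an orthonormal basis $u_2,u_3,u_4$ of $u_1^\perp$ with $u_2\perp Au_1$, $u_2\perp Au_4$ and $u_3\perp Au_2$; the task is to choose $A$ for which this incidence system is inconsistent.

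The delicate point — and the step I expect to be the main obstacle — is the choice of $A$, because the usual test matrices all fail. Every nilpotent $4\times4$ matrix is already unitarily similar to a member of $L_I$ (for instance $E_{12}+E_{34}$, $E_{12}+E_{23}$, $E_{12}$ and $E_{12}+E_{23}+E_{34}$ are unitarily similar to $E_{23}+E_{41}$, $E_{23}+E_{42}$, $E_{23}$, and $E_{23}+E_{31}+E_{42}$, all lying in $L_I$); every normal matrix lies in $\Un(4)\cdot L_I$ since all diagonal matrices do; and a dimension count ($\dim_{\bR}\pO_A=15$, $\dim_{\bR}L_I=18$, $\dim_{\bR}L(4)=30$) indicates that a generic orbit already meets $L_I$. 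Hence $A$ must be non-normal with coincident eigenvalues, so that $\dim\pO_A$ is small enough, and one must then check $\pO_A\cap L_I=\emptyset$ directly.

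To finish I would run the incidence analysis: $u_1$ ranges over the finitely many eigenlines of $A^*$; writing $Au_1=\alpha u_1+w$ with $w\in u_1^\perp$, the condition $u_2\perp Au_1$ confines $u_2$ to a circle in the plane $w^\perp\cap u_1^\perp$; the conditions $u_2\perp Au_4$ and $u_3\perp Au_2$ then pin down $u_4$ and $u_3$ in $u_2^\perp\cap u_1^\perp$, leaving a single one–variable identity whose unsolvability for the chosen $A$ gives the contradiction. Alternatively, and perhaps more in the spirit of Proposition~\ref{3-Obl}, one constructs a $\Un(4)$-invariant real polynomial $P$ on $L(4)$, expressed through the basic unitary invariants and computed with the aid of {\sc Maple}, which is nonnegative on $L_I$ but satisfies $P(A)<0$. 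Either way, the real work is the simultaneous bookkeeping of the invariant subspace $V$ and of the orthogonality incidences.
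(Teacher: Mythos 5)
Your setup is fine: translating $X=U^*AU\in L_I$ into the six orthogonality conditions $u_i\perp Au_j$, and noting that the zero first row forces $u_1$ to be an eigenvector of $A^*$, is exactly the bookkeeping the paper's proof uses. But the proposal stops at a strategy — no concrete matrix $A$ is produced and the "one-variable identity whose unsolvability gives the contradiction" is never exhibited — and, worse, the heuristic you use to guide the choice of $A$ contains a false claim that would steer you away from the matrices that actually work. You assert that \emph{every} nilpotent $4\times4$ matrix is unitarily similar to a member of $L_I$, justifying this by checking the Jordan canonical forms $E_{12}+E_{34}$, $E_{12}+E_{23}$, $E_{12}$, $E_{12}+E_{23}+E_{34}$. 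That only shows the Jordan forms themselves lie in $\Un(4)\cdot L_I$ (after a permutation); an arbitrary nilpotent matrix is merely \emph{similar}, not \emph{unitarily} similar, to its Jordan form, and unitary similarity classes of nilpotents are far finer than similarity classes. Indeed the paper's counterexample is precisely a nilpotent matrix of rank $3$,
$$A=\left[\begin{array}{rrrr}0&0&0&0\\ 0&-i&-i&0\\ 0&-1&1&1-i\\ 1&2&0&i-1\end{array}\right],$$
so it is similar to $E_{12}+E_{23}+E_{34}$ yet provably not unitarily similar to any element of $L_I$. Your conclusion that "$A$ must be non-normal with coincident eigenvalues" happens to be compatible with this, but the explicit exclusion of nilpotents is wrong and blocks the natural (and, as it turns out, correct) choice.

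The real content of the paper's argument is the part you defer: starting from $x_{11}=0$ (nilpotency) and the zero first row of both $A$ and $X$, one deduces $u_{12}=u_{13}=u_{14}=0$, hence $u_{21}=u_{31}=u_{41}=0$ and one may take $u_{11}=1$; then a case analysis on $u_{32}$ reduces the remaining conditions $x_{21}=x_{32}=x_{24}=0$ together with the requirement that $U^*Au_2$ be the second column of $X$ to two equations in auxiliary variables $\xi,\eta$, which force $\xi=\lambda(1-i)$ with $\lambda\ge0$ real and finally $(1-2\lambda+2\lambda^2)^2=0$, impossible over $\bR$. Without carrying out an analysis of this kind for a specific $A$ (or actually constructing the invariant polynomial $P$ you mention as an alternative, which you also do not do), the proposal does not establish non-universality; at present it only reformulates the problem and offers a search heuristic, part of which is incorrect.
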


\begin{proof}
Denote this pattern by $I$ and let
\begin{equation*}
A=\left[ \begin{array}{rrrr}
0&0&0&0 \\
0&-i&-i&0 \\
0&-1&1&1-i \\
1&2&0&i-1 \end{array} \right],
\end{equation*}
a nilpotent matrix of rank 3.
Assume that $AU=UX$ for some $X=[x_{ij}]\in L_I$ and some
$U=[u_{ij}]\in\Un(4)$. Thus $X$ has the form
$$ X=\left[ \begin{array}{cccc}
* & 0 & 0 & 0 \\ 0 & * & * & 0 \\ * & 0 & * & * \\ * & * & * & *
\end{array} \right]. $$
Let $u_k$ denote the $k$-th column of $U$. By equating the entries in
$X=U^*AU$, we obtain that $x_{ij}=u_i^*Au_j$ for all $i,j$.

Since $X$ is nilpotent, we must have $x_{11}=0$. The first row of $X$
is zero, and the other three rows must be linearly independent because
$X$ has rank 3. Since the first row of $UX=AU$ is zero,
we conclude that $u_{12}=u_{13}=u_{14}=0$.
As $U$ is unitary, we also have $u_{21}=u_{31}=u_{41}=0$.
Without any loss of generality, we may assume that $u_{11}=1$.

Since $u_2^*Au_1=x_{21}=0$, we have $u_{42}=0$.

Assume that $u_{32}=0$. Then $u_{23}=u_{24}=0$, and we may assume
that $u_{22}=1$. Since $u_3^*Au_2=x_{32}=0$, we obtain that
$-\bar{u}_{33}+2\bar{u}_{43}=0$.
Since $u_3 \perp u_4$, we must have $u_{44}=-2u_{34}\ne0$.
Now we obtain a contradiction: $0=x_{24}=u_2^*Au_4=-iu_{34}$.

Thus we must have $u_{32}\ne0$. Then
$u_2=u_{32}(0,\xi,1,0)$ with $\xi=u_{22}/u_{32}$.
If $u_{24}=0$, then $u_2 \perp u_4$ implies that $u_{34}=0$ and
the condition $x_{24}=0$ again gives a contradiction.
Consequently, we must have $u_{24}\ne0$ and so
$u_4=u_{24}(0,1,-\bar{\xi},\eta)$ for some $\eta\in\bC$. Then we have
\begin{eqnarray*}
Au_2 &=& u_{32}(0,-i(1+\xi),1-\xi,2\xi), \\
Au_4 &=& u_{24}(0,i(\bar{\xi}-1),-1-\bar{\xi}+(1-i)\eta,2+(i-1)\eta).
\end{eqnarray*}
As we must have $Au_4\perp u_2$, we obtain that
\begin{equation} \label{prva}
i\xi(1-\xi)-(1+\xi)+(1+i)\bar{\eta}=0.
\end{equation}
Since $U^*Au_2$ is the second column of $X$, $Au_2$ must be a linear
combination of $u_2$ and $u_4$. Therefore
\begin{equation*}
\left| \begin{array}{ccc}
\xi & -i(1+\xi) & 1 \\
1 & 1-\xi & -\bar{\xi} \\
0 & 2\xi & \eta \end{array} \right|=0,
\end{equation*}
i.e.,
\begin{equation} \label{druga}
\eta(\xi(1-\xi)+i(1+\xi))+2\xi(1+|\xi|^2)=0.
\end{equation}
From (\ref{prva}) and (\ref{druga}) we obtain that
\begin{eqnarray*}
&& \xi(1-\xi)+i(1+\xi)=(i-1)\bar{\eta}, \\
&& (1-i)|\eta|^2=2\xi(1+|\xi|^2).
\end{eqnarray*}
Hence, $\xi=\lambda(1-i)$ for some real $\lambda\ge0$.

It follows that
\begin{eqnarray*}
|\eta|^2 &=& 2\lambda(1+2\lambda^2), \\
(i-1)\bar{\eta} &=& i+2\lambda+2i\lambda^2, \\
2|\eta|^2 &=& 4\lambda^4+8\lambda^2+1, \\
0 &=& (1-2\lambda+2\lambda^2)^2,
\end{eqnarray*}
which is a contradiction.
\end{proof}

We now give the promised example of a pattern which
is universal and singular.

\begin{proposition} \label{Oblik 6}
The third pattern in Table 2 is universal.
\end{proposition}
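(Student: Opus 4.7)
Given $A \in L(4)$, we seek a unitary $U = (f_1 \mid f_2 \mid f_3 \mid f_4)$ with $X := U^*AU \in L_I$. The pattern forces $X$ to have the form
\begin{equation*}
X=\begin{pmatrix}*&0&0&0\\ 0&*&*&*\\ *&*&*&0\\ *&*&0&*\end{pmatrix}.
\end{equation*}
The vanishing of the entries $x_{1j}$ for $j \ge 2$ is equivalent to $f_1$ being an eigenvector of $A^*$ (a left eigenvector of $A$, with some eigenvalue $\lambda$); this forces $V := f_1^\perp$ to be $A$-invariant, so $B := A|_V$ is a well-defined operator on the 3-dimensional $V$. Writing $Af_1 = \lambda f_1 + u$ with $u \in V$, the condition $x_{21}=0$ becomes $f_2 \perp u$ in $V$. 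Finally, $x_{34}=x_{43}=0$ is the requirement that the compressed operator $C := P_M B|_M$ on the 2-dimensional subspace $M := V \cap f_2^\perp$ be normal, with $(f_3, f_4)$ taken as a unitary eigenbasis of $C$ on $M$.

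The natural plan is therefore: pick $f_1$ as one of the (generically four) eigenvectors of $A^*$; exhibit a 2-dimensional subspace $M \subset V$ containing $u$ for which the compression $C = P_M B|_M$ is normal (thereby pinning down $f_2$ as the unit vector in $V \cap M^\perp$); and finally produce $(f_3, f_4)$ by unitarily diagonalizing $C$ on $M$. If $A$ is normal, $f_1$ is simultaneously a right eigenvector, so $u = 0$ and $B$ is normal on $V$; one may then take $M$ spanned by two eigenvectors of $B$, making $C = B|_M$ diagonal and the construction trivial. In the most non-normal case $A = J_4$, the unique eigenvector of $A^*$ is $e_4$, and a direct calculation shows that $M = \mathrm{span}(e_1, e_3)$ yields $C = 0$, so an explicit $X \in L_I$ is produced in the permuted basis.

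The critical step for general $A$ is the existence of such an $M$. The 2-dimensional subspaces of $V$ through the fixed vector $u$ form a complex projective line (a 2-real-parameter family), while the normal matrices have real codimension 2 in $M_2(\bC)$ --- non-normality being measured by the single complex entry above the diagonal in the Schur form; the parameter count balances, and one expects isolated admissible $M$ for each of the four choices of $f_1$. The plan is to prove nonemptiness for some choice of $f_1$ by a direct algebraic argument: parameterise $M$ as a line in $V/\bC u$, compute $[C_M, C_M^*]$ explicitly in an eigenbasis of $B$, and exhibit a zero of the resulting map from the projective line to $\bC$. Matrices with repeated spectra or non-trivial Jordan structure are then handled either by direct case-by-case calculations (as for $J_4$) or by continuity from the generic stratum. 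Because $I$ is singular, Theorem~\ref{TeorAD} supplies no sufficient algebraic criterion on $\chi_I$, so existence of $M$ requires an \emph{ad hoc} geometric or computational argument tailored to this specific pattern --- that is the main obstacle.
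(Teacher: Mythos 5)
Your setup coincides with the paper's own: choose $f_1$ an eigenvector of $A^*$, work in the $A$-invariant hyperplane $V = f_1^\perp$, and seek a 2-plane $M \subset V$ containing the residual $u := Af_1 - \lambda f_1$ on which the compression of $A|_V$ admits an orthogonal diagonalization; then $f_2$ is the unit vector in $V \cap M^\perp$ and $f_3, f_4$ are the eigenbasis of the compression. But you stop precisely where the real work begins. You concede that existence of $M$ is ``the main obstacle'' and offer only a heuristic dimension count and a plan to ``exhibit a zero,'' not a proof. A dimension count is not evidence: the map you propose is only real-analytic (it involves complex conjugation), so nothing prevents it from missing zero on all of $\mathbb{CP}^1$; and your count is loose besides, since $[C_M, C_M^*]$ lives in the $3$-dimensional space of traceless Hermitian $2\times 2$ matrices rather than in $\bC$.

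The paper closes this gap with a B\'ezout-theorem argument that you did not anticipate. It parametrizes the orthogonal pair $x,y$ spanning $M$ by four real variables via the ansatz $x=(a+ib,\,c+id,\,1)$ and $y=(a+ib,\,c+id,\,-(a^2+b^2+c^2+d^2))$ in coordinates $\{b_1,b_2,b_3\}$, which makes $x\perp y$ and $b_3\in\operatorname{span}\{x,y\}$ automatic. The remaining conditions $Ax\perp y$ and $x\perp Ay$ then reduce, after subtracting and cancelling a positive factor, to one cubic complex equation and one linear complex equation, i.e.\ a real polynomial system of degrees $3,3,1,1$ in $(a,b,c,d)$. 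Its homogenization has, by B\'ezout, nine solutions in $\bC P^4$; a direct factorization of the cubic leading forms (as $(a^2+b^2+c^2+d^2)$ times a linear form), together with the genericity condition $b_{31}\bar b_{23}\ne b_{32}\bar b_{13}$, shows that exactly two of those lie at infinity. Genericity is permissible because $\Un(4)\cdot L_I$ is closed, so density suffices. Hence there are seven affine solutions with multiplicity; since non-real solutions pair off under complex conjugation and seven is odd, at least one must be real. That parity argument is the essential ingredient your proposal lacks.
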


\begin{proof} Let $A$ be any linear operator on $\bC^4$
of trace 0.
We have to construct an orthogonal basis $\{a_1,a_2,a_3,a_4\}$
such that, with respect to
this new basis, the matrix of $A$ belongs to $L_I$.

For $a_1$ we choose an eigenvector of $A^*$.
The case when $a_1$ is also
an eigenvector of $A$ is easy and we leave it to the reader.
We extend $\{a_1\}$ to an orthogonal basis $\{a_1,b_1,b_2,b_3\}$
such that $Aa_1=\lambda a_1+b_3$ for some $\lambda\in\bC$.
The matrix of $A$ with respect to this new basis has the form
\begin{equation*}
\left[ \begin{array}{c|ccc} \lambda&0&0&0 \\
\hline 0& & & \\ 0& &B& \\ 1& & & \end{array} \right].
\end{equation*}

In order to complete the proof, it suffices to show that there exist
nonzero vectors $x,y\in a_1^\perp$ such that
\begin{equation} \label{treca}
Ax\perp y,\quad x\perp Ay,\quad x\perp y,\quad b_3\in{\rm span}\{x,y\}.
\end{equation}

We shall now work with the $A$-invariant subspace $a_1^\perp$.
Let $b_{ij}$, $i,j\in\{1,2,3\}$ be the entries of the submatrix $B$.
We may assume that $b_{31}\bar{b}_{23}\ne b_{32}\bar{b}_{13}$ because
such matrices form a dense open subset of $M(3)$.
We shall write vectors in $a_1^\perp$ by using their coordinates with
respect to the basis $\{b_1,b_2,b_3\}$. We shall seek the vectors
$x$ and $y$ in the form
\[ x=(a+ib,c+id,1),\quad y=(a+ib,c+id,-a^2-b^2-c^2-d^2), \]
where $a,b,c,d\in\bR$. Observe that the last two conditions in
(\ref{treca}) are automatically satisfied. The first two conditions give
\begin{eqnarray}
\label{cetvrta} && \left( b_{11}(a+ib)+b_{12}(c+id)+b_{13} \right)
(a-ib)+ \\
\notag && \left( b_{21}(a+ib)+b_{22}(c+id)+b_{23} \right) (c-id)- \\
\notag && \left( b_{31}(a+ib)+b_{32}(c+id)+b_{33} \right)
(a^2+b^2+c^2+d^2) =0
\end{eqnarray}
and
\begin{eqnarray} \label{peta}
&& \left( b_{11}(a+ib)+b_{12}(c+id)-b_{13}(a^2+b^2+c^2+d^2) \right)
(a-ib)+ \\
\notag && \left( b_{21}(a+ib)+b_{22}(c+id)-b_{23}(a^2+b^2+c^2+d^2)
\right)(c-id)+ \\
\notag && \left( b_{31}(a+ib)+b_{32}(c+id)-b_{33}(a^2+b^2+c^2+d^2) \right) \cdot1=0.
\end{eqnarray}
By taking the difference and cancelling the factor $1+a^2+b^2+c^2+d^2$,
we obtain the linear equation
\begin{equation} \label{sesta}
b_{13}(a-ib)+b_{23}(c-id)-b_{31}(a+ib)-b_{32}(c+id)=0.
\end{equation}

Thus our problem is reduced to showing that the equations (\ref{cetvrta})
and (\ref{sesta}) have a real solution for the unknowns $a,b,c,d$.
We now set $b_{ij}=b_{ij}'+ib_{ij}''$ where $b_{ij}',b_{ij}''\in\bR$
and denote by $(S)$ the system of four equations obtained from
(\ref{cetvrta}) and (\ref{sesta}) by equating to zero their real and
imaginary parts. The first two of these four equations are not
homogeneous. By homogenizing these two equations we obtain the system
which we denote by $(S')$. Although we are interested in real solutions,
we shall now consider all complex solutions of $(S')$ in the complex
projective 4-space. By B\'{e}zout's theorem, there are 9 solutions in the
generic case (counting multiplicities). We are going to show that
exactly two of these solutions lie on the hyperplane at infinity.
Consequently, the system $(S)$ has exactly 7 solutions (counting
multiplicities). Since the non-real solutions come in complex conjugate
pairs, at least one of them has to be real.
Clearly, this will complete the proof.

To find the solutions in the hyperplane at infinity (a complex
projective 3-space), we have to solve yet another homogeneous
system, $(S'')$, which is obtained from $(S)$ by omitting the terms
of degree less than 3 in the first two equations and retaining the
last two (linear) equations. The two new cubic equations factorize
as follows:
\begin{eqnarray*}
&& (a^2+b^2+c^2+d^2)(b_{31}'a-b_{31}''b+b_{32}'c-b_{32}''d)=0, \\
&& (a^2+b^2+c^2+d^2)(b_{31}''a+b_{31}'b+b_{32}''c+b_{32}'d)=0. \\
\end{eqnarray*}
If we assume that $a^2+b^2+c^2+d^2\ne0$, then we obtain a system of
four linear equations. The condition
$b_{31}\bar{b}_{23}\ne b_{32}\bar{b}_{13}$ is just saying that the
determinant of this system of linear equations is not 0. Thus,
this system has only the trivial solution. Hence the
solutions of $(S'')$ are just the solutions of the
system of two linear equations of $(S)$ and the equation
$a^2+b^2+c^2+d^2=0$. But a line intersects a quadric in exactly
two points and we are done.
\end{proof}

For the remaining four exceptional classes the universality
remains undecided.

\begin{problem} \label{Obl-4}
Decide which of the last four patterns in Table 2 are universal.
\end{problem}

\section{Counting reducing flags} \label{RedFlag}

Recall that the homogeneous space
$\pF_n=\Un(n)/T_n$ is known as the {\em flag variety}. It is a real
smooth manifold of dimension $n(n-1)$. The points of this
manifold can be interpreted in several ways. By the above definition,
the points are cosets $gT_n$, $g\in\Un(n)$. They can be viewed also as
complete flags
$$ 0=V_0\subset V_1\subset V_2\subset\cdots\subset V_n=\bC^n, $$
i.e., the increasing sequence of complex subspaces $V_k$ with
$\dim V_k=k$. We shall follow this practise and refer to the points
of $\pF_n$ as {\em flags}.
Yet another often used interpretation is to consider
the points of $\pF_n$ as ordered $n$-tuples $ (W_1,W_2,\ldots,W_n) $
of 1-dimensional complex subspaces of $\bC^n$ which are orthogonal
to each other under the standard inner product.

Let $I\in\pP'_n$ and $A\in L(n)$.
Assume that the orbit $\pO_A$ meets $L_I=L_I(n)$ and let
$\pX_A=\pO_A\cap L_I$. The maximal torus $T_n$ acts on $\pX_A$
(on the right) by conjugation, i.e., $(X,t)\to t^{-1}Xt$ where
$t\in T_n$ and $X\in\pX_A$.
Let us also introduce the set
$$ \pU_A=\{g\in\Un(n):g^{-1}Ag\in L_I\}, $$
on which $T_n$ acts by right multiplication.
If $g\in\pU_A$ then $g^{-1}Ag\in L_I$ and we say that the
flag $gT_n$ {\em reduces} $A$ to $L_I$. Thus the set $\pU_A/T_n$
can be identified with the set of all reducing flags of the
matrix $A$.

The map
\begin{equation} \label{ekv-jed}
\theta:\pU_A\to\pX_A,\quad \theta(g)=g^{-1}Ag,
\end{equation}
is surjective and $T_n$-equivariant. Our main objective in this
section is to prove that the induced map
\begin{equation} \label{jed-jed}
\hat{\theta}:\pU_A/T_n\to\pX_A/T_n
\end{equation}
is bijective, i.e., that we have a natural bijective correspondence
between the reducing flags (for $A$) and the $T_n$-orbits
in $\pX_A$.

We need three lemmas. The first one is valid for any connected
compact Lie group $G$. For any $g\in G$ let $Z_g$ denote the identity
component of the centralizer of $g$ in $G$.

\begin{lemma} \label{Centre}
Let $G$ be a connected compact Lie group and $H$ a connected closed
subgroup of maximal rank. For any $g\in G$, $g$ belongs to the center
of $Z_g$. If $g\in H$ then $H$ contains the center of $Z_g$.
\end{lemma}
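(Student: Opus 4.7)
The plan is to reduce both parts to standard facts about maximal tori in connected compact Lie groups: every element lies in some maximal torus, and the center of a connected compact Lie group is contained in every one of its maximal tori.

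For the first assertion, I would start by picking a maximal torus $T$ of $G$ with $g\in T$. Since $T$ is abelian, $T\subseteq C_G(g)$, and since $T$ is connected, in fact $T\subseteq Z_g$. This already shows $g\in Z_g$. By definition of $Z_g\subseteq C_G(g)$, every element of $Z_g$ commutes with $g$, so $g$ lies in the center of $Z_g$.

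For the second assertion, the key observation is that the hypothesis of maximal rank makes it possible to choose a maximal torus of $G$ which simultaneously contains $g$ and lies in $H$. Concretely, since $H$ is compact connected and $g\in H$, there is a maximal torus $T_H$ of $H$ with $g\in T_H$; because $H$ has maximal rank, $T_H$ is also a maximal torus of $G$. The same argument as in the first part shows $T_H\subseteq Z_g$, and since $T_H$ is maximal in $G$, it is also maximal in $Z_g$. Now I would invoke the standard fact that the center of a connected compact Lie group is contained in every one of its maximal tori, applied to $Z_g$: this gives $Z(Z_g)\subseteq T_H\subseteq H$, as required.

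I do not expect any real obstacle; the main point is simply to arrange the choice of maximal torus correctly so that it witnesses both $g\in Z_g$ and $Z(Z_g)\subseteq H$ in the respective parts. The maximal-rank hypothesis is used exactly once, and only to guarantee that a maximal torus of $H$ through $g$ is already a maximal torus of $G$ (and hence of $Z_g$).
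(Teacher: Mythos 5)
Your proof is correct and follows essentially the same route as the paper's: choose a maximal torus through $g$ (inside $H$ in the second part, using the maximal-rank hypothesis to ensure a maximal torus of $H$ through $g$ is also maximal in $G$ and hence in $Z_g$), observe it lies in $Z_g$, and invoke that the center of a connected compact Lie group lies in every maximal torus.
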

\begin{proof}
Let $T$ be a maximal torus of $G$ such that $g\in T$.
As $T\subseteq Z_g$, we have $g\in Z_g$ and the first assertion
follows. If $g\in H$, we may assume that $T$ is chosen so that
$T\subseteq H$. Then $T$ is a maximal torus of $Z_g$, and so $T$
must contain the center of $Z_g$. As $T\subseteq H$, we are done.
\end{proof}

The next two lemmas deal with the case $G=\Un(n)$.

\begin{lemma} \label {Connect-1}
If $H_1$ and $H_2$ are connected closed subgroups of $\Un(n)$
of rank $n$, then $H_1\cap H_2$ is connected.
\end{lemma}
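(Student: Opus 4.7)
The plan is to reduce the statement to Lemma~\ref{Centre} by showing that each $g\in H_1\cap H_2$ lies inside a connected subgroup that is contained in $H_1\cap H_2$ and contains the identity. The natural candidate, suggested directly by Lemma~\ref{Centre}, is $Z(Z_g)$, the center of the identity component $Z_g$ of the centralizer of $g$ in $\Un(n)$.

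First I would verify the needed structural facts about $Z_g$ for $G=\Un(n)$. Since any $g\in\Un(n)$ is unitarily diagonalizable, its centralizer in $\Un(n)$ is the orthogonal product $\Un(E_{\lam_1})\times\cdots\times\Un(E_{\lam_k})$ taken over the eigenspaces of $g$. In particular this centralizer is already connected, so it coincides with $Z_g$, it has rank $n$, and its center $Z(Z_g)\cong\Un(1)^k$ is a torus, hence connected and containing the identity element.

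Next I would apply Lemma~\ref{Centre} twice, with $G=\Un(n)$ and $H=H_i$ for $i=1,2$. Since each $H_i$ is connected, closed, and of maximal rank, and since $g\in H_i$, the lemma yields $Z(Z_g)\subseteq H_i$; taking the intersection gives $Z(Z_g)\subseteq H_1\cap H_2$. The first assertion of Lemma~\ref{Centre} further gives $g\in Z(Z_g)$. Thus $Z(Z_g)$ is a connected subgroup of $H_1\cap H_2$ containing both $g$ and the identity, so $g$ lies in the identity component of $H_1\cap H_2$. Since $g$ was arbitrary, $H_1\cap H_2$ is connected.

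I do not anticipate any real obstacle: the entire substance of the argument is absorbed into Lemma~\ref{Centre}, and the only item to verify independently is the elementary fact that centralizers in $\Un(n)$ decompose as products of unitary groups on eigenspaces, from which the connectedness of $Z_g$ and of $Z(Z_g)$ is immediate.
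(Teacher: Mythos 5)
Your proof is correct and follows essentially the same route as the paper: both arguments reduce to showing that $Z(Z_g)$ is connected (a torus, since the centralizer of a diagonalizable unitary is a product of unitary groups) and then invoke Lemma~\ref{Centre} to place $g$ and the identity in the connected subgroup $Z(Z_g)\subseteq H_1\cap H_2$. The only difference is that you spell out explicitly why connectedness of $Z(Z_g)$ suffices, a step the paper compresses into one sentence.
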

\begin{proof}
In view of the above lemma, it suffices to show that the center of
$Z_g$ is connected for all $g\in\Un(n)$. To prove this,
we may assume that $g$ is a diagonal matrix. It follows that
$$ Z_g \cong \Un(n_1)\times\Un(n_2)\times\cdots\times\Un(n_k), \quad
n_1+\cdots+n_k=n. $$
Hence the center of $Z_g$ is a torus.
\end{proof}

\begin{lemma} \label {Connect-2}
The centralizer of any $A\in M(n)$ in $\Un(n)$ is connected.
\end{lemma}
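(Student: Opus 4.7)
The plan is to reduce the statement to the situation already handled by Lemma \ref{Connect-1}. Given $A\in M(n)$, write $A=H+iK$, where $H=(A+A^*)/2$ and $K=(A-A^*)/(2i)$ are Hermitian. Let $Z_A$, $Z_H$, $Z_K$ denote the centralizers of $A$, $H$, $K$ in $\Un(n)$, respectively. The first step is to establish the identity
\[
Z_A = Z_H\cap Z_K.
\]
The inclusion ``$\supseteq$'' is obvious since $A$ is a $\bC$-linear combination of $H$ and $K$. For ``$\subseteq$'', I would observe that if $U\in Z_A$, then taking adjoints in $UA=AU$ and using $U^*=U^{-1}$ gives $UA^*=A^*U$ as well, so $U$ commutes with both $H$ and $K$.

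The second step is to show that $Z_H$ and $Z_K$ are connected closed subgroups of $\Un(n)$ of maximal rank $n$. Since $H$ is Hermitian, there exists $V\in\Un(n)$ such that $V^{-1}HV$ is a real diagonal matrix, and grouping equal eigenvalues shows
\[
Z_H \;\cong\; \Un(n_1)\times\Un(n_2)\times\cdots\times\Un(n_k),\qquad n_1+\cdots+n_k=n,
\]
a connected group of rank $n$. The same reasoning applies to $K$.

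The third and final step is simply to invoke Lemma \ref{Connect-1}: the intersection of two connected closed subgroups of $\Un(n)$ of maximal rank is connected, so $Z_A=Z_H\cap Z_K$ is connected, as required.

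No step looks to be a serious obstacle; the only mild subtlety is the elementary observation that commuting with $A$ forces commuting with $A^*$ (for unitary $U$), which is what lets us replace $A$ by the two Hermitian matrices $H$ and $K$ and thereby bring Lemma \ref{Connect-1} to bear.
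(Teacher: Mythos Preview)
Your proof is correct and is essentially identical to the paper's own argument: both write $A=A_1+iA_2$ with $A_1,A_2$ Hermitian, observe that each $Z_{A_k}$ is a connected closed subgroup of $\Un(n)$ of rank $n$ (being conjugate to a product of unitary groups), and then apply Lemma~\ref{Connect-1} to $Z_A=Z_{A_1}\cap Z_{A_2}$. Your write-up is in fact slightly more explicit than the paper's, since you spell out why a unitary $U$ commuting with $A$ must also commute with $A^*$, which is exactly the point needed to justify $Z_A=Z_{A_1}\cap Z_{A_2}$.
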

\begin{proof}
Let $A=A_1+iA_2$ where $A_1$ and $A_2$ are hermitian matrices.
Since $A_k$ is unitarily diagonalizable, its centralizer $H_k$ in
$\Un(n)$ is a closed connected subgroup of rank $n$. Hence,
the centralizer $H_1\cap H_2$ of $A$ in $\Un(n)$ is connected
by Lemma \ref{Connect-1}.
\end{proof}

We can now prove the desired result. Recall that $A\in L(n)$ is
$I$-generic if $\pO_A$ and $L_I$ intersect transversally.

\begin{theorem} \label{Corresp}
Let $I\in\pP'_n$ and let $A\in L(n)$ be $I$-generic. Then the map
$\hat{\theta}$ defined by (\ref{ekv-jed}) and (\ref{jed-jed}) is
bijective. Consequently, $N_{A,I}$ is the number of flags which reduce
$A$ to $L_I$.
\end{theorem}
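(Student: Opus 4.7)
The plan is to dispose of surjectivity by inspection and reduce injectivity to a Lie-algebraic statement, namely that the centralizer $C_B$ of every $B\in\pX_A$ in $\Un(n)$ sits inside $T_n$. Surjectivity is immediate: any $B\in\pX_A=\pO_A\cap L_I$ is of the form $g^{-1}Ag$ for some $g\in\Un(n)$, and such a $g$ automatically lies in $\pU_A$, so $\hat{\theta}([g])=[B]$.

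For the injectivity reduction, I note that the fibres of $\theta$ are the cosets $C_A\cdot g$, since $g^{-1}Ag=(g')^{-1}Ag'$ iff $g(g')^{-1}\in C_A$. Suppose $[g_1],[g_2]\in\pU_A/T_n$ satisfy $\hat{\theta}([g_1])=\hat{\theta}([g_2])$, so $\theta(g_2)=t^{-1}\theta(g_1)t=\theta(g_1t)$ for some $t\in T_n$. Then $h:=g_2(g_1t)^{-1}\in C_A$ and $g_2=hg_1t$. The containment $g_2\in g_1T_n$ is equivalent (using $t\in T_n$) to $g_1^{-1}hg_1\in T_n$, and $g_1^{-1}hg_1\in g_1^{-1}C_Ag_1=C_{B_1}$ where $B_1:=\theta(g_1)$. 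So once $C_{B_1}\subseteq T_n$ is known, injectivity follows.

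The key claim $C_B\subseteq T_n$ will be extracted from $I$-genericity via a tangent-space computation. The crucial observation is $[\t_n,B]\subseteq L_I$: if $X\in\t_n$ is diagonal then $(XB-BX)_{ij}=(X_{ii}-X_{jj})B_{ij}$, which vanishes for every $(i,j)\in I$ because $I$ is strict and $B\in L_I$. Combined with the obvious $[\t_n,B]\subseteq[\u(n),B]=T_B\pO_A$, one gets $[\t_n,B]\subseteq L_I\cap T_B\pO_A$. The transversality $L_I+T_B\pO_A=L(n)$ fixes the dimension of the right-hand side; using $\dim_{\bR}L_I=n^2+n-2$ (from $|I|=\mu_n$ and strictness of $I$, which decouples the trace condition from the zero-pattern conditions) and $\dim_{\bR}T_B\pO_A=n^2-\dim C_B$, a real-dimension count gives
\[
\dim_{\bR}(L_I\cap T_B\pO_A)=(n^2+n-2)+(n^2-\dim C_B)-(2n^2-2)=n-\dim C_B,
\]
while $\dim[\t_n,B]=n-\dim(T_n\cap C_B)$. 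The inclusion of tangent spaces then forces $\dim(T_n\cap C_B)\geq\dim C_B$; the reverse inequality is automatic, so $T_n\cap C_B$ is an open subgroup of $C_B$. By Lemma \ref{Connect-2} $C_B$ is connected, so $T_n\cap C_B=C_B$, i.e., $C_B\subseteq T_n$.

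The main obstacle is the dimension bookkeeping, especially the identity $\dim_{\bR}L_I=n^2+n-2$, which relies on the strictness of $I$ to decouple the trace condition from the zero-pattern conditions. After that, the argument is a mechanical consequence of the tangent-space inclusion $[\t_n,B]\subseteq L_I\cap T_B\pO_A$ together with the connectedness of $C_B$ provided by Lemma \ref{Connect-2}.
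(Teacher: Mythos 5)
Your proof is correct and follows essentially the same route as the paper: reduce injectivity to showing $C_B\subseteq T_n$, use transversality to bound dimensions so that the Lie algebra of $C_B$ is forced into $\t_n$, and then invoke Lemma \ref{Connect-2} (connectedness of the centralizer) to upgrade from Lie algebras to groups. The only difference is cosmetic: the paper performs the dimension count directly on $\ker(X\mapsto [X,B]\bmod L_I)$, concluding it equals $\t_n$, whereas you compute $\dim(L_I\cap T_B\pO_A)$ and $\dim[\t_n,B]$ separately and compare, but the arithmetic is the same.
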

\begin{proof}
Since $\theta$ is surjective, so is $\hat{\theta}$. In order to prove
that $\hat{\theta}$ is injective, it suffices to show that if
$g_1,g_2\in\pU_A$ are such that $g_1^{-1}Ag_1=g_2^{-1}Ag_2$, we denote
this matrix by $B$, then the element $h=g_1^{-1}g_2$ belongs to $T_n$.
If $\u(n)\subseteq M(n)$ is the space of skew-hermitian matrices,
the transversality hypothesis implies that
$$ \{X\in\u(n):[X,B]\in L_I\} $$
is the space $\t_n$ of the diagonal skew-hermitian matrices. Hence, the Lie
algebra of the centralizer of $B$ in $\Un(n)$ is contained in $\t_n$.
By the above lemma this centralizer is connected, and so must be
contained in $T_n$. As $h$ commutes with $B$, we have $h\in T_n$.
\end{proof}

\begin{remark} \label{IntN}
It follows from the theorem that the number $N_{A,I}$ coincides
with the number denoted by $N(A,M_I(n,\bC))$ in \cite{AD}.
\end{remark}

\section{The cyclic pattern in the case $n=3$} \label{CikObl}

In this section we consider only the case $n=3$ and the cyclic pattern
\[ I=\{(1,3),(2,1),(3,2)\}\in\pP'_3. \]
By Theorem \ref{TeorAD}, $I$ is universal.
For $A\in L(3)$ let $\pO_A=\{UAU^{-1}:U\in\Un(3)\}$ be its orbit.
Recall that $A$ (or $\pO_A$) is said to be $I$-generic
if $\pO_A$ and $L_I=L_I(3)$ intersect transversally. Since $I$ is fixed,
we shall drop the prefix ``$I$-'' and say just that $A$ is generic.
We denote by $\Theta$ the set of nongeneric matrices in $L(3)$.
Clearly $\Theta$ is a closed $\Un(n)$-invariant subset.

We shall study here the set $\Theta$, and the intersections
$\pX_A=\pO_A \cap L_I$ for generic $A\in L(3)$.
In particular, we are interested in the possible values
of the number, $N_A=N_{A,I}$, of $T_3$-orbits contained in $\pX_A$.

First we consider the intersection $\Theta\cap L_I$.
This set contains all points $A\in L_I$ such that $\pO_A$
and $L_I$ are not transversal at $A$.
An arbitrary matrix $A\in L_I$ can be written as
\begin{equation} \label{Mat-A}
A=\left[ \begin{array}{ccc} u & z & 0 \\ 0 & v & x \\ y & 0 & w
\end{array} \right], \quad w=-u-v.
\end{equation}

Define the homogeneous polynomial $P_1:L_I\to\bR$ of degree 6 in the
real and imaginary parts of the complex variables $x,y,z,u,v$ by:
\begin{eqnarray} \label{Pol-P1}
P_1(A) &=&
|(v-w)x^2|^2+|(w-u)y^2|^2+|(u-v)z^2|^2 \\ \notag
&& +\left( |(v-w)x|^2+|yz|^2 \right)\cdot \left( |v|^2+|w|^2-5|u|^2 \right) \\
\notag
&& +\left( |(w-u)y|^2+|zx|^2 \right)\cdot \left( |w|^2+|u|^2-5|v|^2 \right) \\
\notag
&& +\left( |(u-v)z|^2+|xy|^2 \right)\cdot \left( |u|^2+|v|^2-5|w|^2 \right) \\
\notag
&& +|(u-v)(v-w)(w-u)|^2.
\end{eqnarray}

\begin{proposition} \label{Hip-pov}
For $A\in L_I$ as above, $\pO_A$ and $L_I$ intersect
non-transversally at $A$ iff $A$ lies on the real hypersurface
$\Gamma_1\subset L_I$ defined by the equation $P_1=0$.
\end{proposition}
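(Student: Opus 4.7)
The approach is to rephrase non-transversality of $\pO_A$ and $L_I$ at $A$ as the vanishing of the determinant of an explicit $\bR$-linear map between two real $6$-dimensional spaces, and then match that determinant with the polynomial $P_1(A)$ by direct computation.

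Since $T_A\pO_A=\{[X,A]:X\in\u(3)\}$, transversality of $\pO_A$ and $L_I$ at $A$ is equivalent to surjectivity of the $\bR$-linear map $\psi_A\colon\u(3)\to L(3)/L_I$ given by $X\mapsto[X,A]+L_I$. For any diagonal skew-Hermitian $X=\diag(ix_1,ix_2,ix_3)\in\t_3$ one has $[X,A]_{kj}=i(x_k-x_j)A_{kj}$, which is supported on positions $(1,2),(2,3),(3,1)$, all lying outside $I$. Hence $\t_3\subseteq\ker\psi_A$ and $\psi_A$ descends to $\bar\psi_A\colon\u(3)/\t_3\to L(3)/L_I$ between spaces of real dimension $6$. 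Non-transversality at $A$ is therefore equivalent to $\det\bar\psi_A=0$ in any chosen bases.

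Using the natural complex structures on the quotients ($\u(3)/\t_3\cong\bC^3$ via $E_{kl}-E_{lk}\leftrightarrow 1$ and $i(E_{kl}+E_{lk})\leftrightarrow i$ in the $(k,l)$-th factor for $(k,l)\in\{(1,2),(1,3),(2,3)\}$, and $L(3)/L_I\cong\bC^3$ indexed by the entries at $(1,3),(2,1),(3,2)$), a direct commutator computation with $A$ written as in (\ref{Mat-A}) identifies $\bar\psi_A$ with the $\bR$-linear map $\beta=P\alpha+Q\bar\alpha$ from $\bC^3$ to $\bC^3$, where
$$P=\begin{pmatrix}x&w-u&-z\\ 0&0&y\\ -y&0&0\end{pmatrix},\qquad Q=\begin{pmatrix}0&0&0\\ v-u&x&0\\ 0&-z&w-v\end{pmatrix}.$$
By the standard doubling trick for real-linear maps between complex spaces,
$$\det\bar\psi_A=\det{}_\bC\begin{pmatrix}P&Q\\ \bar Q&\bar P\end{pmatrix},$$
which is a real polynomial, homogeneous of degree $6$ in the real and imaginary parts of $u,v,x,y,z$.

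It remains to expand this $6\times 6$ complex determinant and match it with $P_1$ as displayed in (\ref{Pol-P1}). Both expressions are real, homogeneous of degree $6$, and invariant under the $T_3$-action $(x,y,z)\mapsto(t_2t_3^{-1}x,\,t_3t_1^{-1}y,\,t_1t_2^{-1}z)$ that rescales the off-diagonal entries by unit-modulus phases while fixing $u,v,w$; this invariance severely restricts which monomials can appear on either side. A term-by-term symbolic expansion (most practically carried out in \textsc{Maple}, consistent with the paper's style, and spot-checked at convenient test matrices such as $A=\diag(1,-1,0)$, where both sides equal $4$) then yields $\det\bar\psi_A=\pm P_1(A)$, whence the stated equivalence. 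The main obstacle is precisely this symbolic expansion: while the conceptual reduction to a determinantal condition is routine, reconciling the expanded $6\times 6$ determinant with the specific six-group arrangement of monomials in (\ref{Pol-P1}) is a substantial computer-algebra task, and the alternative invariant-theoretic route (matching degrees and using the absolute irreducibility of $P_1$ to conclude proportionality) still requires computing the constant of proportionality on a sample point.
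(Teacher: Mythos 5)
Your proposal is correct and follows essentially the same strategy as the paper: reduce non-transversality to the vanishing of a $6\times 6$ real determinant (the paper phrases this via $L_I+[A,S]=L(3)$ with $S$ the skew-Hermitian matrices with zero diagonal, which is exactly your $\bar\psi_A$), and then identify that determinant with $P_1$ by direct computation. You make the determinant explicit via the complex/antilinear decomposition $\beta = P\alpha + Q\bar\alpha$ and the doubling trick $\det_{\bC}\left(\begin{smallmatrix}P&Q\\ \bar Q&\bar P\end{smallmatrix}\right)$, which is a clean way to organize the ``routine computation'' the paper invokes, and your spot-check at $\diag(1,-1,0)$ (both sides $=4$) is consistent; like the paper, though, you ultimately defer the full symbolic matching with $P_1$ to computer algebra.
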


\begin{proof}
The tangent space to $\pO_A$ at the point $A$ consists of all matrices
$[A,X]=AX-XA$ with $X^*=-X$ and $\tr(X)=0$. If $X$ is diagonal, then
$[A,X]\in L_I$. Let $S$ be the subspace of skew-Hermitian matrices
with all diagonal entries 0. We see immediately that $\pO_A$ and
$L_I$ are transversal at $A$ iff $L_I+[A,S]=L(3)$.
A routine computation shows that the latter condition is
equivalent to the vanishing of a certain determinant, a homogeneous
polynomial of degree 6. It is not hard to verify that this polynomial
(unique up to a scalar factor) is $P_1$.
\end{proof}

\begin{corollary} \label{Gama-1}
We have
\begin{eqnarray}
\label{G-Teta} \Gamma_1 \subseteq \Theta \cap L_I, \\
\label{T-Gama} \Theta = \cup_{A\in\Gamma_1} \pO_A.
\end{eqnarray}
\end{corollary}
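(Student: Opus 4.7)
The corollary is essentially a formal consequence of Proposition \ref{Hip-pov}, so the plan is to unwind definitions.

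For the inclusion \eqref{G-Teta}, I would start from $A \in \Gamma_1$. By definition of $\Gamma_1$ we have $A \in L_I$, and by Proposition \ref{Hip-pov} the orbit $\pO_A$ and $L_I$ intersect non-transversally at $A$. Since $A \in \pX_A$ witnesses a point of non-transversality, $\pO_A$ and $L_I$ do not intersect transversally in the sense required for genericity, so $A$ is nongeneric, i.e.\ $A \in \Theta$. Combined with $A \in L_I$ this gives $A \in \Theta \cap L_I$.

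For the equality \eqref{T-Gama}, the inclusion $\supseteq$ follows immediately: $\Gamma_1 \subseteq \Theta$ was just established, and $\Theta$ is $\Un(3)$-invariant (as noted at the beginning of the section), so each orbit $\pO_A$ with $A \in \Gamma_1$ lies in $\Theta$. For the inclusion $\subseteq$, take any $B \in \Theta$. Since $I$ is universal (Theorem \ref{TeorAD}), $\pX_B = \pO_B \cap L_I$ is nonempty. By definition of nongenericity, there exists a point $A \in \pX_B$ at which $\pO_B$ and $L_I$ intersect non-transversally. This $A$ lies in $L_I$ and satisfies $\pO_A = \pO_B$, so by Proposition \ref{Hip-pov} we get $A \in \Gamma_1$. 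Then $B \in \pO_A \subseteq \bigcup_{A' \in \Gamma_1} \pO_{A'}$, completing the proof.

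There is no real obstacle here beyond confirming that the two directions of the definition of ``non-transversal intersection of $\pO_A$ with $L_I$'' and the pointwise criterion of Proposition \ref{Hip-pov} fit together correctly; in particular, one must remember that universality of $I$ guarantees $\pX_B \neq \emptyset$, so nongenericity genuinely produces a point at which to invoke the proposition rather than being vacuous.
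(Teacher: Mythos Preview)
Your proof is correct and follows the same approach as the paper, which dispatches \eqref{G-Teta} as ``obvious'' and proves the nontrivial inclusion $\Theta\subseteq\bigcup_{A\in\Gamma_1}\pO_A$ exactly as you do, by picking a non-transversal point in $\pX_B$ and invoking Proposition~\ref{Hip-pov}. Your remark about universality is harmless but unnecessary: by the paper's definition, transversality holds vacuously when $\pX_B=\emptyset$, so $B\in\Theta$ already forces $\pX_B\neq\emptyset$.
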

\begin{proof}
The inclusion (\ref{G-Teta}) is obvious.
If $B\in\Theta$ then there exists $A\in\pX_B$ such that
$\pO_B$ and $L_I$ intersect non-transversally at $A$. Hence $A\in\Gamma_1$
and $B\in\pO_A$. Thus (\ref{T-Gama}) is valid.
\end{proof}

Our next objective is to construct the (unique) irreducible real hypersurface
$\Gamma$ in $L(3)$ containing the set $\Theta$. It is given by an equation
$P=0$, where $P:L(3)\to\bR$ is a homogeneous polynomial of
degree 24 in 16 variables, the real and imaginary parts of the entries
of $X\in L(3)$ except the last entry.
The polynomial $P$ is invariant under the action of the direct product
$\Un(1)\times\SU(3)$, where $\Un(1)$ acts by multiplication with scalars of
unit modulus and $\SU(3)$ acts by conjugation.
It can be expressed as a polynomial in the invariants $i_1,i_2,\ldots,i_{16}$
listed in Appendix A.
As this expression has 203 terms, we shall give it separately in Appendix B.

We warn the reader that $P$ is rather large.
Denote by $P_I$ its restriction to the subspace $L_I$.
We run out of memory if we try to evaluate $P_I$ at an
arbitrary matrix in $L_I$ and expand it as a polynomial
in the 10 real variables (the real and imaginary parts of the complex
variables $x,y,z,u,v$). However, when we set the imaginary parts of $y$
and $z$ to 0, then we can expand $P_I$ and obtain 130571 terms. If we additionally
set the imaginary part of $x$ to 0, then the number of terms goes down to 50583.

Recall that the hypersurface $\Gamma\subset L(3)$ is defined by
the equation $P=0$.

\begin{proposition} \label{Hip-Pov}
The restriction $P_I$ admits the factorization:
\begin{equation} \label{Faktor}
P_I=P_1^2 P_2,
\end{equation}
where $P_1$ is defined by (\ref{Pol-P1}) and $P_2$ is a homogeneous polynomial
of degree 12 with integer coefficients. Thus
$\Gamma \cap L_I=\Gamma_1 \cup \Gamma_2$, where $\Gamma_2$
is the hypersurface defined by $P_2=0$. We also have
$\Theta\subseteq\Gamma$.
\end{proposition}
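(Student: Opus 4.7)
My plan is to establish the factorization $P_I = P_1^2 P_2$ first, then deduce the remaining two assertions from it. That $P_1 \mid P_I$ has a short conceptual proof: by construction of $\Gamma$ as the hypersurface containing $\Theta$, the polynomial $P$ vanishes on $\Theta$, and Corollary \ref{Gama-1} gives $\Gamma_1 \subseteq \Theta \cap L_I$, so $P_I$ vanishes on the zero locus of $P_1$. Since $P_1$ is absolutely irreducible (as the paper later records), $P_1 \mid P_I$. To upgrade this to $P_1^2 \mid P_I$ I would substitute the parametrization (\ref{Mat-A}) of $L_I$ into the expression for $P$ given in Appendix B, expand, and check divisibility by $P_1^2$ directly. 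The quotient $P_2 := P_I/P_1^2$ is then automatically a homogeneous polynomial of degree $24-12=12$ with integer coefficients, since both $P$ and $P_1$ have integer coefficients and the ambient polynomial ring is a UFD.

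Once the factorization is in hand, the decomposition $\Gamma \cap L_I = \Gamma_1 \cup \Gamma_2$ is immediate: $A \in L_I$ belongs to $\Gamma$ iff $P_I(A) = 0$ iff $P_1(A) = 0$ or $P_2(A) = 0$. For the containment $\Theta \subseteq \Gamma$ I would combine the factorization with the $\Un(3)$-invariance of $P$. By equation (\ref{T-Gama}) of Corollary \ref{Gama-1}, $\Theta = \bigcup_{A \in \Gamma_1} \pO_A$. For any $A \in \Gamma_1$ the factorization yields $P(A) = P_I(A) = P_1(A)^2 P_2(A) = 0$, hence $A \in \Gamma$; since $P$ is $\Un(3)$-invariant, the whole orbit $\pO_A$ lies in $\Gamma$, and taking the union over $A \in \Gamma_1$ gives $\Theta \subseteq \Gamma$.

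The main obstacle is computational. The authors note that a brute-force expansion of $P_I$ already exhausts memory, and even with the imaginary parts of $y$ and $z$ set to zero it produces on the order of $10^5$ terms. A practical strategy is to verify the factorization on a sequence of specialized slices (setting successively more imaginary parts of $x,y,z$ to zero), pin down $P_2$ on each slice, and then reassemble the full identity using homogeneity and the $\Un(1)$-action on $L_I$. A subtler point is verifying that the multiplicity of $P_1$ in $P_I$ is exactly $2$ rather than $1$ or $3$: I would confirm this by restricting $P_I$ to a smooth curve in $L_I$ that meets $\Gamma_1$ transversally at a single smooth point and reading off the order of vanishing of the resulting one-variable polynomial.
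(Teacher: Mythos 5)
Your route to the second and third assertions matches the paper exactly, and your treatment of the factorization ultimately reduces, as it must, to a computer verification of $P_I = P_1^2 P_2$ after substituting the parametrization (\ref{Mat-A}) of $L_I$ into the explicit form of $P$ in Appendix~B. That is precisely what the paper does, and the remarks about slicing and reassembling are reasonable practical advice rather than a different proof. The claim $\Theta \subseteq \Gamma$ is then obtained exactly as in the paper from (\ref{T-Gama}), the factorization, and $\Un(3)$-invariance of~$P$.

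There is, however, a genuine logical flaw in the opening paragraph that you should excise. You argue that $P_1 \mid P_I$ follows ``by construction of $\Gamma$ as the hypersurface containing $\Theta$'' together with $\Gamma_1 \subseteq \Theta \cap L_I$ from Corollary~\ref{Gama-1}. But $\Theta \subseteq \Gamma$ is not a definition --- it is the \emph{third assertion of the very proposition you are proving}, and the paper establishes it only as a consequence of the factorization. The polynomial $P$ is simply handed to us in Appendix~B; nothing preceding the proposition guarantees that $P$ vanishes on $\Theta$, so invoking that to deduce $P_1 \mid P_I$ is circular. (There is also a secondary subtlety: even granting that $P_I$ vanishes on the real zero set of $P_1$, concluding $P_1 \mid P_I$ over $\bR$ requires that the real variety $\Gamma_1$ be Zariski-dense in the complex hypersurface $\{P_1=0\}$ --- irreducibility of $P_1$ alone does not suffice, as $x^2+y^2$ over $\bR$ shows.) Since your actual proof of the factorization is the direct computational check, which does not need this conceptual warm-up, the cleanest fix is simply to drop it. Finally, the concern about the multiplicity of $P_1$ being exactly $2$ rather than higher is beyond what this proposition claims; that $P_2$ is not divisible by $P_1$ follows from the absolute irreducibility of $P_2$, which the paper establishes separately in Proposition~\ref{AbsIred}.
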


\begin{proof}
The first assertion can be verified by using a computer and suitable software,
e.g., {\sc Maple} \cite{MGH}.
The assertion that $\Gamma \cap L_I=\Gamma_1 \cup \Gamma_2$
is now obvious. Finally, the assertion $\Theta\subseteq\Gamma$ follows from
Corollary \ref{Gama-1} and the fact that $\pO_A\subseteq\Gamma$ for
$A\in\Gamma$.
\end{proof}

We prove next that $P_1$, $P_2$ and $P$ are irreducible.
In fact we have a stronger result.

\begin{proposition} \label{AbsIred}
The real polynomials $P_1$, $P_2$ and $P$ defined above are absolutely
irreducible (i.e., irreducible over $\bC$).
\end{proposition}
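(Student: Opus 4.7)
The plan is to establish absolute irreducibility of $P_1$, $P_2$, and $P$ in sequence, verifying the first two by direct computation and then leveraging the factorization $P_I = P_1^2 P_2$ for the third.

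For $P_1$, the explicit formula (\ref{Pol-P1}) exhibits it, in the complexification, as a polynomial of bidegree $(3,3)$ in the ten independent variables $x,\bar x, y,\bar y, z,\bar z, u,\bar u, v,\bar v$ (with $w = -u-v$). I would verify absolute irreducibility either by a direct factorization in {\sc Maple} over $\bC$, or by a specialization argument: assign generic scalar values to eight of the ten variables and check that the resulting bivariate polynomial of degree $6$ is irreducible over $\bC$, which reduces to a one-variable factorization. For $P_2$, I would first compute it as the quotient $P_I/P_1^2$ by polynomial long division, then apply the identical scheme to this explicit degree-$12$ polynomial of bidegree $(6,6)$.

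For $P$, direct factorization is impractical given the $203$-term expression in Appendix~B, so I would argue indirectly. Suppose $P = QR$ is a nontrivial factorization over $\bC$. Restriction to $L_I$ yields $P_I = Q_I R_I = P_1^2 P_2$, and since restriction to a linear subspace cannot increase total degree, the identity $\deg Q_I + \deg R_I = 24 = \deg Q + \deg R$ together with $\deg Q_I \le \deg Q$ and $\deg R_I \le \deg R$ forces $\deg Q_I = \deg Q$ and $\deg R_I = \deg R$. The already-established absolute irreducibility of $P_1$ and $P_2$ then leaves, up to scalars, only the partitions $\{P_1, P_1 P_2\}$ and $\{P_1^2, P_2\}$, so $(\deg Q, \deg R) \in \{(6,18), (12,12)\}$ up to order.

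To eliminate both possibilities, I would exploit the $\Un(1) \times \SU(3)$-invariance of $P$. Since both factors of the group are connected and $\SU(3)$ is semisimple (hence admits only trivial characters), the group fixes each irreducible factor of $P$ up to a $\Un(1)$-character; a bidegree comparison, using that $P_1$ and $P_2$ have balanced bidegrees $(3,3)$ and $(6,6)$, respectively, forces the relevant characters to be trivial, so $Q$ and $R$ are fully $\Un(1) \times \SU(3)$-invariant. Therefore $Q$ and $R$ are polynomials in the generators $i_1,\ldots,i_{16}$ of Appendix~A of appropriate balanced bidegrees $(3,3)$, $(9,9)$, or $(6,6)$. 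Enumerating the finite monomial bases of these invariant spaces and restricting each basis element to $L_I$ reduces the question to a linear algebra check: no $\Un(1) \times \SU(3)$-invariant polynomial of bidegree $(3,3)$ restricts to a nonzero scalar multiple of $P_1$, and no such invariant of bidegree $(6,6)$ restricts to a nonzero scalar multiple of $P_1^2$ or of $P_2$. The main obstacle is this final enumerate-and-check step, which will require symbolic computation given the size of the invariant space in bidegree $(6,6)$.
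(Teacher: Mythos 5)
Your treatment of $P_1$ and $P_2$ matches the paper, which likewise verifies absolute irreducibility of these explicit polynomials by specialization and computer algebra (the paper uses {\sc Singular}'s {\tt absfact.lib}). Your setup for $P$ — factor $P=QR$, restrict to $L_I$, use homogeneity to match degrees, deduce $\Un(3)$-conjugation-invariance of $Q$ and $R$ from connectedness — is also sound and parallels the paper's.

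The divergence, and the gap, is in the final step. You propose to rule out the remaining factorizations by enumerating all $\Un(1)\times\SU(3)$-invariants of bidegree $(3,3)$ and $(6,6)$ as polynomials in $i_1,\ldots,i_{16}$, restricting each to $L_I$, and doing a linear-algebra check against $P_1$, $P_1^2$, $P_2$. This rests on the claim that $i_1,\ldots,i_{16}$ generate the full invariant algebra. The paper explicitly states in Appendix A that this generation fact ``is neither proved nor used in this paper,'' so you would be invoking an unestablished result; a correct version of your approach would have to replace it with a provable spanning set (e.g.\ trace monomials $\tr(w)$ in $X,X^*$ via first-fundamental-theorem-type results), and the resulting linear-algebra problem in degree $12$ over $10$ real variables is likely to be at or beyond the memory limits the authors already report hitting when expanding $P_I$. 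There is also a softer issue: your criterion is sufficient but not necessary — if some invariant of the relevant degree did happen to restrict to $cP_1^2$, say, without dividing $P$, the check would be inconclusive rather than conclusive.

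The paper sidesteps all of this with a witness argument: it exhibits two explicit unitarily similar matrices $A\in\Gamma_1\setminus\Gamma_2$ and $B\in\Gamma_2\setminus\Gamma_1$ (with an explicit conjugating unitary in Appendix C). Since any proper invariant factor $Q$ of $P$ would satisfy $Q(A)=Q(B)$ while $Q_I$ is forced to be a nonconstant proper divisor of $P_1^2P_2$ — hence vanishes at exactly one of $A,B$ — one gets an immediate contradiction (applying the same reasoning to the cofactor $R$ handles the $Q_I = cP_1P_2$ case). This avoids any knowledge of a generating set for the invariant algebra and replaces a large symbolic computation by two polynomial evaluations and one conjugacy check. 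You should replace your enumeration step with an argument of this type, or else supply a proof that your chosen spanning set really spans the needed graded pieces.
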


\begin{proof}
The absolute irreducibility of $P_1$ can be proved by using the
``absfact.lib'' library in {\sc Singular} \cite{GPSO5}.
To make this computation easier,
it suffices to check that after setting $y=2$, $z=1$,
and setting the imaginary parts of $x$ and $v$ to 0 and 1, respectively,
the resulting polynomial (having 69 terms) still has degree 6
and is absolutely irreducible.

The same method works for $P_2$. In this case we set $u=1$, $y=2$, $z=1$,
and we also set the imaginary parts of $x$ and $v$ to 0 and 1, respectively.
We obtain an absolutely irreducible polynomial of degree 12
(having 47 terms).

It is much harder to prove that $P$ is also absolutely irreducible.
(We were not successful in using the same method as above.)
Assume that $P$ has a nontrivial factorization $P=QR$ over $\bC$.
We can assume that $Q$ is irreducible.
Since $\Un(3)$ is connected and $P$ is $\Un(3)$-invariant, both
$Q$ and $R$ must be invariant. By restricting these polynomials to
$L_I$ and by using (\ref{Faktor}), we obtain
that $P_I=Q_I R_I=P_1^2 P_2.$ It follows that $Q_I$
is equal to $P_1$ or $P_2$ (up to a scalar factor).
We are going to show that this leads to a contradiction.

One can easily verify that, for
$$ A= \left[ \begin {array}{ccc} 3+3\,i&5&0\\\noalign{\medskip}0&3-3\,i&5
\\\noalign{\medskip}5&0&-6\end {array} \right]\in L_I, $$
we have $P_1(A)=0$ and $P_2(A)=-22675690800$, and so
$A\in\Gamma_1\setminus\Gamma_2.$

On the other hand, the matrix
$$ B= \left[ \begin {array}{ccc} -1&\frac12\,\sqrt {222+6\,\sqrt {69}}&0
\\\noalign{\medskip}0&\frac12\,(1-\sqrt {69})&0\\\noalign{\medskip}\frac12\,
\sqrt {222-6\,\sqrt {69}}&0&\frac12(1+\sqrt {69})\end {array} \right]\in L_I $$
satisfies $P_1(B)=89424$ and $P_2(B)=0$, and so
$B\in\Gamma_2\setminus\Gamma_1.$

We obtain a contradiction by showing that $B\in\pO_A$.
An explicit unitary matrix $X$ satisfying $AX=XB$ is given in Appendix C.
\end{proof}

We note that, in the above proof, $A$ is a regular point of $\Gamma_1$,
while $B$ is singular on $\Gamma_2$.

The subspace $L_I$ is $Z$-invariant, for the cyclic matrix
$$ Z=\left[ \begin{array}{ccc} 0 & 0 & 1 \\ 1 & 0 & 0 \\ 0 & 1 & 0
\end{array} \right]. $$
Indeed, if $A\in L_I$ is given by (\ref{Mat-A}) then
$$ ZAZ^{-1}=\left[ \begin{array}{ccc} w & y & 0 \\ 0 & u & z \\ x & 0 & v
\end{array} \right]\in L_I. $$

Assume now that $A\in L_I$ and that $P(A)\ne0$. We claim that the two points
$A,ZAZ^{-1}\in\pX_A$ are not $T_3$-conjugate. Otherwise
$ZAZ^{-1}=DAD^{-1}$ for some $D\in T_3$. This implies that $u=v=w$, and
$u+v+w=0$ forces that $u=v=w=0$. Consequently, $P_1(A)=0$ which
contradicts our assumption. We conclude that the three points of the
$Z$-orbit $\{A,ZAZ^{-1},Z^{-1}AZ\}$ belong to three different
$T_3$-orbits in $\pX_A$. Since $P_1$ is $Z$-invariant, we have
$$ P_1(A)=P_1(ZAZ^{-1})=P_1(Z^{-1}AZ). $$
It follows that $N_A$ is divisible by 3. As $N_A=N(A,M_I(3))$
by the remark \ref{IntN} and as we know that in this case
$N(A,M_I(3))$ is even, we conclude that $N_A$ is divisible by 6.
Numerical computations indicate that $N_A$ is either 6 or 18
and that $\pX_A/T_3$ can be split into 6-tuples such that
$P_1$ is constant on the union of the $T_3$-orbits
belonging to the same 6-tuple. We have no explanation for this
phenomenon.

Let us look at an example. The matrices
$$ A= \left[ \begin{array}{ccc} 1+i & 0 & 0 \\ 0 & -1 & 0 \\
1 & 0 & -i \end{array} \right] \quad \text{and} \quad
B= \left[ \begin{array}{ccc} -i & 0 & 0 \\ 0 & -1 & 0 \\
1 & 0 & 1+i \end{array} \right] $$
belong to $L_I$. As $P_1(A)=P_1(B)=45$ and $P_2(A)=P_2(B)=0$,
we have $A,B\in\Gamma_2\setminus\Gamma_1.$
One can easily verify that $A$ and $B$ are unitarily similar and
are regular points of $\Gamma_2$.
Hence $\pX_A$ contains the six $T_3$-orbits with representatives
$Z^kAZ^{-k},Z^kBZ^{-k}$, $k=0,1,2$. We propose $A$ and $B$ as
examples of generic points which belong to $\Gamma$, which
would imply that $\Theta\ne\Gamma$.

\section{Appendix A: Generators of $\Un(1)\times\SU(3)$-invariants} \label{UxSU}

Consider the representation of the direct product $\Un(1)\times\SU(3)$
on $L(3)$, where $\Un(1)$ acts by multiplication with scalars of
unit modulus and $\SU(3)$ acts by conjugation.
The algebra of real polynomial invariants for this action is
a subalgebra of the corresponding algebra for the conjugation action
of $\Un(3)$ on $L(3)$.
A minimal set of homogeneous generators of the first algebra
consists of 16 invariants $i_1,i_2,\ldots,i_{16}$ listed below.
This fact is neither proved nor used in this paper.
However, these generators are used in Sections \ref{UnivObl} and
\ref{CikObl} to construct some other invariants that we need.
The 16 generators are
\begin{eqnarray*}
&& i_1=\tr(XY),\, i_2=\tr(X^2 Y^2),\, i_3=\frac14 | \tr(X^2) |^2, \\
&& i_4=\tr(XYX^2 Y^2),\, i_5=\frac19| \tr(X^3) |^2,\, i_6=| \tr(X^2 Y) |^2, \\
&& i_7=\frac16\, \Re \left( \tr(Y^2)( 3\tr^2(X^2 Y)+\tr(X^3)\tr(XY^2) ) \right), \\
&& i_8=\frac16\, \Re \left( \tr(Y^2)( 3\tr^2(X^2 Y)-\tr(X^3)\tr(XY^2) ) \right), \\
&& i_9=\frac12\, \Im \left( \tr(Y^2)\tr^2(X^2 Y) \right), \\
&& i_{10}=\frac16\, \Im \left( \tr(X^2)\tr^2(X^2 Y)\tr(Y^3) \right), \\
&& i_{11}=\frac1{12}\, \Re( \tr^2(X^2)\tr(Y^3)\tr(XY^2) ), \\
&& i_{12}=\frac1{12}\, \Im( \tr^2(X^2)\tr(Y^3)\tr(XY^2) ), \\
&& i_{13}=\frac1{72}\, \Re( \tr^2(X^3)\tr^3(Y^2) ), \\
&& i_{14}=\frac1{72}\, \Im( \tr^2(X^3)\tr^3(Y^2) ), \\
&& i_{15}=\frac13\, \Im \left( \tr^3(X^2 Y)\tr(Y^3) \right), \\
&& i_{16}=\frac1{144}\, \Re( \tr^4(X^2)\tr^2(Y^3)\tr^2(XY^2) ),
\end{eqnarray*}
where $X\in L(3)$ is an arbitrary matrix, $Y=X^*$ is its adjoint, and $\tr$
is the matrix trace function.
The symbols $\Re$ and $\Im$ stand for ``real part'' and ``imaginary part'',
respectively.

\section{Appendix B: The polynomial $P$ } \label{Polinom-P}

Here we construct the polynomial $P$ used in Section \ref{CikObl}.
It will be given as a polynomial in the invariants $i_k$ listed in Appendix A.
For convenience, we collect $P$ with respect to the invariants $i_3$ and $i_6$
\begin{equation} \label{Pol-P}
P = \sum_{k,l} p_{kl}i_3^k i_6^l.
\end{equation}
The nonzero $p_{kl}$ are as follows:
\begin{eqnarray*}
p_{00} &=&
-6\, \left( 126\,{i_{{2}}}^{2}{i_{{1}}}^{4}-26\,i_{{2}}{i_{{1}}}^{6}+
336\,i_{{7}}i_{{2}}{i_{{1}}}^{2}-270\,{i_{{2}}}^{3}{i_{{1}}}^{2}+1536
\,{i_{{7}}}^{2} \right. \\
&& \left. +216\,{i_{{2}}}^{4}-11\,i_{{7}}{i_{{1}}}^{4}+2\,{i_{{1}
}}^{8}-1152\,i_{{7}}{i_{{2}}}^{2} \right)  \left( i_{{8}}+i_{{7}}
 \right) \\
p_{01} &=&
18432\,i_{{11}}i_{{7}}+186\,i_{{8}}{i_{{1}}}^{5}+2016\,i_{{11}}{i_{{1}
}}^{2}i_{{2}}+2160\,{i_{{2}}}^{2}i_{{1}}i_{{7}}-66\,i_{{11}}{i_{{1}}}^{4} \\
&& +2160\,i_{{8}}i_{{1}}{i_{{2}}}^{2}-1278\,i_{{8}}{i_{{1}}}^{3}i_{{2}
}+186\,{i_{{1}}}^{5}i_{{7}}-6912\,i_{{11}}{i_{{2}}}^{2}+3456\,i_{{1}}{
i_{{7}}}^{2} \\
&& +3456\,i_{{8}}i_{{1}}i_{{7}}-1278\,{i_{{1}}}^{3}i_{{2}}i_{{7}} \\
p_{02} &=&
3888\,i_{{8}}i_{{2}}+297\,{i_{{1}}}^{2}{i_{{2}}}^{2}-4608\,i_{{13}}-
324\,{i_{{2}}}^{3}-90\,{i_{{1}}}^{4}i_{{2}}+3888\,i_{{2}}i_{{7}} \\
&& +9\,{i_{{1}}}^{6}-1242\,{i_{{1}}}^{2}i_{{7}}-1242\,i_{{8}}{i_{{1}}}^{2}-3456
\,i_{{11}}i_{{1}} \\
p_{03} &=& 18\,i_{{1}} \left( -7\,{i_{{1}}}^{2}+27\,i_{{2}} \right) \\
p_{04} &=& 729 \\
p_{10} &=& 6912\,i_{{7}}i_{{4}}i_{{1}}i_{{2}}+2016\,i_{{2}}{i_{{1}}}^{2}i_{{13}}-
33\,i_{{4}}i_{{2}}{i_{{1}}}^{5}+1008\,i_{{4}}{i_{{2}}}^{2}{i_{{1}}}^{3} \\
&& -3456\,i_{{4}}i_{{1}}{i_{{2}}}^{3}-1008\,{i_{{4}}}^{2}i_{{2}}{i_{{1}}
}^{2}+31104\,{i_{{5}}}^{2}{i_{{2}}}^{2}+33\,i_{{5}}{i_{{1}}}^{7} \\
&& +297\,{i_{{5}}}^{2}{i_{{1}}}^{4}+9792\,{i_{{2}}}^{3}i_{{8}}-2304\,{i_{{4}}}^
{2}i_{{8}}-251\,{i_{{1}}}^{6}i_{{8}}-20736\,{i_{{5}}}^{2}i_{{8}} \\
&& -3024\,i_{{5}}i_{{4}}i_{{2}}{i_{{1}}}^{2}+25920\,i_{{7}}{i_{{2}}}^{3}
-55296\,{i_{{7}}}^{2}i_{{2}}-6912\,i_{{7}}{i_{{4}}}^{2} \\
&& +17760\,{i_{{7}}}^{2}
{i_{{1}}}^{2}+3456\,{i_{{4}}}^{2}{i_{{2}}}^{2}+9504\,i_{{5}}{i_{{2}}}^
{2}{i_{{1}}}^{3}-6912\,i_{{7}}i_{{5}}{i_{{1}}}^{3} \\
&& -66\,{i_{{1}}}^{4}i_{{13}}+4608\,i_{{16}}-24012\,i_{{7}}{i_{{2}}}^{2}{i_{{1}}}^{2}
-317\,i_{{7}}{i_{{1}}}^{6}-62208\,i_{{7}}{i_{{5}}}^{2} \\
&& +2304\,i_{{4}}i_{{2}}i_{
{1}}i_{{8}}-1206\,i_{{5}}i_{{2}}{i_{{1}}}^{5}+3012\,i_{{2}}{i_{{1}}}^{
4}i_{{8}}+5226\,i_{{7}}i_{{2}}{i_{{1}}}^{4} \\
&& -20736\,i_{{7}}i_{{5}}i_{{4
}}+8544\,i_{{7}}{i_{{1}}}^{2}i_{{8}}-6912\,{i_{{2}}}^{2}i_{{13}}+9216
\,i_{{7}}i_{{13}}+66\,{i_{{1}}}^{5}i_{{11}} \\
&& -9072\,{i_{{5}}}^{2}i_{{2}}
{i_{{1}}}^{2}-2304\,i_{{5}}{i_{{1}}}^{3}i_{{8}}-27648\,i_{{7}}i_{{2}}i
_{{8}}-6912\,i_{{5}}i_{{4}}i_{{8}} \\
&& -11052\,{i_{{2}}}^{2}{i_{{1}}}^{2}i_
{{8}}+13824\,i_{{5}}i_{{2}}i_{{1}}i_{{8}}+10368\,i_{{5}}i_{{4}}{i_{{2}
}}^{2}+1632\,{i_{{2}}}^{2}{i_{{1}}}^{6} \\
&& -5151\,{i_{{2}}}^{3}{i_{{1}}}^{
4}+7200\,{i_{{2}}}^{4}{i_{{1}}}^{2}-256\,i_{{2}}{i_{{1}}}^{8}+16\,{i_{
{1}}}^{10}-1728\,{i_{{2}}}^{5} \\
&& -2016\,i_{{2}}{i_{{1}}}^{3}i_{{11}}+
41472\,i_{{7}}i_{{5}}i_{{1}}i_{{2}}-18432\,i_{{7}}i_{{1}}i_{{11}}+6912
\,{i_{{2}}}^{2}i_{{1}}i_{{11}} \\
&& -20736\,i_{{5}}i_{{1}}{i_{{2}}}^{3}+99\,
i_{{5}}i_{{4}}{i_{{1}}}^{4}+33\,{i_{{4}}}^{2}{i_{{1}}}^{4} \\
p_{11} &=&
-4320\,i_{{5}}i_{{2}}{i_{{1}}}^{2}+6\,{i_{{1}}}^{3}i_{{8}}+6912\,i_{{7
}}i_{{4}}+1728\,{i_{{4}}}^{2}i_{{1}}-3450\,i_{{7}}{i_{{1}}}^{3} \\
&& -5088\,{i_{{1}}}^{2}i_{{11}}+14688\,i_{{1}}{i_{{2}}}^{3}+2721\,i_{{2}}{i_{{1}
}}^{5}+2304\,i_{{4}}i_{{8}}-4320\,i_{{2}}i_{{1}}i_{{8}} \\
&& +15552\,{i_{{5}
}}^{2}i_{{1}}+1152\,i_{{1}}i_{{13}}+11520\,i_{{5}}i_{{8}}-3456\,i_{{4}
}{i_{{2}}}^{2}+1440\,i_{{7}}i_{{1}}i_{{2}} \\
&& -20736\,i_{{5}}{i_{{2}}}^{2}
-272\,{i_{{1}}}^{7}+1530\,i_{{5}}{i_{{1}}}^{4}-33\,i_{{4}}{i_{{1}}}^{4
}+32256\,i_{{2}}i_{{11}} \\
&& +5184\,i_{{5}}i_{{4}}i_{{1}}-9792\,{i_{{2}}}^{
2}{i_{{1}}}^{3}-720\,i_{{4}}i_{{2}}{i_{{1}}}^{2}+39168\,i_{{7}}i_{{5}} \\
p_{12} &=&
-1728\,i_{{4}}i_{{1}}+2130\,{i_{{1}}}^{4}-10170\,i_{{2}}{i_{{1}}}^{2}+
15228\,{i_{{2}}}^{2}+1296\,i_{{8}} \\
&& +1296\,i_{{7}}-10368\,i_{{5}}i_{{1}} \\
p_{13} &=& -3726\,i_{{1}}
\end{eqnarray*}
\begin{eqnarray*}
p_{20} &=&
-4272\,i_{{4}}i_{{2}}{i_{{1}}}^{3}+16128\,i_{{4}}i_{{1}}{i_{{2}}}^{2}-
16128\,{i_{{4}}}^{2}i_{{2}}+12816\,i_{{5}}i_{{4}}{i_{{1}}}^{2} \\
&& -48384\,i_{{5}}i_{{4}}i_{{2}}+4272\,{i_{{4}}}^{2}{i_{{1}}}^{2}-27360\,{i_{{2}}
}^{2}i_{{8}}+23808\,i_{{7}}i_{{8}} \\
&& +80844\,i_{{7}}i_{{2}}{i_{{1}}}^{2}+
69888\,{i_{{7}}}^{2}-47808\,i_{{5}}i_{{2}}{i_{{1}}}^{3}+117504\,i_{{5}
}i_{{1}}{i_{{2}}}^{2} \\
&& -39168\,i_{{7}}i_{{5}}i_{{1}}+19500\,i_{{2}}{i_{{
1}}}^{2}i_{{8}}-8544\,{i_{{1}}}^{2}i_{{13}}+32256\,i_{{2}}i_{{13}} \\
&& +8544\,{i_{{1}}}^{3}i_{{11}}+384\,{i_{{1}}}^{8}+20160\,{i_{{2}}}^{4}-
128736\,i_{{7}}{i_{{2}}}^{2}-11607\,i_{{7}}{i_{{1}}}^{4} \\
&& +4470\,i_{{5}}{i_{{1}}}^{5}-145152\,{i_{{5}}}^{2}i_{{2}}
+38448\,{i_{{5}}}^{2}{i_{{1}}}^{2}-2799\,{i_{{1}}}^{4}i_{{8}} \\
&& -11520\,i_{{5}}i_{{1}}i_{{8}}-50016\,{i_{{2}}}^{3}{i_{{1}}}^{2}
-5279\,i_{{2}}{i_{{1}}}^{6}+26283\,{i_{{2}}}^{2}{i_{{1}}}^{4} \\
&& -32256\,i_{{2}}i_{{1}}i_{{11}} \\
p_{21} &=&
-22224\,i_{{7}}i_{{1}}-4272\,i_{{4}}{i_{{1}}}^{2}-37632\,i_{{11}}+
16128\,i_{{4}}i_{{2}}+96768\,i_{{5}}i_{{2}} \\
&& -2112\,{i_{{1}}}^{5}+18192\,i_{{2}}{i_{{1}}}^{3}-15264\,i_{{5}}{i_{{1}}}^{2}
-8400\,i_{{1}}i_{{8}}-40608\,i_{{1}}{i_{{2}}}^{2} \\
p_{22} &=& -10476\,i_{{2}}+10401\,{i_{{1}}}^{2} \\
p_{30} &=& 44448\,i_{{5}}{i_{{1}}}^{3}+31296\,i_{{8}}i_{{2}}
+4415\,{i_{{1}}}^{6}-76308\,{i_{{1}}}^{2}i_{{7}}-18816\,i_{{4}}i_{{1}}i_{{2}} \\
&& +169344\,{i_{{5}}}^{2}+56448\,i_{{5}}i_{{4}}
+128496\,{i_{{1}}}^{2}{i_{{2}}}^{2}+37632\,i_{{11}}i_{{1}} \\
&& -209664\,i_{{5}}i_{{1}}i_{{2}}+18816\,{i_{{4}}}^{2}-9108\,i_{{8}}{i_{{1}}}^{2}
+236352\,i_{{2}}i_{{7}}  \\
&& -37632\,i_{{13}}-90624\,{i_{{2}}}^{3}-43032\,{i_{{1}}}^{4}i_{{2}} \\
p_{31} &=& -117504\,i_{{5}}+480\,i_{{2}}i_{{1}}+3312\,{i_{{1}}}^{3}
-18816\,i_{{4}} \\
p_{32} &=& -5196 \\
p_{40} &=& -146816\,i_{{2}}{i_{{1}}}^{2}-10384\,i_{{8}}
+112896\,i_{{5}}i_{{1}}+23948\,{i_{{1}}}^{4} \\
&& +195840\,{i_{{2}}}^{2}-142480\,i_{{7}} \\
p_{41} &=& 33632\,i_{{1}} \\
p_{50} &=& -202048\,i_{{2}}+65232\,{i_{{1}}}^{2} \\
p_{60} &=& 78400
\end{eqnarray*}

\section{Appendix C: The unitary matrix $X$}

The entries of the unitary matrix $X=[x_{ij}]$, used in the proof of
Proposition \ref{AbsIred}, are given by
\begin{eqnarray*}
x_{11} &=& \frac { 2 \left( \sqrt {69}-7 \right)
-i \left( 3+\sqrt {69} \right) }{ 6\, \sqrt {37-\sqrt {69}}}, \\
x_{12} &=& \frac {1}{12\,\sqrt {13}}(1+3i)(i\sqrt {6}-\sqrt {46}), \\
x_{13} &=& \frac {1}{12}\,(\sqrt {46}+\sqrt {6}), \\
x_{21} &=& \frac {(3-i)\sqrt {69} +34+27i}{15\,\sqrt {37-\sqrt {69}} }, \\
x_{22} &=& \frac {4}{15\,\sqrt {13}} ( \sqrt {46}-\sqrt {6} )
+\frac {i}{60\,\sqrt {13}}\,(23\,\sqrt {6}-3\,\sqrt {46}), \\
x_{23} &=& \frac {1}{60}\,(3-i)(3\,\sqrt {6}-i\,\sqrt {46}), \\
x_{31} &=& \frac {2(2-\sqrt{69})-i(3+\sqrt{69})} {6\,\sqrt{37-\sqrt{69}} }, \\
x_{32} &=& \frac {4}{15\,\sqrt{13}}\,(\sqrt {6}+\sqrt {46})
+\frac{i}{60\,\sqrt{13}}\,(23\,\sqrt {6}+3\,\sqrt {46}), \\
x_{33} &=& \frac{1}{12}(\sqrt {46}-\sqrt {6}).
\end{eqnarray*}

\end{document}